\documentclass{amsart}
\usepackage{amssymb, amsmath, amsthm, graphics, comment, xspace, enumerate}
\usepackage{hyperref}
\usepackage{color}
\newtheorem{thm}{Theorem}[section]

\newtheorem{prop}[thm]{Proposition}
\theoremstyle{definition}
\newtheorem{defn}[thm]{Definition}
\newtheorem{example}[thm]{Example}
\theoremstyle{remark}
\newtheorem{rem}[thm]{Remark}
\numberwithin{equation}{section}

\begin{document}
\title[Multidimensional vector-valued $Z$-transform...]{Multidimensional vector-valued $Z$-transform and its applications}

\author{Marko Kosti\' c}
\address{Faculty of Technical Sciences,
University of Novi Sad,
Trg D. Obradovi\' ca 6, 21125 Novi Sad, Serbia}
\email{marco.s@verat.net}

{\renewcommand{\thefootnote}{} \footnote{2020 {\it Mathematics
Subject Classification.} 39A05, 39A06, 44A55, 44A30, 47D99.
\\ \text{  }  \ \    {\it Key words and phrases.} Multidimensional vector-valued $Z$-transform, abstract partial difference equations, abstract Volterra difference equations with multiple variables, multidimensional discrete convolution products, abstract fractional partial difference equations.}}

\begin{abstract}
In this paper, we systematically
investigate the multidimensional $Z$-transform of functions with values in sequentially complete locally convex spaces over the field of complex numbers. We provide many structural characterizations, remarks and applications of established results to abstract Volterra difference equations depending on several variables. We also consider multidimensional discrete convolution products in vector-valued setting.
\end{abstract}
\maketitle

\section{Introduction and preliminaries}

Discrete fractional calculus, discrete fractional equations and discrete Volterra equations are rapidly growing fields of research (cf. the  monographs \cite{abbasi} by S. Abbas et al., \cite{fere} by R. A. C. Ferreira, \cite{dfc} by
C. Goodrich, A. C. Peterson, the research articles \cite{atici1}-\cite{atici3} by F. M. Atici and P. W. Eloe, and the research articles quoted in the recent monograph \cite{funkcionalne} by M. Kosti\' c). Discrete fractional calculus is incredibly important in studies of neural networks, complex dynamic systems, frequency response analysis, image processing and interval-valued systems, among many other fields of pure and applied science. Concerning the partial difference equations, we refer the reader to the monographs \cite{sscheng} by S. S. Cheng, the edited book \cite{petro} by E. N. Petropoulou, and the research monograph \cite{delaydiff} by B. Zhang, Y. Zhou.  

$Z$-Transform  (discrete Laplace transform) and its applications have been investigated by many authors (cf. the  monographs \cite{jury1}-\cite{jury} by  E. I. Jury, \cite{chip} by Y. Z. Tsypkin, \cite{grove} by A. C. Grove, Chapter 3 in the monograph \cite{4949} by J. G. Proakis, D. G. Manolakis as well as the research article \cite{bohnert} and  the list of references quoted therein). 
If $(X,\| \cdot \|)$ is a complex Banach space and a sequence $(f_{k})_{k\in {\mathbb N}_{0}}$ in $X$ satisfies $\limsup_{k\rightarrow +\infty}\| f_{k}\|^{1/k}<r<+\infty,$ then the function\index{$Z$-transform of sequences}
$$
F(z):=F\bigl\{f_{k}\bigr\}(z):=\sum_{k=0}^{\infty}\frac{f(k)}{z^{k}},\quad |z|>r
$$
is analytic and $F(\cdot)$ is called $Z$-transform of $(f_{k})_{k\in {\mathbb N}_{0}}.$ If $(B_{k})_{k\in {\mathbb N}_{0}}$ is a sequence in $L(X)$, where $L(X)$ denotes the Banach space of all bounded linear operators on $X$, and $\limsup_{k\rightarrow +\infty}\| B_{k}\|^{1/k}<r<+\infty,$ then we can also consider the $Z$-transform of $(B_{k})_{k\in {\mathbb N}_{0}},$ which is defined by
$$
\Phi(z):=\sum_{k=0}^{\infty}\frac{B(k)}{z^{k}},\quad |z|>r.
$$
Then a simple computation yields that the sequence $((B\ast_{0} f)(k))_{k\in {\mathbb N}_{0}}$ satisfies $\limsup_{k\rightarrow +\infty}\| (B\ast_{0} f)(k)\|^{1/k}<r<+\infty$, where $$(B\ast_{0} f)(k):=\sum_{j=0}^{k}B(k-j)f(j),\quad k\in {\mathbb N}_{0},$$ and that its $Z$-transform is given by $F(\cdot)\Phi(\cdot).$ 
Applying the Cauchy formula for the coefficients of Laurent series, we obtain the formula for the inverse $Z$-transform:
$$
f(k)=\frac{1}{2\pi i}\oint_{|z|=r_{0}}z^{k-1}F(z)\, dz,\quad k\in {\mathbb N}_{0},
$$
where $0<r<r_{0}<+\infty.$

There are many intriguing applications of $Z$-transform in the theory of difference equations. For example, applying the $Z$-transform and the formula
\begin{align}\label{shift1}
Z\{f_{k+j}\}(z)=z^{j}\Biggl[Z\{f_{k}\}(z)-\sum_{s=0}^{j-1}f_{s}z^{-s} \Biggr],\quad |z|>r,
\end{align} 
we can prove
that the unique
solution of the abstract higher-order difference equation
$$
\sum_{k=0}^{m}A_{m-k}u(k+j)=f_{j},\quad j\in {\mathbb N}_{0}\ \ ; \ \ u(k)=0,\quad 0\leq k\leq m-1
$$
can be represented by $u(m)=(G\ast_{0} f)(m),$ where
\begin{align}\label{giligili}
G(k)=\frac{1}{2\pi i}\oint_{|z|=r}z^{k-1}\Biggl(\sum_{s=0}^{m}z^{m-s}A_{s}\Biggr)^{-1}\, dz,
\end{align}
provided that $\limsup_{k\rightarrow +\infty}\| f_{k}\|^{1/k}<r<+\infty$ and the operator pencil $\sum_{s=0}^{m}z^{m-s}A_{s}$ is boundedly invertible for $|z|\geq r;$ see \cite[Theorem 13.2.1]{gil} and \cite{fcaa2025}.

We can also analyze the bilateral $Z$-transform of a sequence $(f_{k})_{k\in {\mathbb Z}}$ in $X$, which is given by
$$
F_{b}(z):=\sum_{k=-\infty}^{\infty}\frac{f(k)}{z^{k}},\quad |z|>r.
$$
The bilateral $Z$-transform is a linear transform which is compatible with the infinite convolution of sequences and has certain time shifting properties.  

In the  multidimensional  setting, the unilateral $Z$-transform of a sequence $(f (k))_{ k\in {\mathbb N}_{0}^{n}}$  is defined by
$$
{\displaystyle F\bigl(z_{1},z_{2},\ldots ,z_{n}\bigr):=\sum _{k_{1}=0}^{\infty }\cdots \sum _{k_{n}=0 }^{\infty }f\bigl(k_{1},k_{2},\ldots ,k_{n}\bigr)z_{1}^{-k_{1}}z_{2}^{-k_{2}}\ldots z_{n}^{-k_{n}}},
$$
while the bilateral $Z$-transform of a scalar-valued sequence $(f (k))_{ k\in {\mathbb Z}^{n}}$  is defined by
$$
{\displaystyle F\bigl(z_{1},z_{2},\ldots ,z_{n}\bigr):=\sum _{k_{1}=-\infty }^{\infty }\cdots \sum _{k_{n}=-\infty }^{\infty }f\bigl(k_{1},k_{2},\ldots ,k_{n}\bigr)z_{1}^{-k_{1}}z_{2}^{-k_{2}}\ldots z_{n}^{-k_{n}}}.
$$
The discrete multidimensional Fourier transform is a special case of the bilateral multidimensional $Z$-transform with
$
{\textstyle z_{j}=e^{iw_{j}}}$ for $1\leq j\leq n.$ 
Concerning the multidimensional $Z$-transform of scalar-valued functions and its applications to the partial difference equations, we refer the reader to the research articles \cite{gregor} by J. Gregor, \cite{tensor} by S. Yu Chang, H.-C. Wu, as well as \cite[Section 2.7]{jury} and the doctoral dissertation of P. Alper \cite{alper}. 
For example, the partial difference equation 
\begin{align}\label{sho}
f\bigl(k_{1}+1,k_{2}+1\bigr)-qf\bigl(k_{1},k_{2}+1\bigr)-pf\bigl(k_{1},k_{2}\bigr)=0;\quad u\bigl(k_{1},0\bigr)=q^{k_{1}},\ k_{1}\in {\mathbb N}_{0},
\end{align}
where $p,\ q\in (0,1)$ and $p+q=1,$ appears in the probability theory. The double  $Z$-transform of a sequence $(f (k))_{ k\in {\mathbb N}_{0}^{2}}$ satisfying \eqref{sho} has to be given by
$$
F\bigl(z_{1},z_{2}\bigr)=\frac{1}{1-p z_{2}^{-1}\bigl(z_{1}-q \bigr)^{-1}}\sum_{s=0}^{+\infty}\bigl(q/z_{1}\bigr)^{s}.
$$
Performing the inverse $Z$-transform in two dimensions, we get
$$
f\bigl(k_{1},k_{2}\bigr)=p^{k_{2}}q^{k_{1}-k_{2}}\binom{k_{1}}{k_{2}},\quad \bigl(k_{1},k_{2}\bigr)\in {\mathbb N}_{0}^{2};
$$
cf. \cite[pp. 74--76]{jury} for more details. Concerning some applications of multidimensional $Z$-transform in the theory of multidimensional digital signal processing, we refer the reader to the research monograph
\cite{dud} by D. E. Dudgeon and R. M. Mersereau; 
also, the multidimensional $Z$-transform can be useful in deriving new combinatorial
identities; see, e.g., \cite[Example 3.2.4]{gregor} and the research article \cite{zeil} by D. Zeilberger. 

The multidimensional $Z$-transform of sequences with values in Banach spaces (locally convex spaces) has not been well-explored in the existing literature. This fact has strongly influenced us to write this paper, which seems to be the first significant research study of multidimensional $Z$-transform of vector-valued sequences. We introduce and analyze here the multidimensional $Z$-transform of sequences with values in sequentially complete locally convex spaces over the field of complex numbers. We provide several illustrative examples and applications of established results to abstract partial difference equations and abstract Volterra difference equations depending on several variables. We also introduce a new discrete convolution product $a\ast_{{\rm D}}^{l,j}b$ and compute its multidimensional $Z$-transform.

The structure of paper can be briefly described as follows. After explaining the basic notation and terminology used in the paper, we briefly consider holomorphic functions of several complex variables and generalized Weyl $(a,m)$-fractional derivatives. The main structural properties of multidimensional $Z$-transform of vector-valued functions are given in Section \ref{multi-z}. The basic notion is introduced in Definition \ref{manule}; in Proposition \ref{linear} and Proposition \ref{linear1}, we consider the linearity of multidimensional vector-valued $Z$-transform and the separation of variables in the multidimensional vector-valued $Z$-transform, respectively. 
The modulation property is clarified in Proposition \ref{manulakis0} and the shifting property is clarified in Propostion \ref{manulakis}; in Theorem \ref{seka}, we establish the inversion formula for the multidimensional $Z$-transform of vector-valued functions. 

Section \ref{disc} is devoted to the study of discrete convolution products and their relations with the multidimensional $Z$-transform. The basic notion is introduced in Definition \ref{dcp}, where we define the notion of a discrete convolution product $(a \ast_{{\rm D}} b)(\cdot)$ of a sequence $a:D' \rightarrow {\mathbb C}$ and a sequence $b: D'' \rightarrow X$, where $\emptyset \neq D' \subseteq {\mathbb Z}^{n}$, $\emptyset \neq D'' \subseteq {\mathbb Z}^{n}$ and ${\rm D}=(D',D'');$ the domain of sequence $(a \ast_{{\rm D}} b)(\cdot)$ is $D:=D'+D''$. The discrete convolution product $(a \ast_{{\rm D}} b)(\cdot)$ is compatible with the multidimensional $Z$-transform of sequences, as shown in Theorem \ref{dcp}. 

Further on, in our research study \cite{mvlt}, we have introduced the convolution product $a\ast_{0}^{l,j}u$ of a function $a\in L_{loc}^{1}([0,+\infty)^{l})$ and a function $u\in L_{loc}^{1}([0,+\infty)^{n} : X)$, where
$0\leq l\leq n,$ $a_{l,n}:={n\choose l},$ $1\leq j\leq a_{l,n}$, $D_{l,j}=\{j_{1},...,j_{l}\}$ is a fixed subset of $\{1,...,n\}$, and $1\leq j_{1}<...<j_{l}\leq n.$ This definition goes as follows: we set $a_{l,j}\ast_{0}^{l,j}u:=u$, if $l=0 $ and $j=1;$ if $1\leq l\leq n$, then we set
\begin{align*}  &
\Bigl( a_{l,j}\ast_{0}^{l,j}u\Bigr)\bigl(t_{1},...,t_{n}\bigr):=\int^{t_{j_{1}}}_{0}...\int^{t_{j_{l}}}_{0}a_{l,j}\bigl( t_{j_{1}}-s_{j_{1}},..., t_{j_{l}}-s_{j_{l}}\bigr)
\\ & \times u\Bigl(t_{1},...,t_{j_{1}-1},s_{j_{1}},t_{j_{1}+1},...,t_{j_{2}-1},s_{j_{2}},t_{j_{2}+1},...,t_{j_{l}-1}, s_{j_{l}},t_{j_{l}+1},...,t_{n}\Bigr)\, ds_{j_{1}}...\, ds_{j_{l}},
\end{align*}
for any $t=(t_{1},...,t_{n}) \in [0,+\infty)^{n}.$ It is clear that this notion can be simply extended to the functions $a\in L_{loc}^{1}(\Omega')$ and a function $u\in L_{loc}^{1}(\Omega'' : X)$, where $\emptyset \neq \Omega'\subseteq [0,+\infty)^{l}$ is a Lebesgue measurable set and $\emptyset \neq \Omega''\subseteq [0,+\infty)^{n}$ is a Lebesgue measurable set. In Subsection \ref{ima}, we introduce and analyze the discrete analogue of the convolution product $a\ast_{0}^{l,j}u$ and provide an extension of the notion introduced in Definition \ref{dcp}, which can be obtained by plugging $l=n$ and $j=1$ in Definition \ref{dcplj} below. The compatibility of introduced discrete convolution product $(a \ast_{{\rm D}}^{l,j}b) (\cdot)$ with the multidimensional $Z$-transform of sequences is clarified in Theorem \ref{dcplj}. 

Some applications of multidimensional vector-valued $Z$-transform to abstract Volterra difference equations with multiple variables are presented in Section \ref{sire}, which is broken down into three separate subsections:
 
If $D$ is a finite subset of ${\mathbb Z}^{n}$ and $\emptyset \neq D' \subseteq {\mathbb Z}^{n}$, then the analysis of a linear partial difference equation
\begin{align}\label{note}
\sum_{j\in D}a_{j}u(k+j)=f(k),\quad k\in D',
\end{align}
where $f : D' \rightarrow {\mathbb C}$ and $a_{j}\in {\mathbb C}$ for all $j\in D,$ is far from being trivial and the theory of multidimensional $Z$-transform is not capable of bulding a fairly complete picture of the problem. For example, if a sequence $u : D'\rightarrow {\mathbb C}$ is a solution to \eqref{note} and the complex numbers $\lambda_{1}\in {\mathbb C} \setminus \{0\},...,\lambda_{n}\in {\mathbb C} \setminus \{0\}$ satisfy $\sum_{j\in D}a_{j}\lambda_{1}^{\beta_{1}}\cdot ... \cdot \lambda_{n}^{\beta_{n}}=0$, then the 
sequence $v : D' \rightarrow {\mathbb C}$, given by $v(k):=u(k)+\lambda_{1}^{k_{1}}\cdot ... \cdot \lambda_{n}^{k_{n}}, $ $k=(k_{1},...,k_{n})\in D'$,
is also a solution of \eqref{note}. A necessary and sufficient condition for equation \eqref{note} to have a unique solution for every sequence $f : D' \rightarrow {\mathbb C}$ is discussed in \cite[Theorem 3.2.2]{gregor}; cf. also \cite[Theorem 4.1.1a, Theorem 4.1.1]{gregor}, where J. Gregor has analyzed the existence and uniqueness of so-called recursively computable solutions to \eqref{note}. 

Further on, the characteristic polynomial of a difference equation \eqref{note}
is defined by
$
P(\lambda):=\sum_{j \in D}a_{j}\lambda^{j},\ \lambda \in {\mathbb C}^{n},
$
where $\lambda^{j}\equiv \lambda_{1}^{j_{1}}\cdot \lambda_{2}^{j_{2}}\cdot ... \cdot \lambda_{n}^{j_{n}}.$ Using the notion of amoeba of the characteristic polynomial of \eqref{note} and the notion of a multiple Laurent series, E. K. Leinartas has established
a description for the solution space of a multi-dimensional difference equation with constant
coefficients \eqref{note}, where $D$ is a finite subset of ${\mathbb N}_{0}^{n}$ and $D' ={\mathbb N}_{0}^{n}$; see \cite[Theorem 1, Theorem 2]{198}.

In Subsection \ref{subs2} we analyze the operator-valued extension of problem \eqref{note}:
\begin{align*}
\sum_{j\in D}A_{j}u(k+j)=f(k),\quad k\in D',
\end{align*}
where $X$ is a sequentially complete locally convex space over the field of complex numbers, $f : D' \rightarrow X$ and $A_{j}$ is a linear operator on $X$ for all $j\in D.$ Using the multidimensional vector-valued $Z$-transform, we present two noteworthy results concerning the existence and uniqueness of solutions to the above problem; cf. Theorem \ref{moves}, Theorem \ref{moves1}, as well as Example \ref{zan} and Example \ref{njas} below. Unfortunately, the results established in \cite{198} cannot be so simply transferred to the vector-valued setting.

Subsection \ref{subs1} investigates the following abstract multi-term Volterra difference equation on ${\mathbb Z}^{n}:$
\begin{align*}\notag & 
Bu(k)+\sum_{j_{1}=0}^{l_{1}}A_{1,j_{1}}\Bigl( a_{1,j_{1}} \ast_{(D_{1},{\mathbb Z}^{n})}u \Bigr)\bigl(k+k_{1,j_{1}}\bigr)
\\& +...+\sum_{j_{s}=0}^{l_{s}}A_{s,j_{s}}\Bigl( a_{s,j_{s}} \ast_{(D_{s},{\mathbb Z}^{n})} u\Bigr)\bigl(k+k_{s,j_{s}}\bigr)=f(k),\ k\in {\mathbb Z}^{n},
\end{align*}
where the following condition holds:
\begin{itemize}
\item[(C1)] $f: {\mathbb Z}^{n} \rightarrow X$, $s\in {\mathbb N},$ $\emptyset \neq D_{j}\subseteq {\mathbb Z}^{n}$ and $l_{j}\in {\mathbb N}_{0}$ ($1\leq j\leq s$), 
$B$ is a closed linear operator on $X$, $a_{j,k} : D_{j}\rightarrow {\mathbb C}$ and $A_{j,k}$ is a closed linear operator on $X$ ($1\leq j\leq s;$ $0\leq k\leq l_{j}$).
\end{itemize} 
The main results of Subsection \ref{subs1} are Theorem \ref{movesvolt} and Theorem \ref{iniqvolt}.

In our recent research article \cite{jan}, we have considered various classes of the abstract non-scalar Volterra difference equations. In particular, we have investigated some classes of the abstract multi-term fractional difference equations with Weyl fractional derivatives, like
\begin{align*}
\bigl( \Delta^{m} Bu\bigr)(v)&=A_{1}\Bigl(\Delta^{\alpha_{1}}_{W}u\Bigr)(v+v_{1})+...
\\& +A_{n}\Bigl(\Delta^{\alpha_{n}}_{W}u\Bigr)(v+v_{n})+\Delta^{m}(k \circ Cf)(v)+\Delta^{m}g(v),\ v\in {\mathbb Z}
\end{align*}
and
\begin{align*}
B\bigl( \Delta^{m_{n}} h\bigr)(v)&=\sum_{j=1}^{n-1}A_{j}\Bigl(\Delta^{m_{n}-m_{j}}\Delta^{\alpha_{j}}_{W}h\Bigr)(v+v_{j})
\\&+A_{n}\Bigl(\Delta^{\alpha_{n}}_{W}h\Bigr)(v+v_{n})+(k \circ Cf)(v)+g(v),\ v\in {\mathbb Z};
\end{align*}
the notion and notation will be explained a little bit later.
We have also connected the solutions of the abstract multi-term fractional differential equation
$$
A_{n}D_{t}^{\alpha_{n}}u(t)+A_{n-1}D_{t}^{\alpha_{n-1}}u(t)+...+A_{1}D_{t}^{\alpha_{1}}u(t)=0,\quad t>0,
$$
where $0\leq \alpha_{1}<\alpha_{2}<...<\alpha_{n}$ and $D_{t}^{\alpha}u(t)$ denotes the Riemann-Liouville fractional derivative of function $u(t)$ of order $\alpha>0,$ with the solutions of the abstract multi-term fractional difference equation
\begin{align*}
A_{n}\Bigl[\Delta^{\alpha_{n}}_{W}u\Bigr](v)&+A_{n-1}\Bigl[\Delta^{\alpha_{n-1}}_{W}u\Bigr]\bigl(v+m_{n}-m_{n-1}\bigr)
\\&+...+A_{1}\Bigl[\Delta^{\alpha_{1}}_{W}u\Bigr]\bigl(v+m_{n}-m_{1}\bigr)=-g(v),\quad v\in {\mathbb Z}.
\end{align*}

The results established in \cite{jan} and the previous research studies of abstract fractional difference equations with Weyl fractional derivaties are given by using the existence of a corresponding solution operator family of operators defined on the set of non-negative integers (see, e.g., \cite{abadias}-\cite{abadias1}, \cite[Theorem 4.1, Theorem 4.3, Theorem 4.5]{jan} and the references quoted in \cite{funkcionalne}) or by applying the Poisson transform to the solutions of  a corresponding abstract  fractional differential equation (for the first steps made in this direction, we refer the reader to the paper \cite{lizama-pois} by C. Lizama). We would like to emphasize that the study carried out in this research article is probably the first research study which considers some applications of vector-valued $Z$-transform to abstract fractional difference equations with Weyl fractional derivaties; for the sake of better readibility, we have decided to restrict our investigation of this topic to the one-dimensional setting:

In Subsection \ref{ljeb}, we investigate the existence and uniqueness of solutions to the following abstract multi-term fractional difference equation with generalized Weyl $(a,m)$-fractional derivatives:{\small
\begin{align*}
A_{s}\Bigl(\Delta_{W,a_{s},m_{s}}u\Bigr)\bigl(k+k_{s}\bigr)+...+A_{1}\Bigl(\Delta_{W,a_{1},m_{1}}u\Bigr)\bigl(k+k_{1}\bigr)+A_{0}u\bigl(k+k_{0}\bigr)=Cf(k),\ k\in {\mathbb Z},
\end{align*}}where $s\in {\mathbb N}$, $A_{1},...,A_{s}$ are closed linear operators on a sequentially complete locally convex space over the field of complex numbers $X$, $k_{0},k_{1},...,k_{s}\in {\mathbb Z},$ $ f: {\mathbb Z} \rightarrow X$ is a given sequence and $\Delta_{W,a,m}u$ is the generalized Weyl $(a,m)$-fractional derivative of sequence $u(\cdot).$ The main results of Subsection
\ref{ljeb} are Theorem \ref{movesvoltw} and Theorem \ref{iniqvoltw}; cf. also Example \ref{parkw}.
\vspace{0.1cm} 

\noindent {\bf Notation and preliminaries.} If $Y$ is a Hausdorff sequentially complete locally convex space\index{sequentially complete locally convex space!Hausdorff} over the field of complex numbers, then we say that $Y$ is an SCLCS.
If $X$ and $Y$ are SCLCSs,
then the abbreviation $\circledast$ ($\circledast_{Y}$) stands for the fundamental system of seminorms\index{system of seminorms} which defines the topology of $X$ ($Y$), $L(X,Y)$ denotes the space consisting of all continuous linear mappings\index{continuous linear mapping} from $X$ into $Y$ and $L(X)\equiv L(X,X)$; if $\mathcal B$ is the family of bounded subsets\index{bounded subset} of $X$, then the space $L(X,Y)$ carries the Hausdorff locally convex topology induced
by the calibration $(p_{B})_{B\in {\mathcal B}}$ of seminorms on $L(X,Y)$, where $p_B(T):=\sup_{x\in B}p(Tx)$, $p\in\circledast$, $B\in\mathcal B$, $T\in L(X,Y)$.
It is well known that the space $L(X,Y)$ is sequentially complete provided that $X$ is barreled. By $X^{\ast}$ we denote the space $L(X,{\mathbb C}),$ equipped with the strong topology; ${\rm I}$ satnds for the identity operator on $X$ (cf. \cite{jarc}, \cite{meise} and \cite{trev} for more details about topological vector spaces and locally convex spaces). For further information concerning the integration in SCLCSs, we refer the reader to \cite{FKP}. 

The Gamma function\index{function!Gamma} will be denoted by $\Gamma(\cdot)$ and the principal branch will be always used to take the powers; define $g_{\zeta}(t):=t^{\zeta-1}/\Gamma(\zeta)$ and
$0^{\zeta}:=0$ ($\zeta>0,$ $t>0$).  
Given the numbers $s\in\mathbb R$ and $m\in {\mathbb N},$ we set $\lceil s\rceil:=\inf\{l\in\mathbb Z:s\leq l\}$, ${\mathbb N}_{m}:=\{1,...,m\} $ and ${\mathbb N}_{m}^{0}:=\{0,1,...,m\} .$ Unless stated otherwise, we will always assume henceforth that $X$ is an SCLCS and $n\in {\mathbb N}.$ \vspace{0.1cm}

\noindent {\bf Holomorphic functions of several complex variables.} There are many research monographs concerning holomorphic functions of several complex variables; for more details about vector-valued holomorphic functions depending on several complex variables, we refer the reader to the list of references quoted in the recent research article \cite{kruse} by K. Kruse. Recall, a function $f\colon\Omega\to X$, where $\Omega$ is an open subset of ${\mathbb C}^{n}$, is said to be holomorphic if for each $\lambda=(\lambda_{1},...,\lambda_{n}) \in \Omega$ there exists $\epsilon>0$ such that $L(\lambda,\epsilon):=\{(z_{1},..,z_{n})\in {\mathbb C}^{n} : \max\{ |z_{i}-\lambda_{i}| : 1\leq i\leq n\}<\epsilon \}\subseteq \Omega$  
and
$$
f(z)=\sum_{k_{1}=0}^{+\infty}...\sum_{k_{n}=0}^{\infty}a_{k_{1},...,k_{n}}\bigl( z_{1}-\lambda_{1} \bigr)^{k_{1}}\cdot .... \cdot \bigl( z_{n}-\lambda_{n} \bigr)^{k_{n}},\quad z\in L(\lambda,\epsilon),
$$
for some elements $a_{k_{1},...,k_{n}}\in X$ satisfying that for each seminorm $p\in \circledast$ and $r\in [0,\epsilon)$ we have
$$
\sum_{(k_{1},...,k_{n})\in {\mathbb N}_{0}^{n}}p\bigl(a_{k_{1},...,k_{n}} \bigr) r^{k_{1}+...+k_{n}}<+\infty .
$$ 
We know that the mapping $\lambda\mapsto f(\lambda)$, $\lambda\in\Omega$ is holomorphic if and only if it is weakly holomorphic, i.e., the mapping $\lambda\mapsto\langle x^*,f(\lambda)\rangle$, $\lambda\in\Omega$ is holomorphic for every $x^*\in X^*$.
If the mapping $f\colon\Omega\to X$ is holomorphic, then the Cauchy integral formula in polydiscs holds (see, e.g., \cite[Theorem 5.1, Theorem 5.7]{kruse}), the mapping $\lambda\mapsto f(\lambda)$, $\lambda\in\Omega$ is infinitely differentiable and we have
$$
a_{k_{1},...,k_{n}}=\frac{\partial^{(k_{1},...,k_{n})}f(\lambda)}{k_{1}!\cdot ... \cdot k_{n}!},\quad (k_{1},...,k_{n})\in {\mathbb N}_{0}^{n}.
$$  
Moreover, the Osgood lemma, the Hartogs theorem and the Weierstrass theorem hold for the vector-valued holomorphic functions of several variables.\vspace{0.1cm}

\noindent {\bf Generalized Weyl $(a,m)$-fractional derivatives.} If $\alpha >0,$ then the Ces\`aro sequence $
(c^{\alpha}(k))_{k\in {\mathbb N}_{0}}$ is defined by
$$
c^{\alpha}(k):=\frac{\Gamma(k+\alpha)}{\Gamma(\alpha)k!},\quad k\in {\mathbb N}_{0}.
$$
We know that, for every $\alpha>0$ and $\beta>0$, we have $| c^{\alpha}(k)-g_{\alpha}(k)|=O(g_{\alpha}(k)|1/k|)$, $k\in {\mathbb N};$ in particular, $ c^{\alpha}(v)\sim g_{\alpha}(k),$ $k\rightarrow +\infty.$ We define $c^{0}(\cdot)$ by $c^{0}(0):=1$ and $c^{0}(v):=0,$ $v\in {\mathbb N},$ then we have
$
c^{\alpha}\ast_{0}c^{\beta}\equiv c^{\alpha+\beta},\ \alpha,\ \beta\geq 0.
$

Recall, if $(u_{k})$ is a one-dimensional sequence in $X$, then the Euler forward difference operator $\Delta$ is defined by $\Delta u_{k}:=u_{k+1}-u_{k}.$ The operator $\Delta^{m}$ is defined inductively, or by the formula
\begin{align*}
\Delta^{m}u_{k}:=\sum_{j=0}^{m}(-1)^{m-j}\binom{m}{j}u_{k+j}.
\end{align*}

Suppose now that $a: {\mathbb N}_{0}\rightarrow {\mathbb C}$ and $f: {\mathbb Z}\rightarrow X$ are given sequences. If the series $\sum_{s=0}^{+\infty}a(s)f(v-s)$ is absolutely convergent for all $k\in {\mathbb Z}$, then we define
\begin{align*} 
\Bigl(\Delta_{W,a}f\Bigr)(k):=\sum_{s=-\infty}^{k}a(k-s)f(s)=\sum_{s=0}^{+\infty}a(s)f(k-s),\quad k\in {\mathbb Z}.
\end{align*}
Assume that the sequence $\Delta_{W,a}f: {\mathbb Z}\rightarrow X$ is well-defined and $m\in {\mathbb N}.$ Then we put
$$
\Bigl(\Delta_{W,a,m}f\Bigr)(k):=\Bigl(\Delta^{m}\Delta_{W,a}f\Bigr)(k),\quad k\in {\mathbb Z}.
$$
It is worth noting that, if $m=\lceil \alpha \rceil$ and $a\equiv c^{m-\alpha}$ for some $\alpha>0,$ then the operator $\Delta_{a,m}$ reduces to the Weyl fractional derivative $D_{W}^{\alpha}f$ of sequence $f(\cdot)$ of order $\alpha;$ cf. \cite[Definition 2.3]{abadias}. Because of that,  we will call the sequence $\Delta_{W,a,m}f$ the generalized Weyl $(a,m)$-fractional derivative of sequence $f(\cdot).$
 
For the sake of brevity, we will not conisder here the applications of multidimenisonal vector-valued $Z$-transform to the abstract fractional partial difference equations; for more deatils about multidimensional discrete fractional calculus, we refer the reader to \cite{funkcionalne}, \cite{aksi} and the references quoted therein.

\section{Multidimensional $Z$-transform of vector-valued functions}\label{multi-z}

We open this section by introducing the following notion:

\begin{defn}\label{manule}
Suppose that $\emptyset \neq D\subseteq {\mathbb Z}^{n},$ $f: D \rightarrow X$ and $\emptyset \neq \Omega \subseteq {\mathbb C}^{n}$. If, for every seminorm $p\in \circledast$ and for every $z=(z_{1},z_{2},\ldots ,z_{n})\in \Omega ,$ we have
\begin{align}\label{est12}
\sum _{k\in D }p\Bigl(f\bigl(k\bigr)\Bigr)\bigl|z_{1}\bigr|^{-k_{1}}\bigl|z_{2}\bigr|^{-k_{2}}\ldots \bigl| z_{n}\bigr|^{-k_{n}}<+\infty ,
\end{align} 
then the multidimensional $Z$-transform of sequence $(f(k))_{k\in D}$ in the region $\Omega$
is given by
\begin{align}\label{est121}
F_{f}\bigl(z_{1},z_{2},\ldots ,z_{n}\bigr):=\sum _{k\in D }f\bigl(k\bigr)z_{1}^{-k_{1}}z_{2}^{-k_{2}}\ldots z_{n}^{-k_{n}},  
\end{align}
for any $z=(z_{1},z_{2},\ldots ,z_{n})\in \Omega ;$ here, $k=(k_{1},k_{2},\ldots ,k_{n})$ for $k\in D.$
\end{defn}

It is worth noting that the sum in \eqref{est121} exists in $X$ since we have assumed that $X$ is sequentially complete and \eqref{est12} holds. For example, if for every seminorm $p\in \circledast$ we have $\sum _{k\in D }p (f (k ) )<+\infty,$ then 
$F_{f}(z_{1},z_{2},\ldots ,z_{n})$ exists in the region $|z_{1}|\geq 1,...,|z_{n}|\geq 1.$
Further on, 
\eqref{est12} holds if and only if \eqref{est12} holds with the numbers $z_{1},...,z_{n}$ replaced therein with the numbers $z_{1}\exp(i\varphi_{1}),...,z_{n}\exp(i\varphi_{n}),$ where $ \varphi_{i}\in {\mathbb R}$ ($1\leq i\leq n$). If the set $\Omega$ is open, then the mapping $F_{f} : \Omega \rightarrow X$ is holomorphic since it is weakly holomorphic (cf. also \cite[Theorem 2.1.2]{gregor}); furthermore, in this case, we have
\begin{align*}&
\partial^{v}F_{f}\bigl(z_{1},z_{2},\ldots ,z_{n}\bigr)=\sum _{k_{1}\in D_{1} }\cdot ... \cdot \sum _{k_{n}\in D_{n} }\Biggl[\prod_{i=1}^{n}\bigl(-k_{i}\bigr) \cdot \bigl(-k_{i}-1\bigr) \cdot ... \cdot \bigl(-k_{i}-v_{i}+1\bigr)\Biggr] 
\\& \times f\bigl(k_{1},k_{2},\ldots ,k_{n}\bigr)z_{1}^{-k_{1}-v_{1}}z_{2}^{-k_{2}-v_{2}}\ldots z_{n}^{-k_{n}-v_{n}},
\end{align*}
for any $z=(z_{1},z_{2},\ldots ,z_{n})\in \Omega$ and $v=(v_{1},v_{2},\ldots ,v_{n})\in {\mathbb N}_{0}^{n}.$

As in the scalar-valued setting, if the multidimensional $Z$-transform of a sequence $(f(k))_{k\in D}$ exists at a point $(z_{1}^{0},...,z_{n}^{0})\in {\mathbb C}^{n},$ then the multidimensional $Z$-transform of a sequence $(f(k))_{k\in D}$ exists at any point $(z_{1},...,z_{n})\in {\mathbb C}^{n}$ such that $|z_{1}|\geq |z_{1}^{0}|,$ ..., $|z_{n}|\geq |z_{n}^{0}|.$ But, the region of absolute convergence of multidimensional $Z$-transform can be complex, as the following illustrative examples show:

\begin{example}\label{wasp}
\begin{itemize}
\item[(i)] If $n=2$, $a>0$, $f(k_{1},k_{2}):=a^{k_{1}}$ for $k_{1}=k_{2}\in {\mathbb N}_{0}$ and $f(k_{1},k_{2}):=0$ for $k_{1},\ k_{2}\in {\mathbb N}_{0}$ with $k_{1}\neq k_{2}$, then 
\begin{align*}
F_{f}(z)=\frac{1}{1-a^{-1}z_{1}^{-1}z_{2}^{-1}},\  \mbox{ provided }\ \bigl(z_{1},z_{2}\bigr)\in {\mathbb C}^{2}\ \mbox{ and }\ \bigl| z_{1}z_{2}\bigr|>a.
\end{align*}
\item[(ii)] If $D:=\{(k_{1},k_{2}) \in {\mathbb Z}^{2}: k_{1}\geq 0,\ k_{2}\geq -k_{1}\}$ and $f(k_{1},k_{2}):=1$ for all $(k_{1},k_{2})\in D,$ then
\begin{align*}
F_{f}(z)&=\frac{z_{2}^{-1}}{(1-z_{2}^{-1})(z_{2}^{-1}-z_{1}^{-1})},\\&  \mbox{ provided }\ \bigl(z_{1},z_{2}\bigr)\in {\mathbb C}^{2},\ \bigl| z_{1}\bigr|>1,\ \bigl| z_{2}\bigr|>1\ \mbox{ and }\ \bigl| z_{1}\bigr|>\bigl| z_{2}\bigr|.
\end{align*}
\end{itemize}
\end{example}

For further information concerning the regions of absolute convergence of multidimensional $Z$-transform and the notion of Reinhardt domains, we refer the reader to \cite[pp. 175--180]{dud} and \cite[Subsection 2.2]{gregor}. 

The linearity of multidimensional $Z$-transform can be simply shown: 

\begin{prop}\label{linear}
Suppose that $\alpha,\ \beta \in {\mathbb C}$, $f: D \rightarrow X$, $g: D \rightarrow X$ and for each seminorm $p\in \circledast$ there exists a real constant $M_{p}>0$ such that the estimate \eqref{est12} holds for any $z=(z_{1},z_{2},\ldots ,z_{n})\in \Omega ,$ and both sequences $f(\cdot),$ $g(\cdot)$. Then we have 
$$
F_{\alpha f+ \beta g} (z  )=\alpha F_{f} (z  )+\beta F_{g} (z ),\quad z\in \Omega.
$$
\end{prop}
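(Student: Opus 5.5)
The plan is to show directly from Definition \ref{manule} that $F_{\alpha f+\beta g}$ exists on $\Omega$, and then to identify it with $\alpha F_f+\beta F_g$ by linearity of limits in $X$.

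\emph{Existence.} Fix $z\in\Omega$ and a seminorm $p\in\circledast$. For every $k\in D$ the triangle inequality and the homogeneity of $p$ give
\begin{align*}
p\bigl(\alpha f(k)+\beta g(k)\bigr)\,|z_{1}|^{-k_{1}}\cdots|z_{n}|^{-k_{n}}\leq \Bigl(|\alpha|\,p\bigl(f(k)\bigr)+|\beta|\,p\bigl(g(k)\bigr)\Bigr)|z_{1}|^{-k_{1}}\cdots|z_{n}|^{-k_{n}}.
\end{align*}
Summing over $k\in D$, the right-hand side is finite because \eqref{est12} holds for both $f$ and $g$. Hence \eqref{est12} holds for $\alpha f+\beta g$, and the sum \eqref{est121} defining $F_{\alpha f+\beta g}(z)$ exists in $X$.

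\emph{Identity.} Since $D$ is countable, I would interpret the sum \eqref{est121} as the limit of the net of finite partial sums over finite subsets $E\subseteq D$, directed by inclusion. Absolute summability in every seminorm makes this net Cauchy, and its limit does not depend on any enumeration. For each finite $E$, ordinary linearity gives
\[
\sum_{k\in E}\bigl(\alpha f(k)+\beta g(k)\bigr)z^{-k}=\alpha\sum_{k\in E}f(k)z^{-k}+\beta\sum_{k\in E}g(k)z^{-k},
\]
where $z^{-k}:=z_{1}^{-k_{1}}\cdots z_{n}^{-k_{n}}$. Passing to the limit along the net uses two facts: addition and scalar multiplication are continuous in the locally convex space $X$, and limits are unique because $X$ is Hausdorff. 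This yields $F_{\alpha f+\beta g}(z)=\alpha F_f(z)+\beta F_g(z)$.

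The only point needing care is that the sum \eqref{est121} is well defined independently of how $D$ is ordered. This follows from the absolute convergence in each seminorm, since unconditional convergence is a consequence of absolute convergence in a sequentially complete locally convex space. One can also simply fix a single enumeration of $D$ and use it for all three series. The constants $M_p$ in the statement play no role in the argument.
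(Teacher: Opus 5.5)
Your proposal is correct and is the routine verification the paper has in mind when it says linearity ``can be simply shown'' (no written proof is given): the triangle inequality transfers \eqref{est12} to $\alpha f+\beta g$, and then linearity of finite partial sums, continuity of the vector operations and Hausdorffness of $X$ give $F_{\alpha f+\beta g}=\alpha F_{f}+\beta F_{g}$ on $\Omega$. You are also right that the constants $M_{p}$ play no role. A Cauchy net need not converge in a merely sequentially complete space, so the safest route is your fallback of fixing one enumeration of the countable set $D$ for all three series; your remark on unconditional convergence for countable sums covers the net version as well.
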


The principle of inclusion and exclusion for $m$ sets, where $m\in {\mathbb N} \setminus \{1\},$ can be simply formulated in terms of multidimensional vector-valued $Z$-transform; cf. also \cite[Theorem 2.1.4]{gregor} for case $m=2,$ where the established property was called the additivity of multidimensional $Z$-transform.

Suppose now that $r_{i}>0$, $D_{i}={\mathbb N}_{0}$ or $D_{i}=-{\mathbb N}_{0}$ ($1\leq i\leq n$), $D=D_{1}\times ... \times D_{n}$, for each seminorm $p\in \circledast$ there exists a real constant $M_{p}>0$ such that 
\begin{align}\label{est1}
p\Bigl(f\bigl(k_{1},...,k_{n}\bigr)\Bigr)\leq M_{p}\Bigl[r_{1}^{k_{1}}\cdot ... \cdot r_{n}^{k_{n}}\Bigr],\quad k=\bigl(k_{1},...,k_{n}\bigr)\in D, 
\end{align}
and $\Omega :=\Omega_{1} \times ... \times \Omega_{n}$ , where $\Omega_{i}:=\{z_{i}\in {\mathbb C} : |z_{i}|>r_{i}\},$ if $D_{i}={\mathbb N}_{0},$ and $\Omega_{i}:=\{z_{i}\in {\mathbb C} : |z_{i}|<r_{i}\},$ if $D_{i}={\mathbb N}_{0}$.
Then the integral in \eqref{est121} is convergent since it is absolutely convergent and $X$ is sequentially complete; in actual fact, if $p\in \circledast$ and $z=(z_{1},z_{2},\ldots ,z_{n})\in \Omega $, then we have{\small
\begin{align*} 
p\Bigl( F_{f}\bigl(z_{1},z_{2}, & \ldots ,z_{n}\bigr)\Bigr) \leq \sum _{k_{1}\in D_{1} }\cdot ... \cdot \sum _{k_{n}\in D_{n} }p\Bigl(f\bigl(k_{1},k_{2},\ldots ,k_{n}\bigr)\Bigr)\bigl|z_{1}\bigr|^{-k_{1}}\bigl|z_{2}\bigr|^{-k_{2}}\ldots \bigl|z_{n}\bigr|^{-k_{n}}
\\& \leq M_{p}\sum _{k_{1}\in D_{1} }\cdot ... \cdot \sum _{k_{n}\in D_{n} }r_{1}^{k_{1}}\cdot ... \cdot r_{n}^{k_{n}}\bigl|z_{1}\bigr|^{-k_{1}}\bigl|z_{2}\bigr|^{-k_{2}}\ldots \bigl|z_{n}\bigr|^{-k_{n}}
\\& = M_{p}\sum _{k_{1}\in D_{1} }\bigl( r_{1}/|z_{1}|\bigr)^{k_{1}} \cdot ... \cdot \sum _{k_{n}\in D_{n} }\bigl( r_{n}/|z_{n}|\bigr)^{k_{1}}<+\infty.
\end{align*}}

If there exists $i\in {\mathbb N}_{n}$ such that $D_{i}={\mathbb Z}$, and $D=D_{1}\times ... \times D_{n}$, then the estimate \eqref{est1} is far from being sufficient for ensuring the convergence of sum in \eqref{est121} for any point $z\in {\mathbb C}^{n}\setminus \{0\}.$ But, in this case, the bilateral multidimensional $Z$-transform $F_{f}(z)$ can be defined for every point $z\in {\mathbb C}^{n}\setminus \{0\}$; for example, if $D={\mathbb Z}^{n}  $ and $f(k_{1},...,k_{n})=\exp(-k_{1}^{2}-...-k_{n}^{2}),$ $k=(k_{1},...,k_{n})\in {\mathbb Z}^{n},$ then $F_{f}(z)$ is defined for every point $z\in {\mathbb C}^{n}\setminus \{0\}.$ 

We can simply prove the following result:

\begin{prop}\label{linear1}
\begin{itemize}
\item[(i)] Suppose that $\emptyset \neq \Omega \subseteq {\mathbb C}^{n}$, $f_{i}: D_{i} \rightarrow {\mathbb C}$ ($1\leq i\leq n-1$), $f_{n}: D_{n} \rightarrow X$, $D=D_{1}\times ... \times D_{n}$ and $f(k_{1},...,k_{n}):=f_{1}(k_{1}) \cdot ... \cdot f_{n-1}(k_{n-1})\cdot f_{n}(k_{n}),$ $k=(k_{1},...,k_{n})\in D$. If, for every $z=(z_{1},z_{2},\ldots ,z_{n})\in \Omega $, we have
$\sum_{k_{i}\in D_{i}}| f_{i}(z_{i})| \cdot |  z_{i}|^{-k_{i}}<+\infty$  ($1\leq i\leq n-1$), and
for each seminorm $p\in \circledast$ 
we have $\sum_{k_{n}\in D_{n}}p(f_{n}(z_{n}))|z_{n}|^{-k_{n}}<+\infty$, then 
\begin{align}\label{estb1}
F_{f}\bigl(z_{1},....,z_{n}\bigr)=\prod_{i=1}^{n}F_{f_{i}}\bigl(z_{i}\bigr) ,\quad z=\bigl(z_{1},....,z_{n}\bigr)\in \Omega.
\end{align}
\item[(ii)]
Suppose that $r_{i}>0$ and $D_{i}={\mathbb N}_{0}$ or $D_{i}=-{\mathbb N}_{0}$ ($1\leq i\leq n$), $f_{i}: D_{i} \rightarrow {\mathbb C}$ ($1\leq i\leq n-1$), $f_{n}: D_{n} \rightarrow X$, $D=D_{1}\times ... \times D_{n}$ and $f(k_{1},...,k_{n}):=f_{1}(k_{1}) \cdot ... \cdot f_{n-1}(k_{n-1})\cdot f_{n}(k_{n}),$ $k=(k_{1},...,k_{n})\in D$. If there exist real numbers $m_{i}>0$ such that $|f_{i}(k_{i})|\leq m_{i}r_{i}^{k_{i}}$ ($1\leq i\leq n-1$), and for each seminorm $p\in \circledast$ there exists a real constant $M_{p}>0$ such that $p(f(k_{n}))\leq M_{p}r_{n}^{k_{n}},$ $k_{n}\in D_{n}$, then \eqref{estb1} holds with $\Omega :=\Omega_{1} \times ... \times \Omega_{n}$, where $\Omega_{i}:=\{z_{i}\in {\mathbb C} : |z_{i}|>r_{i}\},$ if $D_{i}={\mathbb N}_{0},$ and $\Omega_{i}:=\{z_{i}\in {\mathbb C} : |z_{i}|<r_{i}\},$ if $D_{i}={\mathbb N}_{0}$.
\end{itemize}
\end{prop}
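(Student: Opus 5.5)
The plan is to reduce everything to absolute convergence and a Fubini-type rearrangement of an unconditionally summable family in the SCLCS $X$. We read the hypotheses with the obvious corrections: $|f_i(k_i)|$ and $p(f_n(k_n))$ in place of $|f_i(z_i)|$ and $p(f_n(z_n))$, and in (ii) the second alternative is $D_i=-{\mathbb N}_0$ with $\Omega_i=\{|z_i|<r_i\}$.

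\emph{Step 1 (existence of $F_f$ on $\Omega$).} Fix $z\in\Omega$ and $p\in\circledast$. For $k\in D$, the product structure gives
$$p\bigl(f(k)\bigr)\prod_{i=1}^n|z_i|^{-k_i}=\Biggl[\prod_{i=1}^{n-1}|f_i(k_i)|\,|z_i|^{-k_i}\Biggr]p\bigl(f_n(k_n)\bigr)|z_n|^{-k_n}.$$
A sum over $D=D_1\times\dots\times D_n$ of nonnegative terms of product form equals the product of the one-dimensional sums (Tonelli for counting measure). Hence the left side of \eqref{est12} equals $\prod_{i=1}^{n-1}\sum_{k_i\in D_i}|f_i(k_i)||z_i|^{-k_i}\cdot\sum_{k_n\in D_n}p(f_n(k_n))|z_n|^{-k_n}<+\infty$. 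So \eqref{est12} holds on $\Omega$, and by Definition \ref{manule} the transform $F_f(z)$ exists. The same computation shows that each $F_{f_i}(z_i)$ exists.

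\emph{Step 2 (identity \eqref{estb1}).} Since the family $(f(k)z^{-k})_{k\in D}$ is absolutely summable with respect to every seminorm, its sum is independent of ordering and grouping. This follows because finite partial sums over an exhausting sequence of finite sets form a Cauchy sequence for each $p$, and sequential completeness gives a limit that does not depend on the exhaustion. We take the exhaustion by boxes $B_N=\prod_i(D_i\cap[-N,N])$. On $B_N$ the finite sum factors exactly as
$$\prod_{i=1}^{n-1}S_i^N(z_i)\cdot S_n^N(z_n),$$
where $S_i^N$ are the scalar partial sums and $S_n^N$ is the $X$-valued partial sum. As $N\to\infty$, $S_i^N\to F_{f_i}(z_i)$ in ${\mathbb C}$ and $S_n^N\to F_{f_n}(z_n)$ in $X$. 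Passing to the limit in this product of scalars with a vector requires a routine estimate $p(\alpha_Nx_N-\alpha x)\le|\alpha_N|p(x_N-x)+|\alpha_N-\alpha|p(x)$ with bounded $\alpha_N$. This yields \eqref{estb1}. We expect this justification of the rearrangement in a merely sequentially complete space to be the only point needing care. Alternatively, we could apply functionals $x^*\in X^*$ and use the scalar result together with Hausdorffness, since $X^*$ separates points.

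\emph{Step 3 (part (ii)).} We deduce (ii) from (i) by checking the summability hypotheses on $\Omega$. For $D_i={\mathbb N}_0$ and $|z_i|>r_i$ we have $\sum_{k\ge0}|f_i(k)||z_i|^{-k}\le m_i\sum_{k\ge0}(r_i/|z_i|)^k<\infty$. For $D_i=-{\mathbb N}_0$ and $|z_i|<r_i$, writing $k=-j$, we get $\sum_{j\ge0}m_i(|z_i|/r_i)^j<\infty$. The $X$-valued factor is handled identically using $p(f_n(k_n))\le M_pr_n^{k_n}$, exactly as in the geometric-series computation preceding the proposition. Part (i) then applies on $\Omega=\Omega_1\times\dots\times\Omega_n$.
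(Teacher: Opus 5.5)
Your proposal is correct, and you read the typos the right way: $|f_i(k_i)|$ and $p(f_n(k_n))$ in the hypotheses, and $D_i=-{\mathbb N}_0$ with $\Omega_i=\{|z_i|<r_i\}$ in the second alternative of (ii). The paper omits the proof as simple; the intended argument is the same direct factorization of the absolutely convergent multiple sum into a product of one-dimensional sums that appears in the geometric-series computation after \eqref{est1}, and you carry it out with an additional, valid justification of the rearrangement in a sequentially complete space.
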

 
The modulation property of multidimensional $Z$-transform reads as follows; the proof is trivial and therefore omitted:
 
\begin{prop}\label{manulakis0}
Suppose that $f: D \rightarrow X$, $w: D \rightarrow X$, $\emptyset \neq \Omega \subseteq {\mathbb C}^{n}$, $a_{i}\in {\mathbb C} \setminus \{0\}$ ($1\leq i\leq n$) and $f(k_{1},...,k_{n})=a_{1}^{k_{1}}\cdot ... \cdot a_{n}^{k_{n}}\cdot w(k_{1},...,k_{n})$ for all $k=(k_{1},...,k_{n})\in D.$ If the multidimensional $Z$-transform of sequence $(w(k))_{k\in D}$ is well-defined in the region $\Omega_{w}:=\{ (z_{1}/a_{1},...,z_{n}/a_{n}) : (z_{1} ,...,z_{n} )\in \Omega \}$, then the multidimensional $Z$-transform of sequence $(f(k))_{k\in D}$ is well-defined in the region $\Omega$, and we have
\begin{align*}
F_{f}\bigl(z_{1},z_{2},\ldots ,z_{n}\bigr)= F_{w}\bigl(z_{1}/a_{1},z_{2}/a_{2},\ldots ,z_{n}/a_{n}\bigr),\quad z=\bigl(z_{1},z_{2},\ldots ,z_{n}\bigr)\in \Omega.
\end{align*}
\end{prop}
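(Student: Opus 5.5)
The proof is a direct termwise computation based on Definition \ref{manule}; the only work is to check the absolute convergence condition \eqref{est12} for $f(\cdot)$ in the region $\Omega$.

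First fix $z=(z_{1},\ldots ,z_{n})\in \Omega$ and put $w_{z}:=(z_{1}/a_{1},\ldots ,z_{n}/a_{n})\in \Omega_{w}$. This point is well defined because $a_{i}\neq 0$, and it belongs to $\Omega_{w}$ by the definition of $\Omega_{w}$. For each seminorm $p\in \circledast$ and each $k\in D$ we use absolute homogeneity of $p$ to write
\begin{align*}
p\bigl(f(k)\bigr)\prod_{i=1}^{n}\bigl|z_{i}\bigr|^{-k_{i}}=\prod_{i=1}^{n}\bigl|a_{i}\bigr|^{k_{i}}\,p\bigl(w(k)\bigr)\prod_{i=1}^{n}\bigl|z_{i}\bigr|^{-k_{i}}=p\bigl(w(k)\bigr)\prod_{i=1}^{n}\bigl|z_{i}/a_{i}\bigr|^{-k_{i}}.
\end{align*}
Summing over $k\in D$ turns the series \eqref{est12} for $f(\cdot)$ at $z$ into the series \eqref{est12} for $w(\cdot)$ at $w_{z}$. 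The latter is finite by hypothesis, so the $Z$-transform of $f(\cdot)$ is well defined on $\Omega$.

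Next we compare the vector-valued sums. Both series converge absolutely in the SCLCS $X$. Their general terms coincide, because
$$f(k)\prod_{i}z_{i}^{-k_{i}}=w(k)\prod_{i}(z_{i}/a_{i})^{-k_{i}},$$
which follows from $a_{i}^{k_{i}}z_{i}^{-k_{i}}=(z_{i}/a_{i})^{-k_{i}}$ for integer $k_{i}$; this holds for negative $k_{i}$ as well, since only integer powers appear. Hence the two sums are equal, which gives $F_{f}(z)=F_{w}(w_{z})$.

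The only point requiring a little care is the meaning of an unordered sum over the possibly infinite set $D\subseteq {\mathbb Z}^{n}$ in $X$. We would treat it as the limit of the net of finite partial sums, which exists by absolute convergence and sequential completeness; this is exactly the sense used in Definition \ref{manule}. Equality of termwise-identical absolutely summable families is then immediate, and no further obstacle arises.
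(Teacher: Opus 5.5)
Your proof is correct and is exactly the direct termwise verification the paper has in mind; the paper omits the proof as trivial. The identity $p(f(k))\prod_{i}|z_{i}|^{-k_{i}}=p(w(k))\prod_{i}|z_{i}/a_{i}|^{-k_{i}}$ transfers condition \eqref{est12} from $\Omega_{w}$ to $\Omega$, and the termwise equality $f(k)\prod_{i}z_{i}^{-k_{i}}=w(k)\prod_{i}(z_{i}/a_{i})^{-k_{i}}$ then gives the formula; your remark on unordered sums is also fine, since $D\subseteq {\mathbb Z}^{n}$ is countable and sequential completeness therefore suffices.
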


The multidimensional $Z$-transform has some expected shifting properties. In order to properly formulate the next result, which extends the formula \eqref{shift1}, the formula \cite[(2.173), p. 74]{jury} for case $n=2$ and $D_{1}=D_{2}={\mathbb N}_{0}$, and the formula \cite[(4.33b), p. 180]{dud} for case $n=2$ and $D_{1}=D_{2}={\mathbb Z}$, we set 
$z^{a}:=z_{1}^{a_{1}}\cdot z_{2} ^{a_{2}}\ldots z_{n}^{a_{n}}$ for any $z=(z_{1},...,z_{n})\in {\mathbb C}^{n}$ and $a=(a_{1},...,a_{n}) \in {\mathbb Z}^{n}$ such that the above term is well-defined ($0^{0}:=1,$ $0^{k}=0,$ $k\in {\mathbb N}$ and $0^{-k}$ is not well-defined for $k\in {\mathbb N}$):

\begin{prop}\label{manulakis}
Suppose that $f: D \rightarrow X$, $\emptyset \neq \Omega \subseteq {\mathbb C}^{n}$, $a\in {\mathbb Z}^{n}$, $a+D\subseteq D$ and, for every seminorm $p\in \circledast$ and for every $z=(z_{1},z_{2},\ldots ,z_{n})\in \Omega ,$ we have \eqref{est12}. Set $g(k_{1},...,k_{n}):=f(a_{1}+k_{1},...,a_{n}+k_{n}),$ $(k_{1},...,k_{n})\in D.$
Then the multidimensional $Z$-transform of sequence $(g(k))_{k\in D}$ in the region $\Omega$
is given by
\begin{align}\label{est121shift}
F_{g}\bigl(z_{1},z_{2},\ldots ,z_{n}\bigr)=z_{1}^{a_{1}}\cdot z_{2} ^{a_{2}}\ldots z_{n}^{a_{n}}\Biggl[ F_{f}\bigl(z_{1},z_{2},\ldots ,z_{n}\bigr)-\sum_{z\in D\setminus (a+D)}f(k)z^{-k}\Biggr],
\end{align}
for any $z=(z_{1},z_{2},\ldots ,z_{n})\in \Omega.$
\end{prop}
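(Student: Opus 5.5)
The plan is to prove \eqref{est121shift} by re-indexing the defining series of $F_{g}$ directly, after first checking that $F_{g}$ is well-defined on $\Omega$ in the sense of Definition \ref{manule}. I read the sum on the right-hand side of \eqref{est121shift} as running over $k\in D\setminus (a+D)$; the index $z$ printed there is evidently a typo for $k$. I also assume throughout that $z^{a}$ and $z^{-k}$ ($k\in D$) are well-defined for $z\in \Omega$, i.e., no coordinate $z_{i}$ vanishes where a negative power would be needed. This is implicit in \eqref{est12}, since the factors $|z_{i}|^{-k_{i}}$ appear there. Throughout, $a+D=\{a+k : k\in D\}$.

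\emph{Step 1: re-indexing.} The map $k\mapsto m:=a+k$ is a bijection from $D$ onto $a+D$, and $a+D\subseteq D$ by hypothesis. Moreover $z^{-k}=z^{a}z^{-m}$ whenever $m=a+k$.

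\emph{Step 2: absolute convergence.} Fix $p\in \circledast$ and $z\in \Omega$. By Step 1,
$$
\sum_{k\in D}p\bigl(f(a+k)\bigr)\bigl|z^{-k}\bigr|=\bigl|z^{a}\bigr|\sum_{m\in a+D}p\bigl(f(m)\bigr)\bigl|z^{-m}\bigr|\leq \bigl|z^{a}\bigr|\sum_{m\in D}p\bigl(f(m)\bigr)\bigl|z^{-m}\bigr|<+\infty ,
$$
using \eqref{est12} for $f$. Hence \eqref{est12} holds for $g$, and $F_{g}(z)$ exists in $X$ because $X$ is sequentially complete.

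\emph{Step 3: the formula.} I will use the fact that a family in an SCLCS which is absolutely summable with respect to every seminorm is unconditionally summable. Its sum is therefore independent of the enumeration of the index set, and it is additive under a splitting of the index set into two disjoint parts. Applying this to the family $(f(m)z^{-m})_{m\in D}$ and the disjoint decomposition $D=(a+D)\cup (D\setminus (a+D))$ gives
$$
F_{f}(z)=\sum_{m\in a+D}f(m)z^{-m}+\sum_{k\in D\setminus (a+D)}f(k)z^{-k}.
$$
Step 1 then gives
$$
F_{g}(z)=\sum_{k\in D}f(a+k)z^{-k}=z^{a}\sum_{m\in a+D}f(m)z^{-m}.
$$
Combining the two displays yields \eqref{est121shift}.

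The only point needing care is the justification of rearranging and splitting vector-valued series indexed by the (possibly infinite, unordered) set $D$. I would handle it through the seminorm estimates: the partial sums over any exhausting sequence of finite subsets are Cauchy for every $p\in \circledast$, and the differences between two exhaustions tend to zero in every $p$. Sequential completeness of $X$ then gives existence of the sums and independence from the chosen exhaustion.
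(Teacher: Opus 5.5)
Your proposal is correct and is essentially the paper's proof: the same re-indexing $m=a+k$, the same seminorm estimate bounding the sum over $a+D$ by the full sum over $D$, and the same splitting of $D$ into $a+D$ and $D\setminus(a+D)$. Your appeal to unconditional summability tightens what the paper leaves tacit (it writes iterated sums over $D_{1},\dots,D_{n}$ although $D$ need not be a product set); note also that well-definedness of $z^{a}$ is a genuine standing assumption, not a consequence of \eqref{est12} (e.g.\ $n=1$, $D=-{\mathbb N}_{0}$, $a=-1$, $z=0$).
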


\begin{proof}
Is is clear that, for every seminorm $p\in \circledast$ and for every $z=(z_{1},z_{2},\ldots ,z_{n})\in \Omega ,$ we have:
\begin{align*}
& \sum_{k\in D}p(f(k+a))  \cdot \bigl|z_{1}\bigr|^{-k_{1}}\bigl|z_{2}\bigr|^{-k_{2}}\ldots \bigl| z_{n}\bigr|^{-k_{n}}
\\& = \bigl|z_{1}\bigr|^{a_{1}}\bigl|z_{2}\bigr|^{a_{2}}\ldots \bigl| z_{n}\bigr|^{a_{n}} \sum_{k\in D}p(f(k+a)) \cdot \bigl|z_{1}\bigr|^{-(k_{1}+a_{1})}\bigl|z_{2}\bigr|^{-(k_{2}+a_{2})}\ldots \bigl| z_{n}\bigr|^{-(k_{n}+a_{n})}
\\& =\bigl|z_{1}\bigr|^{a_{1}}\bigl|z_{2}\bigr|^{a_{2}}\ldots \bigl| z_{n}\bigr|^{a_{n}} \sum_{w\in a+D}p(f(w))\bigl|z_{1}\bigr|^{-w_{1}}\bigl|z_{2}\bigr|^{-w_{2}}\ldots \bigl| z_{n}\bigr|^{-w_{n}}
\\& \leq \bigl|z_{1}\bigr|^{a_{1}}\bigl|z_{2}\bigr|^{a_{2}}\ldots \bigl| z_{n}\bigr|^{a_{n}} \sum_{w\in D}p(f(w))\bigl|z_{1}\bigr|^{-w_{1}}\bigl|z_{2}\bigr|^{-w_{2}}\ldots \bigl| z_{n}\bigr|^{-w_{n}}<+\infty.
\end{align*}
Then the final conclusion simply follows from the next calculus:{\small
\begin{align*}
& F_{g}\bigl(z_{1},z_{2},\ldots ,z_{n}\bigr)=\sum _{k_{1}\in D_{1} }\cdot ... \cdot \sum _{k_{n}\in D_{n} }g\bigl(k_{1},k_{2},\ldots ,k_{n}\bigr)z_{1}^{-k_{1}}z_{2}^{-k_{2}}\ldots z_{n}^{-k_{n}}
\\& =\sum _{k_{1}\in D_{1} }\cdot ... \cdot \sum _{k_{n}\in D_{n} }f\bigl(a_{1}+k_{1},a_{2}+k_{2},\ldots ,a_{n}+k_{n}\bigr)z_{1}^{-k_{1}}z_{2}^{-k_{2}}\ldots z_{n}^{-k_{n}}
\\& =  z_{1}^{a_{1}}\cdot z_{2} ^{a_{2}}\ldots z_{n}^{a_{n}}\sum _{k_{1}\in D_{1} }\cdot ... \cdot \sum _{k_{n}\in D_{n} }f\bigl(a_{1}+k_{1},a_{2}+k_{2},\ldots ,a_{n}+k_{n}\bigr)
\\& \times z_{1}^{-(a_{1}+k_{1})}z_{2}^{-(a_{2}+k_{2})}\ldots z_{n}^{-(a_{n}+k_{n})}
\\& =z^{a}\sum_{k\in D}f(a+k)z^{-(k+a)}=z^{a}\sum_{w\in a+D}f(w)z^{-w}
\\& =z^{a}\Biggl[ F_{f}(z)-\sum_{w\in D\setminus (a+D)}f(w)z^{-w}\Biggr]
\\& =z_{1}^{a_{1}}\cdot z_{2} ^{a_{2}}\ldots z_{n}^{a_{n}}\Biggl[ F_{f}\bigl(z_{1},z_{2},\ldots ,z_{n}\bigr)-\sum_{z\in D\setminus (a+D)}f(k)z^{-k}\Biggr],
\end{align*}}
wich holds for any $z=(z_{1},z_{2},\ldots ,z_{n})\in \Omega.$
\end{proof}

Now we will clarify the following inversion type theorem for the multidimensional $Z$-transform of vector-valued functions:

\begin{thm}\label{seka}
Suppose that $\emptyset \neq \Omega \subseteq {\mathbb C}^{n}$, $r_{i}>0$ for $1\leq i\leq n$ and $\Omega$ contains an open neighborhood of the set $\{z_{1}\in {\mathbb C} : |z_{1}|=r_{1}\} \times ... \times \{z_{n}\in {\mathbb C} : |z_{n}|=r_{n}\}$ in ${\mathbb C}^{n}.$ If the multidimensional $Z$-transform of sequence $(f(k))_{k\in D}$ is well-defined in the region $\Omega$, then we have
\begin{align}\label{est121inv}
f(k)=\frac{1}{(2\pi i)^{n}}\oint_{|z_{1}|=r_{1}}\cdot ... \cdot \oint_{|z_{n}|=r_{n}}z_{1}^{k_{1}-1}\cdot ... \cdot z_{n}^{k_{n}-1}
F_{f}\bigl(z_{1},z_{2},\ldots ,z_{n}\bigr)\, dz_{1}\, ...\, dz_{n},
\end{align}
for any $k=(k_{1},...,k_{n})\in D.$
\end{thm}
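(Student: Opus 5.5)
The plan is to reduce the statement to a termwise integration of the absolutely convergent series \eqref{est121}, using only the orthogonality of monomials on the torus. Put ${\mathbb T}_{r}:=\{|z_{1}|=r_{1}\}\times \cdots \times \{|z_{n}|=r_{n}\}$. Since $\Omega$ contains a neighborhood of ${\mathbb T}_{r}$, in particular ${\mathbb T}_{r}\subseteq \Omega$. Hence \eqref{est12} holds at every point $z\in {\mathbb T}_{r}$, and in fact at the single point $(r_{1},\ldots ,r_{n})$. Because $|z_{i}|=r_{i}$ on ${\mathbb T}_{r}$, we get, for each $p\in \circledast$,
$$
\sup_{z\in {\mathbb T}_{r}}\sum_{k\in D}p\bigl(f(k)z^{-k}\bigr)=\sum_{k\in D}p\bigl(f(k)\bigr)r_{1}^{-k_{1}}\cdots r_{n}^{-k_{n}}<+\infty .
$$
So the series defining $F_{f}$ converges uniformly on ${\mathbb T}_{r}$ with respect to every seminorm. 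This uniform control is the only use of the hypothesis on $\Omega$, and it makes continuity of $F_{f}$ on the torus automatic.

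Next I will make sense of the iterated contour integral as an integral over ${\mathbb T}_{r}$. I parametrize $z_{j}=r_{j}e^{i\theta_{j}}$, so the right-hand side of \eqref{est121inv} becomes
$$
\frac{r^{k}}{(2\pi)^{n}}\int_{[0,2\pi]^{n}}e^{i(k\cdot\theta)}F_{f}\bigl(r_{1}e^{i\theta_{1}},\ldots ,r_{n}e^{i\theta_{n}}\bigr)\,d\theta .
$$
The integrand is a continuous $X$-valued function on a compact set: each partial sum is continuous and the convergence is uniform. Therefore the integral exists in the SCLCS $X$, in the Riemann sense (cf. \cite{FKP}), and the iterated integrals coincide with it.

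The key step is interchanging summation and integration. I will do this by testing against a functional $x^{\ast}\in X^{\ast}$. Continuity of $x^{\ast}$ gives $|x^{\ast}(y)|\leq c\,p(y)$ for some $p\in \circledast$, so the scalar series $\sum_{k'}\langle x^{\ast},f(k')\rangle z^{-k'}$ converges absolutely and uniformly on ${\mathbb T}_{r}$. Termwise integration is then legitimate by dominated convergence. Orthogonality gives $\frac{1}{(2\pi)^{n}}\int e^{i(k-k')\cdot\theta}\,d\theta=\delta_{k,k'}$, and the factors $r^{k}r^{-k'}$ cancel exactly when $k'=k$. Hence the scalar formula yields $\langle x^{\ast},f(k)\rangle$ for the integral. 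Since $X$ is Hausdorff locally convex, $X^{\ast}$ separates points by Hahn--Banach, which gives \eqref{est121inv}. Alternatively, I can pass the $X$-valued integral directly through the uniformly convergent series, since $p(\int g)\leq \int p(g)$.

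I expect the main obstacle to be only the bookkeeping of the vector-valued integral. One must check that it exists and commutes with continuous functionals, and that the iterated integrals equal the integral over the torus. Continuity on a compact set together with sequential completeness handles existence. Commuting with $x^{\ast}$ follows by approximating with Riemann sums. After that, the argument is the standard one-dimensional Cauchy coefficient computation applied coordinatewise.
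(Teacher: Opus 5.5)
Your proposal is correct and follows the same route as the paper: test against functionals in $X^{\ast}$ to reduce to the scalar case, integrate the series for $F_{f}$ termwise over the torus, and use the orthogonality (residue) relation $\frac{1}{(2\pi i)^{n}}\oint\cdots\oint z_{1}^{k_{1}-1-l_{1}}\cdots z_{n}^{k_{n}-1-l_{n}}\,dz_{1}\cdots dz_{n}=\delta_{k,l}$, so that only the term $l=k$ survives. Your justification of the sum--integral interchange is cleaner than the paper's bare appeal to Fubini's theorem. You observe that \eqref{est12} at the single point $(r_{1},\ldots,r_{n})$ already gives uniform convergence on $\{|z_{1}|=r_{1}\}\times\cdots\times\{|z_{n}|=r_{n}\}$, which also shows that only the inclusion of this torus in $\Omega$ (not an open neighborhood) is actually needed.
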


\begin{proof}
Using the appropriate functionals, we may assume without loss of generality that the sequence $f(\cdot)$ is scalar-valued. Then we have
\begin{align}& \notag
\frac{1}{(2\pi i)^{n}}\oint_{|z_{1}|=r_{1}}\cdot ... \cdot \oint_{|z_{n}|=r_{n}}z_{1}^{k_{1}-1}\cdot ... \cdot z_{n}^{k_{n}-1}
F_{f}\bigl(z_{1},z_{2},\ldots ,z_{n}\bigr)\, dz_{1}\, ...\, dz_{n}
\\\notag& =\frac{1}{(2\pi i)^{n}}\oint_{|z_{1}|=r_{1}}\cdot ... \cdot \oint_{|z_{n}|=r_{n}}z_{1}^{k_{1}-1}\cdot ... \cdot z_{n}^{k_{n}-1}\sum_{l\in D}f(l)z_{1}^{-l_{1}}\cdot ... \cdot z_{n}^{l_{n}}\, dz_{1}\, ...\, dz_{n}
\\\notag& =\frac{1}{(2\pi i)^{n}}\oint_{|z_{1}|=r_{1}}\cdot ... \cdot \oint_{|z_{n}|=r_{n}}\sum_{l\in D}f(l)z_{1}^{k_{1}-1-l_{1}}\cdot ... \cdot z_{n}^{k_{n}-1-l_{n}}\, dz_{1}\, ...\, dz_{n}
\\\label{jazz}& =\frac{1}{(2\pi i)^{n}}\sum_{l\in D}f(l)\oint_{|z_{1}|=r_{1}}\cdot ... \cdot \oint_{|z_{n}|=r_{n}}z_{1}^{k_{1}-1-l_{1}}\cdot ... \cdot z_{n}^{k_{n}-1-l_{n}}\, dz_{1}\, ...\, dz_{n}
\\\notag & =f(k),\quad k=(k_{1},...,k_{n})\in D,
\end{align}
where the last equality follows from the Fubini theorem and the residue theorem (the equality \eqref{jazz} can be simply justified since the sum in \eqref{jazz} is finite and consists of only one term).
\end{proof}

For example, if $a$ and $b$ are complex numbers such that $|a|+|b|<1,$ then the inverse $Z$-transform of function
$$
F_{f}\bigl(z_{1},z_{2}\bigr)=\frac{z_{1}z_{2}}{z_{1}z_{2}-az_{2}-bz_{1}},\quad \bigl| z_{1}\bigr|>1-\epsilon,\ \bigl| z_{2}\bigr|>1-\epsilon,
$$
 where the number $\epsilon>0$ is sufficiently small, is given by
$$
f\bigl(k_{1},k_{2}\bigr)=a^{k_{1}}b^{k_{2}}\frac{(k_{1}+k_{2})!}{k_{1}! \cdot k_{2}!}u\bigl(k_{1}\bigr)u\bigl(k_{2}\bigr),\quad k=\bigl(k_{1},k_{2}\bigr)\in {\mathbb N}_{0}^{2},
$$
where $u(l)=1,$ $l\in {\mathbb N}$ and $u(0)=0;$
cf. \cite[pp. 186--187]{dud} for more details.

The reflection properties of multidimensional $Z$-transform can be also simply clarified; cf. \cite[p. 181]{dud}. Many other properties of multidimensional $Z$-transform of scalar-valued sequences, like the
multiplication property, the initial value theorem and the property formulated in \cite[Theorem 2.1.6]{gregor}, can be simply extended for the vector-valued sequences; also, it seems very plausible that the Parseval equality can be extended for the sequences with values in complex Hilbert spaces; cf. \cite[p. 182]{dud} and \cite[Lemma 6.8.1]{gil} for more details.

\section{Discrete convolution products and multidimensional $Z$-transform}\label{disc}

In our previous research studies (cf. \cite{funkcionalne} for more details), we have used the discrete convolution product of sequences $(a_{k})_{k\in {\mathbb N}_{0}^{n}}$ and $(b_{k})_{k\in {\mathbb N}_{0}^{n}}$, which is defined by  
$$
(a\ast_{0}b)(k):=\sum_{l\in {\mathbb N}_{0}^{n};l\leq k}a(k-l)b(l),\ k\in {\mathbb N}_{0}^{n};
$$
here, if $a=(a_{1},...,a_{n})\in {\mathbb R}^{n}$ and $b=(b_{1},...,b_{n})\in {\mathbb R}^{n}$, then we write $a\leq b$ if $a_{i}\leq b_{i}$ for $1\leq i\leq n.$
It can be simply proved that the convolution product $\ast_{0}$, which is sometimes also called the discrete convolution product of Faltung, is commutative and associative. 
Furthermore, if the sequences $(a_{k})_{k\in {\mathbb N}_{0}^{n}}$ and $(b_{k})_{k\in {\mathbb Z}^{n}}$ are given, then we have used the Weyl convolution product $(a\circ b)(\cdot)$, which is defined by
$$
(a\circ b)(k):=\sum_{l\in {\mathbb Z}^{n};l\leq k}a(k-l)b(l),\quad k\in {\mathbb Z}^{n}.
$$
Under certain assumptions, the following equalities hold true:
\begin{align}\label{trsic}
\bigl( a\ast_{0}b\bigr) \circ c =b\circ (a \circ c)=a \circ (b \circ c);
\end{align}
cf. \cite[Theorem 3.12(ii)-(iii)]{keyantuo} and the equation \eqref{trsic1} below.

Now we would like to introduce the following extension of above-mentioned convolution products and the usual convolution product $\ast$ of sequences $(a_{k})_{k\in {\mathbb Z}^{n}}$ and $(b_{k})_{k\in {\mathbb Z}^{n}}$, which is defined by  
$$
(a\ast b)(k):=\sum_{l\in {\mathbb Z}^{n}}a(k-l)b(l),\ k\in {\mathbb Z}^{n};
$$
the introduced notion also extends the notion considered in \cite[Definition 3.1.1]{gregor} to the vector-valued setting:

\begin{defn}\label{dcp}
Suppose that $a:D' \rightarrow {\mathbb C}$ and $b: D'' \rightarrow X$, where $\emptyset \neq D' \subseteq {\mathbb Z}^{n}$ and $\emptyset \neq D'' \subseteq {\mathbb Z}^{n}$. Define $D :=D'+D''$, ${\rm D}:=(D',D'')$ and
\begin{align}\label{pr}
\bigl(a \ast_{{\rm D}} b\bigr)(k):=\sum_{l\in D'';\, k-l\in D'}a(k-l)b(l) ,\quad k\in D,
\end{align}
provided that for each seminorm  $p\in \circledast$ and $k\in D$ we have $$\sum_{l\in D'';\, k-l\in D'}|a(k-l)| \cdot p(b(l))<+\infty.$$
\end{defn}

In actual fact, we have $a\ast_{0}b=a \ast_{{\rm D}} b$ with ${\rm D}=({\mathbb N}_{0}^{n},{\mathbb N}_{0}^{n})$, $a\circ b=a \ast_{{\rm D}} b$ with ${\rm D}=({\mathbb N}_{0}^{n},{\mathbb N}_{0}^{n})$ and $a\ast b=a \ast_{{\rm D}} b$ with ${\rm D}=({\mathbb Z}^{n},{\mathbb Z}^{n}).$
Since
$$
\sum_{l\in D'';\, k-l\in D'}a(k-l)b(l)=\sum_{l\in D';\, k-l\in D''}a(l)b(k-l),\quad k\in D,
$$
the discrete convolution product $\ast_{{\rm D}}$ of scalar-valued sequences $a:D' \rightarrow {\mathbb C}$ and $b: D'' \rightarrow {\mathbb C}$ is commutative; more precisely, we have $a \ast_{(D',D'')} b=b\ast_{(D'',D')}a.$ We can also simply thansfer the identites established in the equalities \cite[(3.1.3)]{gregor} to the vector-valued setting.

The introduced discrete convolution product $\ast_{{\rm D}}$ is also associative since a relatively simple computation yields the following generalization of formula \eqref{trsic}:
\begin{align}\label{trsic1}
\Bigl( a\ast_{(D',D'')} b\Bigr) \ast_{(D'+D'',D''')}c=a\ast_{(D',D''+D''')} \Bigl( b\ast_{(D'',D''')}c \Bigr),
\end{align}
which holds
under the assumption that all convolution products in \eqref{trsic1} are well-defined. For certain classes of sequences, the formula \eqref{trsic1} can be deduced by using the following convolution theorem for multidimensional vector-valued $Z$-transform; this result extends the statement of \cite[Theorem 3.1.4]{gregor} to the vector-valued setting:

\begin{thm}\label{dsp}
Suppose that $\emptyset \neq D' \subseteq {\mathbb Z}^{n}$, $\emptyset \neq D'' \subseteq {\mathbb Z}^{n},$ $a:D' \rightarrow {\mathbb C}$, $b: D'' \rightarrow X$, $D =D'+D''$, ${\rm D}=(D',D'')$, $\emptyset \neq \Omega \subseteq {\mathbb C}^{n},$
\begin{align}\label{pr1} 
\sum_{s\in D'}|a(s)| \cdot \bigl|z_{1}\bigr|^{-s_{1}}\bigl|z_{2}\bigr|^{-s_{2}}\ldots \bigl| z_{n}\bigr|^{-s_{n}}<+\infty,\quad z\in \Omega ,
\end{align} 
and
\begin{align}\label{pr2} \sum_{l\in D''}p(b(l)) \cdot
\bigl|z_{1}\bigr|^{-l_{1}}\bigl|z_{2}\bigr|^{-l_{2}}\ldots \bigl| z_{n}\bigr|^{-l_{n}}<+\infty ,\quad z\in \Omega.
\end{align}
Then the convolution product $(a\ast_{{\rm D}} b)(\cdot)$, given by \eqref{pr}, is well-defined, the multidimensional $Z$-transform of sequence $((a\ast_{{\rm D}} b)(k))_{k\in D}$ in the region $\Omega$ is well-defined, and we have 
\begin{align}\label{psd1}
F_{a\ast_{{\rm D}} b}(z)=F_{a}(z)F_{b}(z), \quad z\in \Omega.
\end{align} 
\end{thm}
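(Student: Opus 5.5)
The plan is to prove absolute convergence of the double series $\sum_{s\in D'}\sum_{l\in D''}a(s)b(l)z^{-s-l}$ first, seminorm by seminorm, and then regroup it in two ways. Fix $z\in\Omega$ and a seminorm $p\in\circledast$. By \eqref{pr1} and \eqref{pr2}, Tonelli's theorem for nonnegative double series gives
\begin{align*}
\sum_{s\in D'}\sum_{l\in D''}|a(s)|\,p(b(l))\,|z_{1}|^{-s_{1}-l_{1}}\cdots|z_{n}|^{-s_{n}-l_{n}}=\Biggl(\sum_{s\in D'}|a(s)|\,|z^{-s}|\Biggr)\Biggl(\sum_{l\in D''}p(b(l))\,|z^{-l}|\Biggr)<+\infty .
\end{align*}
Here $|z^{-s}|$ is shorthand for $|z_{1}|^{-s_{1}}\cdots|z_{n}|^{-s_{n}}$. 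This single finite quantity drives the whole argument.

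Next, partition the index set $D'\times D''$ according to the value $k=s+l\in D=D'+D''$. Each fibre $\{(k-l,l): l\in D'',\ k-l\in D'\}$ is nonempty, and the nonnegative sum above equals $\sum_{k\in D}|z^{-k}|\sum_{l\in D'';\,k-l\in D'}|a(k-l)|\,p(b(l))$. Every $z\in\Omega$ has all coordinates nonzero, since otherwise \eqref{pr1} would be meaningless. Hence $|z^{-k}|>0$ for each $k$, and finiteness forces $\sum_{l\in D'';\,k-l\in D'}|a(k-l)|\,p(b(l))<+\infty$ for every $k\in D$ and every $p$. By Definition \ref{dcp}, this is exactly the well-definedness of $(a\ast_{{\rm D}}b)(k)$; the sum exists because $X$ is sequentially complete. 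Moreover $p((a\ast_{{\rm D}}b)(k))\le\sum_{l}|a(k-l)|p(b(l))$, so $\sum_{k\in D}p((a\ast_{{\rm D}}b)(k))|z^{-k}|$ is bounded by the finite product above. This is \eqref{est12}, so $F_{a\ast_{{\rm D}}b}$ is well-defined on $\Omega$ in the sense of Definition \ref{manule}.

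Finally, to obtain \eqref{psd1}, I would reduce to scalars as in the proof of Theorem \ref{seka}: apply an arbitrary $x^{\ast}\in X^{\ast}$. Continuity and linearity of $x^{\ast}$ allow it to pass through all the absolutely convergent sums. For the scalar family $a(s)\langle x^{\ast},b(l)\rangle z^{-s-l}$, which is absolutely summable because $|\langle x^{\ast},\cdot\rangle|$ is dominated by a multiple of some $p\in\circledast$, the summation may be rearranged arbitrarily. Summing over the fibres $s+l=k$ gives $\langle x^{\ast},F_{a\ast_{{\rm D}}b}(z)\rangle$, while summing in the product order gives $F_{a}(z)\langle x^{\ast},F_{b}(z)\rangle$. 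Since $X$ is Hausdorff locally convex, Hahn--Banach shows that $X^{\ast}$ separates points, which yields \eqref{psd1}.

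The main obstacle is only bookkeeping: justifying the rearrangement of an unordered absolutely convergent series indexed by the countable set $D'\times D''$. In the vector-valued case this is cleanest to handle through functionals, as above, rather than by working directly with nets of partial sums in $X$.
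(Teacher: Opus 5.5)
Your proposal is correct and follows the paper's proof. In both, you dominate by the nonnegative double series, which factors into the product of the sums in \eqref{pr1} and \eqref{pr2}, and you read off the finiteness of each fibre sum together with the estimate \eqref{est12} for $a\ast_{{\rm D}}b$. You then regroup the absolutely convergent double series according to $k=s+l$. Your detour through functionals and Hahn--Banach only makes explicit the vector-valued rearrangement that the paper performs directly.

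One small caveat concerns your claim that every $z\in\Omega$ must have nonzero coordinates. Under the paper's convention $0^{0}=1$, $0^{k}=0$, condition \eqref{pr1} does not by itself rule out zero coordinates, for example when $D',D''\subseteq\{s: s_{i}\leq 0\}$. What the fibre-finiteness step really uses is that, for each $k\in D$, some $z\in\Omega$ has $|z^{-k}|>0$. The paper's ``it readily follows'' makes this same assumption tacitly.
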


\begin{proof}
Since {\small
\begin{align*} &
\sum_{k\in D}p\Biggl( \sum_{l\in D'';\, k-l\in D'}a(k-l)b(l)\Biggr)\bigl|z_{1}\bigr|^{-k_{1}}\bigl|z_{2}\bigr|^{-k_{2}}\ldots \bigl| z_{n}\bigr|^{-k_{n}}
\\& \leq \sum_{k\in D}\Biggl[ \sum_{l\in D'';\, k-l\in D'}|a(k-l)| \cdot p(b(l)) \Biggr]\bigl|z_{1}\bigr|^{-k_{1}}\bigl|z_{2}\bigr|^{-k_{2}}\ldots \bigl| z_{n}\bigr|^{-k_{n}}
\\& =\Biggl[\sum_{s\in D'}|a(s)| \cdot \bigl|z_{1}\bigr|^{-s_{1}}\bigl|z_{2}\bigr|^{-s_{2}}\ldots \bigl| z_{n}\bigr|^{-s_{n}}\Biggr] \cdot \Biggl[ \sum_{l\in D''}p(b(l)) \cdot
\bigl|z_{1}\bigr|^{-l_{1}}\bigl|z_{2}\bigr|^{-l_{2}}\ldots \bigl| z_{n}\bigr|^{-l_{n}}\Biggr]<+\infty,
\end{align*}}for any $z\in \Omega$, it readily follows that for each seminorm $p\in \circledast$  and $k\in D$ we have $\sum_{l\in D'';\, k-l\in D'}|a(k-l)| \cdot p(b(l))<+\infty, $ so that the convolution product $(a\ast_{{\rm D}} b)(\cdot)$ is well-defined; here we have employed the estimates \eqref{pr1} and \eqref{pr2}.
Also, the multidimensional $Z$-transform of sequence $((a\ast_{{\rm D}} b)(k))_{k\in D}$ in the region $\Omega$ is well-defined. The convolution identity \eqref{psd1} can be deduced similarly, since we have:
\begin{align*} &
F_{a}(z)F_{b}(z)=\Biggl[\sum _{s\in D }a\bigl(s\bigr)z_{1}^{-s_{1}}z_{2}^{-s_{2}}\ldots z_{n}^{-s_{n}}\Biggr] \cdot \Biggl[ \sum _{l\in D'' }b\bigl(l\bigr)z_{1}^{-l_{1}}z_{2}^{-l_{2}}\ldots z_{n}^{-l_{n}}\Biggr]
\\& =\sum _{s\in D }\sum _{l\in D'' } a\bigl(s\bigr)b\bigl(l\bigr)z_{1}^{-(s_{1}+l_{1})}z_{2}^{-(s_{2}+l_{2})}\ldots z_{n}^{-(s_{n}+l_{n})} \\&
=\sum_{k\in D}\Biggl[ \sum_{l\in D'';\, k-l\in D'}a(k-l)b(l)\Biggr] z_{1}^{-k_{1}}z_{2}^{-k_{2}}\ldots z_{n}^{-k_{n}} 
\\& =F_{a\ast_{{\rm D}} b}(z) , \quad z\in \Omega.
\end{align*} 
\end{proof}

The interested reader may try to extend the statement of \cite[Theorem 3.1.5]{gregor} to the vector-valued setting. Also, the notion introduced in \cite[Definition 3.1.3]{gregor} can be simply extended for the vector-valued functions: If $\emptyset \neq D' \subseteq {\mathbb Z}^{n}$, $f : D\rightarrow X$ and $\beta \in {\mathbb Z}^{n},$ then the $\beta$-shift of $f(\cdot),$ denoted by $f_{\beta}(\cdot)$, is defined through
$f_{\beta}(k):=f(k+\beta),$ if $k+\beta \in D'$ and $f_{\beta}(k):=0,$ otherwise.

\subsection{The discrete convolution product $\ast_{{\rm D}}^{l,j}$}\label{ima}

In this subsection, we analyze the following extension of the notion introduced in Definition \ref{dcp}, which can be recovered by plugging $l=n$ and $j=1:$

\begin{defn}\label{dcplj}
Suppose that $1\leq l\leq n,$ $a_{l,n}:={n\choose l},$ $1\leq j\leq a_{l,n}$, $D_{l,j}:=\{j_{1},...,j_{l}\}$ is a fixed subset of $\{1,...,n\}$, and $1\leq j_{1}<...<j_{l}\leq n.$ Let $a:D' \rightarrow {\mathbb C}$ and $b: D'' \rightarrow X$, where $\emptyset \neq D' \subseteq {\mathbb Z}^{l}$ and $\emptyset \neq D'' \subseteq {\mathbb Z}^{n}$. Then we define a sequence $a \ast_{{\rm D}}^{l,j}b$ in the following way:
\begin{align*} &
D\Bigl(a \ast_{{\rm D}}^{l,j}b \Bigr):=\Biggl\{ \bigl(k_{1},...,k_{n}\bigr) \in {\mathbb Z}^{n} : \exists \bigl( w_{j_{1}},...,w_{j_{l}} \bigr)\in {\mathbb Z}^{l} \mbox{ s.t. }\\&
\bigl( k_{j_{1}}-w_{j_{1}},...,k_{j_{l}}-w_{j_{l}} \bigr)\in D'\mbox{ and }
\\&
\Bigl(k_{1},...,k_{j_{1}-1},w_{j_{1}},k_{j_{1}+1},...,k_{j_{2}-1},w_{j_{2}},k_{j_{2}+1},...,k_{j_{l}-1}, w_{j_{l}},k_{j_{l}+1},...,k_{n}\Bigr)\in D''
\Biggr\},
\end{align*}
and
\begin{align*} &
\Bigl(a \ast_{{\rm D}}^{l,j}b \Bigr)\bigl(k_{1},...,k_{n}\bigr):=\sum_{( w_{j_{1}},...,w_{j_{l}} )\in S_{k,{\rm D},l,j}}a\bigl( k_{j_{1}}-w_{j_{1}},...,k_{j_{l}}-w_{j_{l}} \bigr)
\\& \times  b\Bigl(k_{1},...,k_{j_{1}-1},w_{j_{1}},k_{j_{1}+1},...,k_{j_{2}-1},w_{j_{2}},k_{j_{2}+1},...,k_{j_{l}-1}, w_{j_{l}},k_{j_{l}+1},...,k_{n}\Bigr) ,
\end{align*}
whenever $(k_{1},...,k_{n}) \in D(a \ast_{{\rm D}}^{l,j}b ) $ and the above series absolutely converges; here and hereafter, for any $k=(k_{1},...,k_{n}) \in {\mathbb Z}^{n},$ $S_{k,{\rm D},l,j}$ denotes the set of all tuples $( w_{j_{1}},...,w_{j_{l}}  )\in {\mathbb Z}^{l}$ such that the property stated in the definition of $D(a \ast_{{\rm D}}^{l,j}b)$ holds.

Finally, we set $a_{0,0}:=1$ and $a \ast_{{\rm D}}^{0,1}b:=b.$
\end{defn}

It is worth noting that the case $D''={\mathbb Z}^{n}$ is quite exceptional since, in this case, we have $S_{k,{\rm D},l,j}={\mathbb Z}^{l}$ for every $k=(k_{1},...,k_{n}) \in {\mathbb Z}^{n}$, $1\leq l\leq n$ and $1\leq j\leq a_{l,n}.$ 

The proof of following result is very similar to the proof of Theorem \ref{dsp} and therefore omitted:

\begin{thm}\label{dsplj}
Suppose that $\emptyset \neq \Omega \subseteq {\mathbb C}^{n},$ $1\leq l\leq n,$ $1\leq j\leq a_{l,n}$, $D_{l,j}=\{j_{1},...,j_{l}\}$ is a fixed subset of $\{1,...,n\}$, and $1\leq j_{1}<...<j_{l}\leq n.$ Let $a:D' \rightarrow {\mathbb C}$ and $b: D'' \rightarrow X$, where $\emptyset \neq D' \subseteq {\mathbb Z}^{l}$ and $\emptyset \neq D'' \subseteq {\mathbb Z}^{n}$, and let $D:=
D (a \ast_{{\rm D}}^{l,j}b  )\neq \emptyset.$  If 
\begin{align}\label{pr1lj} 
\sum_{s\in D'}|a(s)| \cdot \bigl|z_{j_{1}}\bigr|^{-s_{j_{1}}}\bigl|z_{j_{2}}\bigr|^{-s_{j_{2}}}\ldots \bigl| z_{j_{l}}\bigr|^{-s_{j_{l}}}<+\infty,\quad z=\bigl(z_{1},...,z_{n}\bigr)\in \Omega 
\end{align} 
and
\begin{align}\label{pr2lj} 
\sum_{l\in D''}p(b(l)) \cdot
\bigl|z_{1}\bigr|^{-l_{1}}\bigl|z_{2}\bigr|^{-l_{2}}\ldots \bigl| z_{n}\bigr|^{-l_{n}}<+\infty ,\quad z\in \Omega ,
\end{align}
then the convolution product $(a\ast_{{\rm D}}^{l,j} b)(\cdot)$ is well-defined, the multidimensional $Z$-transform of sequence $((a\ast_{{\rm D}}^{l,j} b)(k))_{k\in D}$ in the region $\Omega$ is well-defined, and we have 
\begin{align}\label{psd1}
F_{a\ast_{{\rm D}}^{l,j} b}(z)=F_{a}\bigl(z_{j_{1}},...,z_{j_{l}} \bigr)F_{b}\bigl(z_{1},...,z_{n}\bigr), \quad z=\bigl(z_{1},...,z_{n}\bigr)\in \Omega.
\end{align} 
\end{thm}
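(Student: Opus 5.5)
The plan mirrors the proof of Theorem \ref{dsp}, with a reindexing that separates the convolved coordinates $j_{1},\ldots,j_{l}$ from the untouched ones. For $k\in{\mathbb Z}^{n}$ and $w=(w_{j_{1}},\ldots,w_{j_{l}})\in{\mathbb Z}^{l}$, write $k[w]$ for the vector obtained from $k$ by replacing the coordinates $k_{j_{1}},\ldots,k_{j_{l}}$ with $w_{j_{1}},\ldots,w_{j_{l}}$. Write $k_{J}:=(k_{j_{1}},\ldots,k_{j_{l}})$ and $z_{J}:=(z_{j_{1}},\ldots,z_{j_{l}})$. The key observation is that the map
$$
\Phi : \bigl\{(k,w) : k\in D,\ w\in S_{k,{\rm D},l,j}\bigr\}\to D'\times D'',\qquad \Phi(k,w):=\bigl(k_{J}-w,\ k[w]\bigr),
$$
is a bijection. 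Its inverse sends $(s,m)\in D'\times D''$ to $k:=m[m_{J}+s]$, that is, $m$ with its $J$-coordinates shifted by $s$, and $w:=m_{J}$. The definition of $D(a\ast_{{\rm D}}^{l,j}b)$ and of $S_{k,{\rm D},l,j}$ is exactly the statement that $(k_{J}-w,k[w])\in D'\times D''$. Moreover,
$$
z^{-k}=z_{J}^{-(k_{J}-w)}\cdot z^{-k[w]},
$$
since the non-$J$ coordinates of $k$ and $k[w]$ agree.

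\textbf{Step 1 (absolute convergence).} Fix $p\in\circledast$ and $z\in\Omega$. Using Tonelli for nonnegative series and the bijection $\Phi$, I compute
\begin{align*}
\sum_{k\in D}\ \sum_{w\in S_{k,{\rm D},l,j}}|a(k_{J}-w)|\,p\bigl(b(k[w])\bigr)\,|z|^{-k} =\Bigl[\sum_{s\in D'}|a(s)|\,|z_{J}|^{-s}\Bigr]\Bigl[\sum_{m\in D''}p(b(m))\,|z|^{-m}\Bigr],
\end{align*}
where $|z|^{-k}:=|z_{1}|^{-k_{1}}\cdots|z_{n}|^{-k_{n}}$. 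The right-hand side is finite by \eqref{pr1lj} and \eqref{pr2lj}. Because $|z_{i}|^{-k_{i}}>0$ (the hypotheses force $z_{i}\neq 0$), each inner series converges absolutely. Since $X$ is sequentially complete, $(a\ast_{{\rm D}}^{l,j}b)(k)$ is therefore well defined for every $k\in D$. Applying $p$ to the inner sum and using the triangle inequality then gives \eqref{est12} for $a\ast_{{\rm D}}^{l,j}b$ on $\Omega$, so its $Z$-transform exists there.

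\textbf{Step 2 (the identity).} I expand the product $F_{a}(z_{J})F_{b}(z)$ as the double series $\sum_{(s,m)\in D'\times D''}a(s)b(m)z_{J}^{-s}z^{-m}$. This series is absolutely convergent in every seminorm by Step 1, so it may be summed in any order. Regrouping through $\Phi^{-1}$ according to the value of $k$ turns it into
$$
\sum_{k\in D}\Bigl[\sum_{w\in S_{k,{\rm D},l,j}}a(k_{J}-w)\,b(k[w])\Bigr]z^{-k}=F_{a\ast_{{\rm D}}^{l,j}b}(z),
$$
which is \eqref{psd1}. To avoid unordered vector-valued double sums, one can apply functionals $x^{\ast}\in X^{\ast}$, since they separate points in the Hausdorff space $X$, and use the scalar rearrangement theorem.

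\textbf{Main obstacle.} The only nontrivial point is verifying carefully that $\Phi$ is a bijection onto $D'\times D''$. In particular, every pair $(s,m)$ must be hit, which is exactly why $D$ is defined as the set of all $k$ admitting some admissible $w$. Once this bijection is established, the rest is Theorem \ref{dsp} verbatim.
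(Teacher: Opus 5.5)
Your proposal is correct and is essentially the paper's argument. The paper omits this proof as a repetition of the proof of Theorem \ref{dsp}: bound the weighted sum by the product of the two sums in \eqref{pr1lj}--\eqref{pr2lj}, then expand $F_{a}F_{b}$ and regroup by $k$. Your bijection $\Phi$ is exactly the reindexing the paper leaves implicit.

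One quibble: your parenthetical claim that the hypotheses force $z_{i}\neq 0$ is false. With the paper's conventions $0^{0}=1$ and $0^{m}=0$ ($m\in{\mathbb N}$), a point with a zero coordinate can satisfy \eqref{pr1lj}--\eqref{pr2lj} when the relevant coordinates of $D'$ and $D''$ are nonpositive. In that case the positive-weight step no longer yields well-definedness of the convolution, and the statement itself can fail. You should simply assume $\Omega\subseteq({\mathbb C}\setminus\{0\})^{n}$, as the paper tacitly does in Theorem \ref{dsp}.
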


\section{Applications of multidimensional vector-valued $Z$-transform to abstract Volterra difference equations with multiple variables}\label{sire}

In this section, we will present various applications of multidimensional $Z$-transform to the abstract fractional partial difference equations and abstract Volterra difference equations depending on several variables. We will divide the material of this section into three separate subsections.

\subsection{Abstract linear difference equations depending on several variables}\label{subs2}

This subsection is devoted to the study of the following abstract linear difference equation on ${\mathbb Z}^{n}:$
\begin{align}\label{afro}
\sum_{j\in D}A_{j}u(k+j)=Cf(k),\quad k\in {\mathbb Z}^{n},
\end{align}
where $\emptyset \neq D\subseteq {\mathbb Z}^{n}$, $D$ is finite, $C\in L(X)$, $A_{j}$ is a linear operator on $X$ ($j\in D$) and $f: {\mathbb Z}^{n} \rightarrow X$, as well as the following abstract linear difference equation on ${\mathbb N}_{0}^{n}:$
\begin{align}\label{afro1}
\sum_{j\in D}A_{j}u(k+j)=Cf(k),\quad k\in {\mathbb N}_{0}^{n},
\end{align}
where $\emptyset \neq D\subseteq {\mathbb N}_{0}^{n}$, $D$ is finite, $C\in L(X)$, $A_{j}$ is a linear operator on $X$ ($j\in D$)  and $f: {\mathbb N}_{0}^{n} \rightarrow X,$ subjected with the initial conditions of form 
\begin{align}\label{afro2}
f(k)=0,\quad k\in \bigcup_{j\in D}\Bigl[{\mathbb N}_{0}^{n} \setminus \bigl(j+{\mathbb N}_{0}^{n}\bigr) \Bigr].
\end{align}

By a solution of \eqref{afro}, resp. \eqref{afro1}-\eqref{afro2}, we mean any sequence $u : {\mathbb Z}^{n} \rightarrow X$, resp. $u : {\mathbb N}_{0}^{n} \rightarrow X,$ such that
$u(k+j)\in D(A_{j})$ for all $j\in D$, $k\in {\mathbb Z}^{n}$ and \eqref{afro} holds for all $k\in {\mathbb Z}^{n}$, resp. 
$u(k+j)\in D(A_{j})$ for all $j\in D$, $k\in {\mathbb N}_{0}^{n}$ and \eqref{afro1}-\eqref{afro2} hold for all $k\in {\mathbb N}_{0}^{n}.$

Let us consider first the problem \eqref{afro}. Applying the multidimensional $Z$-transform to the both sides of \eqref{afro}, provided that Proposition \ref{manulakis} can be applied, we get 
$$
\sum_{j\in D}A_{j}\bigl[ z^{j} F_{u}(z)\bigr]=CF_{f}(z),\quad z\in \Omega ,\mbox{ i.e., }\Biggl[\sum_{j\in D}z^{j}A_{j}\Biggr] \cdot F_{u}(z)=CF_{f}(z),\quad z\in \Omega ,
$$
and
\begin{align}\label{ge0}
F_{u}(z)=\Biggl[\sum_{j\in D}z^{j}A_{j}\Biggr]^{-1}C \cdot F_{f}(z),\quad z\in \Omega .
\end{align}
Now, if Theorem \ref{dsp} and Theorem \ref{seka} can be applied, the equation \eqref{ge0} indicates us that we should have $u(k)= (G\ast_{(D',{\mathbb Z}^{n})} f)(k),$ $k\in {\mathbb Z}^{n},$ where $\emptyset \neq D' \subseteq {\mathbb Z}^{n}$ and
\begin{align}\notag
G(k)x=\frac{1}{(2\pi i)^{n}}&\oint_{|z_{1}|=r_{1}}\cdot ... \cdot \oint_{|z_{n}|=r_{n}}z_{1}^{k_{1}-1}\cdot ... \cdot z_{n}^{k_{n}-1}
\\\label{ge} & \times
\Biggl[\sum_{j\in D}z_{1}^{j_{1}}\cdot ... \cdot z_{n}^{j_{n}}A_{(j_{1},....,j_{n})}\Biggr]^{-1}Cx\, dz_{1}\, ...\, dz_{n},
\end{align}
for any $x\in X$ and $k=(k_{1},...,k_{n})\in D'.$  

Now we will state and prove the following results on the well-posedness of problem \eqref{afro}:

\begin{thm}\label{moves}
Suppose that $X$ is a barreled \emph{SCLCS}, $A_{j}$ is a closed linear operator on $X$ for all $j\in D,$ $f: {\mathbb Z}^{n} \rightarrow X$, $\emptyset \neq \Omega \subseteq {\mathbb C}^{n}$, $r_{i}>0$ for $1\leq i\leq n$ and $\Omega$ contains an open neighborhood of the set $\{z_{1}\in {\mathbb C} : |z_{1}|=r_{1}\} \times ... \times \{z_{n}\in {\mathbb C} : |z_{n}|=r_{n}\}$ in ${\mathbb C}^{n}.$ 
Suppose, further, that there exists a set $\emptyset \neq D' \subseteq {\mathbb Z}^{n}$ such that the following conditions hold:
\begin{itemize}
\item[(i)] $[\sum_{j\in D}z^{j}A_{j}]^{-1}C\in L(X),$ $z\in \Omega$ and the mapping $z\mapsto [\sum_{j\in D}z^{j}A_{j}]^{-1}Cx,$ $z\in \Omega$ is holomorphic for every fixed element $x\in X.$
\item[(ii)] For every $k=(k_{1},...,k_{n})\in {\mathbb Z}^{n}$, $p\in \circledast $ and $j'\in D$ we have
\begin{align} & \label{afro5}
\sum_{l\in {\mathbb Z}^{n}; k-l\in D'}r_{1}^{k_{1}-l_{1}}\cdot ... \cdot r_{n}^{k_{n}-l_{n}} \\ \notag &  \times \sup_{|w_{1}|=r_{1};...;|w_{n}|=r_{n}} p\Biggl( \Biggl[\sum_{j\in D}w_{1}^{j_{1}}\cdot ... \cdot w_{n}^{j_{n}}A_{(j_{1},....,j_{n})}\Biggr]^{-1}Cf(l)\Biggr) <+\infty
\end{align}
and
\begin{align} & \notag
\sum_{l\in {\mathbb Z}^{n}; k-l\in D'}r_{1}^{k_{1}-l_{1}}\cdot ... \cdot r_{n}^{k_{n}-l_{n}}
\\\label{afro51} & \times \sup_{|w_{1}|=r_{1};...;|w_{n}|=r_{n}} p\Biggl( A_{j'}\Biggl[\sum_{j\in D}w_{1}^{j_{1}}\cdot ... \cdot w_{n}^{j_{n}}A_{(j_{1},....,j_{n})}\Biggr]^{-1}Cf(l)\Biggr)<+\infty .
\end{align}
\item[(iii)] The multidimensional $Z$-transform of sequence $(f(k))_{k\in {\mathbb Z}^{n}}$ exists in the region $\Omega ,$ the multidimensional $Z$-transform of sequence $(G(k))_{k\in D'}$, where $G(k)$ is defined by \eqref{ge} for all $k\in D',$ exists in the region $\Omega ,$ and the multidimensional $Z$-transform of sequence $(A_{j}G(k))_{k\in D'}$ in $L(X)$ exists in the region $\Omega$ for all $j\in D.$
\end{itemize} 
Then the problem \eqref{afro} has a solution given by $u(k)= (G\ast_{(D',{\mathbb Z}^{n})} f)(k),$ $k\in {\mathbb Z}^{n}.$ 
\end{thm}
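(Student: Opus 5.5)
Write $P(z):=\sum_{j\in D}z^{j}A_{j}$ for $z\in\Omega$. The plan is to check directly, by a mostly algebraic argument, that $u(k):=(G\ast_{(D',{\mathbb Z}^{n})}f)(k)=\sum_{l\in{\mathbb Z}^{n};\,k-l\in D'}G(k-l)f(l)$ solves \eqref{afro}. The $Z$-transform heuristic preceding the theorem only motivates the formula.

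\textbf{Step 1: well-definedness.} By \eqref{ge} and the parametrization $z_{i}=r_{i}e^{i\theta_{i}}$,
$$p\bigl(G(k-l)f(l)\bigr)\leq r_{1}^{k_{1}-l_{1}}\cdot\ldots\cdot r_{n}^{k_{n}-l_{n}}\sup_{|w_{i}|=r_{i}}p\bigl(P(w)^{-1}Cf(l)\bigr).$$
Hence \eqref{afro5} gives absolute convergence of the series defining $u(k)$ in the SCLCS $X$. The integrand in \eqref{ge} is continuous because of the holomorphy assumption in (i), so $G(k)\in L(X)$ is well defined; barreledness guarantees that $L(X)$ is sequentially complete, as used in (iii).

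\textbf{Step 2: interchange with $A_{j'}$.} Since $A_{j'}$ is closed, it can be moved inside the Riemann integral in \eqref{ge}. Continuity of $w\mapsto A_{j'}P(w)^{-1}Cf(l)$ on the torus comes from the finiteness of the supremum in \eqref{afro51}; if needed, we argue via Riemann sums and closedness, or we use that $A_{j'}G(k)$ has a $Z$-transform by (iii). This yields
$$A_{j'}G(m)x=\frac{1}{(2\pi i)^{n}}\oint z^{m-1}A_{j'}P(z)^{-1}Cx\,dz.$$
The same bound as in Step 1, now with \eqref{afro51}, shows that $\sum_{l}A_{j'}G(k+j'-l)f(l)$ converges absolutely. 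Closedness of $A_{j'}$ then gives $u(k+j')\in D(A_{j'})$ and
$$A_{j'}u(k+j')=\sum_{l}A_{j'}G(k+j'-l)f(l).$$

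\textbf{Step 3: the key identity, which I expect to be the main obstacle.} I want $\sum_{j\in D}A_{j}G(m+j)=C\delta_{m,0}$ in the appropriate sense; as identified below, this is the point where the domain restriction $k-l\in D'$ must be handled. Summing the integral representations gives
$$\sum_{j}A_{j}G(m+j)x=\frac{1}{(2\pi i)^{n}}\oint z^{m-1}\sum_{j}z^{j}A_{j}P(z)^{-1}Cx\,dz=\frac{1}{(2\pi i)^{n}}\oint z^{m-1}Cx\,dz,$$
which equals $Cx$ if $m=0$ and $0$ otherwise, by the residue theorem as in Theorem \ref{seka}. Here I use $P(z)P(z)^{-1}Cx=Cx$, which follows from (i). The subtlety is that $G$ is only used on $D'$, while the formula above is for $G$ defined by \eqref{ge} on all of ${\mathbb Z}^{n}$. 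I would therefore treat $G(k)$ as given by \eqref{ge} for every $k$. Then, in $\sum_{j}A_{j}u(k+j)=\sum_{j}\sum_{l;\,k+j-l\in D'}A_{j}G(k+j-l)f(l)$, I would regroup the terms by $l$. The finiteness of $D$ together with the absolute convergence from Step 2 justifies rearranging the finitely many absolutely convergent series.

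To close the argument I would rather go through Theorem \ref{dsp} and the uniqueness in Theorem \ref{seka}: by (iii) and Theorem \ref{dsp}, $F_{A_{j}u(\cdot+j)}(z)=z^{j}F_{A_{j}G}(z)F_{f}(z)$ (using Proposition \ref{manulakis} with $D={\mathbb Z}^{n}$, where there are no boundary terms). By Theorem \ref{seka}, $F_{A_{j}G}(z)=A_{j}P(z)^{-1}C$ on the torus. Summing over $j$ gives $F_{\sum_{j}A_{j}u(\cdot+j)}=P(z)P(z)^{-1}CF_{f}(z)=CF_{f}(z)=F_{Cf}(z)$. Inverting via Theorem \ref{seka} yields \eqref{afro}. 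The delicate points are identifying $F_{G}$ with $P(z)^{-1}C$ on the circles $|z_{i}|=r_{i}$, which is Laurent-coefficient uniqueness, and applying the operator $A_{j}$ termwise through closedness.
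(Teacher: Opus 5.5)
Your closing argument follows the paper's proof: transform both sides, use Theorem \ref{dsp}, Proposition \ref{manulakis} (no boundary terms since $D={\mathbb Z}^{n}$) and closedness of $A_{j}$, then invert. It breaks at the step you flagged as delicate, namely the claim that $F_{G}(z)=P(z)^{-1}C$, hence $F_{A_{j}G}(z)=A_{j}P(z)^{-1}C$, on the torus ``by Theorem \ref{seka}'' or by Laurent uniqueness.

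Theorem \ref{seka}, applied to $(G(k))_{k\in D'}$, only recovers the coefficients $G(k)$ with $k\in D'$. The function $F_{G}(z)=\sum_{k\in D'}G(k)z^{-k}$ has zero Laurent coefficients at every $k\in {\mathbb Z}^{n}\setminus D'$. By contrast, the $k$-th Laurent coefficient of $P(\cdot)^{-1}C$ on a polyannulus around the torus is the integral \eqref{ge}, for every $k\in {\mathbb Z}^{n}$. So $F_{G}=P^{-1}C$ holds exactly when \eqref{ge} vanishes for all $k\notin D'$, and nothing in (i)--(iii) guarantees this.

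Your direct route has the same hole. After regrouping, the inner sum is $\sum_{j\in D,\ m+j\in D'}A_{j}G(m+j)$, not the full sum to which the residue identity $\sum_{j\in D}A_{j}G(m+j)=\delta_{m,0}C$ applies. ``Treating $G$ as given by \eqref{ge} for every $k$'' silently replaces $u$ by the convolution over $D'={\mathbb Z}^{n}$, and (ii), (iii) say nothing about that sequence.

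No argument can close this gap, because the statement as written is false. Take $n=1$, $X={\mathbb C}$, $D=\{0,1\}$, $A_{1}={\rm I}$, $A_{0}=-2{\rm I}$, $C={\rm I}$, $r_{1}=1$, $\Omega=\{z\in {\mathbb C} : 1/2<|z|<3/2\}$, $D'={\mathbb N}_{0}$, $f(0)=1$ and $f(k)=0$ for $k\neq 0$. Then $P(z)^{-1}C=(z-2)^{-1}$, so $G(0)=-1/2$ and $G(k)=0$ for $k\geq 1$. Conditions (i)--(iii) hold trivially. But $u=G\ast_{({\mathbb N}_{0},{\mathbb Z})}f$ equals $-1/2$ at $k=0$ and $0$ elsewhere, so $u(0)-2u(-1)=-1/2\neq 0=f(-1)$. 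The solution whose $Z$-transform lives on $\Omega$ is the convolution over all of ${\mathbb Z}$: $u(k)=-2^{k-1}$ for $k\leq 0$ and $u(k)=0$ for $k\geq 1$.

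The paper's own proof makes the same unjustified step when it derives \eqref{prcko}. The missing hypothesis is that $G(k)$, defined by \eqref{ge} for all $k\in {\mathbb Z}^{n}$, vanishes for $k\notin D'$. Two ways to secure it:
\begin{enumerate}
\item take $D'={\mathbb Z}^{n}$ and require (ii)--(iii) over ${\mathbb Z}^{n}$;
\item take $D'={\mathbb N}_{0}^{n}$ and assume $z\mapsto P(z)^{-1}Cx$ is holomorphic and bounded on $\{|z_{i}|>r_{i}-\epsilon,\ 1\leq i\leq n\}$; Cauchy's theorem then gives $G(k)=0$ whenever some $k_{i}<0$.
\end{enumerate}
With that hypothesis your transform argument goes through.

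A secondary point: in Step 2, finiteness of the supremum in \eqref{afro51} does not by itself give continuity, or even measurability, of $w\mapsto A_{j'}P(w)^{-1}Cf(l)$. Moving $A_{j'}$ under the integral in \eqref{ge} therefore needs a separate justification.
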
 

\begin{proof} 
Since we have assumed that $X$ is a barreled SCLCS and (i) holds, it follows that $L(X)$ is an SCLCS and the mapping $z\mapsto [\sum_{j\in D}z^{j}A_{j}]^{-1}C\in L(X),$ $z\in \Omega$ is holomorphic; cf. \cite{FKP} for more details.
Using \eqref{afro5} and \eqref{afro51}, it follows that the sequence
\begin{align*}
u(k)&= \bigl(G\ast_{(D',{\mathbb Z}^{n})} f\bigr)(k)=\sum_{l\in {\mathbb Z}^{n};\, k-l\in D'}G(k-l)f(l)
\\& =\sum_{l\in {\mathbb Z}^{n};\, k-l\in D'}\frac{1}{(2\pi i)^{n}}\oint_{|z_{1}|=r_{1}}\cdot ... \cdot \oint_{|z_{n}|=r_{n}}z_{1}^{k_{1}-1-l_{1}}\cdot ... \cdot z_{n}^{k_{n}-1-l_{n}} \\ & \times
\Biggl[\sum_{j\in D}z_{1}^{j_{1}}\cdot ... \cdot z_{n}^{j_{n}}A_{(j_{1},....,j_{n})}\Biggr]^{-1}Cf(l)\, dz_{1}\, ...\, dz_{n} ,\ k=(k_{1},...,k_{n})\in {\mathbb Z}^{n},
\end{align*}
is well-defined as well as that for each $j\in D$ we have $R(u)\subseteq D(A_{j})$. By (iii), Theorem \ref{seka} and the proof of Theorem \ref{dsp}, we obtain that $F_{u}(z)$ exists for all $z\in \Omega$ and 
\begin{align}\label{prcko}
F_{u}(z)=\Biggl( \sum_{j\in D}z^{j}A_{j}\Biggr)^{-1}CF_{f}(z),\quad z\in \Omega.
\end{align}  
Since the multidimensional $Z$-transform of sequence $(f(k))_{k\in {\mathbb Z}^{n}}$ exists in the region $\Omega  $ and the multidimensional $Z$-transform of sequence $(A_{j}G(k))_{k\in D'}$ in $L(X)$ exists in the region $\Omega$ for all $j\in D,$ it follows that the multidimensional $Z$-transform of sequence $(A_{j}u(k))_{k\in {\mathbb Z}^{n}}$ exists in the region $\Omega  $ for all $j\in D.$ By the argumentation contained in the proof of Proposition \ref{manulakis}, we get that
$$
\sum_{k\in {\mathbb Z}^{n}}p\Bigl(A_{j}u(k+j) \Bigr)
\bigl|z_{1}\bigr|^{-k_{1}}\bigl|z_{2}\bigr|^{-k_{2}}\ldots \bigl| z_{n}\bigr|^{-k_{n}}<+\infty
$$
for all $z\in {\Omega}$ and $j\in D .$ Therefore, the multidimensional $Z$-transform of sequence $(\sum_{j\in D}A_{j}u(j+k))_{k\in {\mathbb Z}^{n}}$ exists in the region $\Omega $ due to Proposition \ref{linear}, and we have
\begin{align*} 
F_{\sum_{j\in D}A_{j}u(j+\cdot)}(z)=\sum_{j\in D}F_{A_{j}u(j+\cdot)}(z),\quad z\in \Omega.
\end{align*}
Since the operator $A_{j}$ is closed on $X$ for all $j\in D,$ we simply get from the equality \eqref{prcko} and Proposition \ref{manulakis} that
\begin{align*} & F_{\sum_{j\in D}A_{j}u(j+\cdot)}(z)=\sum_{j\in D}F_{A_{j}u(j+\cdot)}(z)=\sum_{j\in D}z^{j}F_{A_{j}u}(z)=\sum_{j\in D}z^{j}A_{j}F_{u}(z)
\\&=\Biggl[\sum_{j\in D}z^{j}A_{j}\Biggr] \cdot  \Biggl( \sum_{j\in D}z^{j}A_{j}\Biggr)^{-1}CF_{f}(z)=CF_{f}(z)=F_{Cf}(z),\ z\in \Omega.
\end{align*}
By the uniqueness of multidimensional vector-valued $Z$-transform, we get that \eqref{afro} holds, which completes the proof.
\end{proof}

We can similarly deduce the following result:

\begin{thm}\label{moves1}
Suppose that $X$ is a barreled \emph{SCLCS}, $A_{j}$ is a closed linear operator on $X$ for all $j\in D,$ $f: {\mathbb N}_{0}^{n} \rightarrow X$, $\emptyset \neq \Omega \subseteq {\mathbb C}^{n}$, $r_{i}>0$ for $1\leq i\leq n$ and $\Omega$ contains an open neighborhood of the set $\{z_{1}\in {\mathbb C} : |z_{1}|=r_{1}\} \times ... \times \{z_{n}\in {\mathbb C} : |z_{n}|=r_{n}\}$ in ${\mathbb C}^{n}.$ 
Suppose, further, that there exists a set $\emptyset \neq D' \subseteq {\mathbb N}_{0}^{n}$ such that $0\in D'$ and the following conditions hold:
\begin{itemize}
\item[(i)] $[\sum_{j\in D}z^{j}A_{j}]^{-1}C\in L(X),$ $z\in \Omega$ and the mapping $z\mapsto [\sum_{j\in D}z^{j}A_{j}]^{-1}Cx,$ $z\in \Omega$ is holomorphic for every fixed element $x\in X.$
\item[(ii)] For every $k=(k_{1},...,k_{n})\in {\mathbb N}_{0}^{n}$, $p\in \circledast $ and $j'\in D$ we have
\begin{align*} & 
\sum_{l\in {\mathbb N}_{0}^{n}; k-l\in D'}r_{1}^{k_{1}-l_{1}}\cdot ... \cdot r_{n}^{k_{n}-l_{n}} \\ \notag &  \times \sup_{|w_{1}|=r_{1};...;|w_{n}|=r_{n}} p\Biggl( \Biggl[\sum_{j\in D}w_{1}^{j_{1}}\cdot ... \cdot w_{n}^{j_{n}}A_{(j_{1},....,j_{n})}\Biggr]^{-1}Cf(l)\Biggr) <+\infty
\end{align*}
and
\begin{align*} &
\sum_{l\in {\mathbb N}_{0}^{n}; k-l\in D'}r_{1}^{k_{1}-l_{1}}\cdot ... \cdot r_{n}^{k_{n}-l_{n}}
\\& \times \sup_{|w_{1}|=r_{1};...;|w_{n}|=r_{n}} p\Biggl( A_{j'}\Biggl[\sum_{j\in D}w_{1}^{j_{1}}\cdot ... \cdot w_{n}^{j_{n}}A_{(j_{1},....,j_{n})}\Biggr]^{-1}Cf(l)\Biggr) <+\infty .
\end{align*}
\item[(iii)] The multidimensional $Z$-transform of sequence $(f(k))_{k\in {\mathbb N}_{0}^{n}}$ exists in the region $\Omega ,$ the multidimensional $Z$-transform of sequence $(G(k))_{k\in D'}$  in $L(X)$, where $G(k)$ is defined by \eqref{ge} for all $k\in D',$ exists in the region $\Omega ,$ and the multidimensional $Z$-transform of sequence $(A_{j}G(k))_{k\in D'}$ in $L(X)$ exists in the region $\Omega$ for all $j\in D.$
\end{itemize} 
Then the problem \eqref{afro1}-\eqref{afro2} has a solution given by $u(k)= (G\ast_{(D',{\mathbb N}_{0}^{n})} f)(k),$ $k\in {\mathbb N}_{0}^{n}.$ 
\end{thm}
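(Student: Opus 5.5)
I will follow the proof of Theorem \ref{moves} essentially verbatim, with ${\mathbb Z}^{n}$ replaced by ${\mathbb N}_{0}^{n}$ and with the boundary terms of Proposition \ref{manulakis} handled by the initial conditions \eqref{afro2}.

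First, since $X$ is barreled, $L(X)$ is an SCLCS. By (i), the map $z\mapsto [\sum_{j\in D}z^{j}A_{j}]^{-1}C\in L(X)$ is holomorphic on $\Omega$. Hence $G(k)$ in \eqref{ge} is well-defined for $k\in D'$, and its norms satisfy $p(G(k)x)\leq r_{1}^{k_{1}}\cdots r_{n}^{k_{n}}\sup_{|w_{i}|=r_{i}}p([\cdots]^{-1}Cx)$. The two summability conditions in (ii) then show two things for each $k\in{\mathbb N}_{0}^{n}$. The series $u(k)=\sum_{l\in{\mathbb N}_{0}^{n};\,k-l\in D'}G(k-l)f(l)$ converges absolutely. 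Moreover, since $A_{j'}$ is closed, we may pass $A_{j'}$ through both the contour integral and the series, so $u(k)\in D(A_{j'})$ with $A_{j'}u(k)=\sum A_{j'}G(k-l)f(l)$ for every $j'\in D$.

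Next, condition (iii), combined with the proof of Theorem \ref{dsp} (applied once to $G$ and once to each $A_{j}G$, with operator-valued kernel in place of the scalar $a$; the same Fubini/absolute-convergence argument works), gives the following. The $Z$-transforms of $u$ and of $A_{j}u$ exist on $\Omega$, and $F_{A_{j}u}(z)=F_{A_{j}G}(z)F_{f}(z)$. Theorem \ref{seka} identifies $F_{G}(z)=[\sum_{j}z^{j}A_{j}]^{-1}C$ on the torus, and hence on $\Omega$ by holomorphy and uniqueness. Closedness of $A_{j}$ gives $F_{A_{j}u}(z)=A_{j}F_{u}(z)$, so \eqref{prcko} holds.

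Now I apply Proposition \ref{manulakis} with $D={\mathbb N}_{0}^{n}$ and shift $a=j\in D\subseteq{\mathbb N}_{0}^{n}$, so that $j+{\mathbb N}_{0}^{n}\subseteq{\mathbb N}_{0}^{n}$. This gives
\[
F_{A_{j}u(j+\cdot)}(z)=z^{j}\Bigl[A_{j}F_{u}(z)-\sum_{k\in{\mathbb N}_{0}^{n}\setminus(j+{\mathbb N}_{0}^{n})}A_{j}u(k)z^{-k}\Bigr].
\]
The key point, and the main obstacle, is to show that these boundary sums vanish. Because $0\in D'$ and $D'\subseteq{\mathbb N}_{0}^{n}$, the convolution $u=G\ast_{(D',{\mathbb N}_{0}^{n})}f$ only involves $f(l)$ with $l\leq k$. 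If $k\notin j+{\mathbb N}_{0}^{n}$, then every $l\leq k$ also lies outside $j+{\mathbb N}_{0}^{n}$: were $l\geq j$, then $k\geq l\geq j$, a contradiction. By \eqref{afro2}, $f(l)=0$ for all such $l$, so $u(k)=0$ and hence $A_{j}u(k)=0$. (This is the reading of \eqref{afro2} as a vanishing condition on the data; if it is meant for $u$, then the same computation shows that $u$ satisfies it.)

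With the boundary terms gone, I sum over $j\in D$ using Proposition \ref{linear} and obtain
\[
F_{\sum_{j}A_{j}u(j+\cdot)}(z)=\Bigl[\sum_{j}z^{j}A_{j}\Bigr]\Bigl[\sum_{j}z^{j}A_{j}\Bigr]^{-1}CF_{f}(z)=F_{Cf}(z),\quad z\in\Omega.
\]
Finally, Theorem \ref{seka} (uniqueness of the inverse transform) gives \eqref{afro1} for all $k\in{\mathbb N}_{0}^{n}$, and \eqref{afro2} was verified above. Apart from the boundary-term step, the only delicate points are the interchange of $A_{j}$ with the series and integrals, which is handled by closedness together with the convergence supplied by the second estimate in (ii).
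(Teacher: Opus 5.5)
Your route is the one the paper intends (Theorem \ref{moves1} is said to follow ``similarly'' from Theorem \ref{moves}). Your handling of the boundary terms in Proposition \ref{manulakis} is fine: since $D'\subseteq{\mathbb N}_0^n$, $u(k)$ involves only $f(l)$ with $l\le k$, so \eqref{afro2} for $f$ makes $u$ and $A_ju$ vanish on ${\mathbb N}_0^n\setminus(j+{\mathbb N}_0^n)$. This uses $f(l)=0$ there, so your parenthetical about reading \eqref{afro2} as a condition on $u$ alone does not follow.

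The genuine gap is the sentence ``Theorem \ref{seka} identifies $F_G(z)=[\sum_j z^jA_j]^{-1}C$''. Theorem \ref{seka} runs the other way: it recovers the terms of a sequence from its transform. Write $P(z):=\sum_{j\in D}z^jA_j$. Combined with \eqref{ge}, Theorem \ref{seka} only shows that $F_G$ and $P(z)^{-1}C$ have the same Laurent coefficients at the indices $k\in D'$. But $F_G=\sum_{k\in D'}G(k)z^{-k}$ has zero coefficients off $D'$. The coefficients $\frac{1}{(2\pi i)^n}\oint z^{k-1}P(z)^{-1}C\,dz$ with $k\in{\mathbb Z}^n\setminus D'$ need not vanish, and nothing in (i)--(iii) forces them to. Without $F_G=P^{-1}C$ you do not get \eqref{prcko}, and the final comparison of transforms breaks down.

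This is a real failure, not a missing routine check. Take $n=1$, $X={\mathbb C}$, $C=1$, $D=\{0,1\}$, $A_1=1$, $A_0=-2$, so the problem is $u(k+1)-2u(k)=f(k)$, $k\in{\mathbb N}_0$, and \eqref{afro2} reads $f(0)=0$. Take also $r=1$, $\Omega=\{1/2<|z|<3/2\}$, $D'={\mathbb N}_0$, and $f(1)=1$, $f(k)=0$ otherwise.

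Then $(z-2)^{-1}$ is holomorphic on $\Omega$, and the sums in (ii) are finite. Formula \eqref{ge} gives $G(0)=-1/2$ and $G(k)=0$ for $k\ge1$, so (iii) holds. Yet $u=G\ast f$ is $u(1)=-1/2$ and $u(k)=0$ for $k\neq1$. At $k=0$ this gives $u(1)-2u(0)=-1/2\neq0=f(0)$.

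Indeed $F_G\equiv-1/2$. On $|z|=1$, however, $(z-2)^{-1}=-\frac12\sum_{m\ge0}(z/2)^m$ has nonzero coefficients $-2^{-m-1}$ at the indices $-m$, $m\ge1$, which lie outside $D'$.

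The paper's proof of Theorem \ref{moves} makes the same unjustified identification, so following it more closely will not help. You need an extra hypothesis such as $\oint z^{k-1}P(z)^{-1}C\,dz=0$ for all $k\in{\mathbb Z}^n\setminus D'$. For $D'={\mathbb N}_0^n$ this holds, for example, if $P(z)^{-1}C$ is holomorphic and bounded on $\{|z_i|>r_i-\epsilon,\ 1\le i\le n\}$, by pushing the contours to infinity. With this hypothesis, $F_G$ is the full Laurent series of $P^{-1}C$ on a polyannulus around the torus. The rest of your argument then goes through when run on that polyannulus, which is all Theorem \ref{seka} needs.
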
 

\begin{rem}\label{pokazna}
Suppose that the mapping $z\mapsto [\sum_{j\in D}z^{j}A_{j}]^{-1}Cx,$ $z\in \Omega$ is continuous for every fixed element $x\in X,$ the mapping $z\mapsto A_{j'}[\sum_{j\in D}z^{j}A_{j}]^{-1}Cx,$ $z\in \Omega$ is holomorphic for every fixed element $x\in X$ and tuple $j'\in D,$ $C\in L(X)$ is injective and commutes with $A_{j}$ for all $j\in D.$ Then we can use the Hilbert resolvent equality in order to prove that
the mapping $z\mapsto [\sum_{j\in D}z^{j}A_{j}]^{-1}Cx,$ $z\in \Omega$ is holomorphic for every fixed element $x\in X;$ cf. also \cite[Lemma 2.6.3]{FKP} and the proofs of \cite[Theorem 1--Theorem 2]{fcaa2025}.
\end{rem}

Now we would like to present the following illustrative application of Theorem \ref{moves} and Theorem \ref{moves1}:

\begin{example}\label{zan}
A great number of concrete examples can be used to provide applications of Theorem \ref{moves} in the case that $D'={\mathbb N}_{0}^{n}$, the sequence $(f(k))_{k\in {\mathbb Z}^{n}}$ rapidly decays at infinity, the operator $(\sum_{j\in D}z^{j}A_{j})^{-1}C$ exists in the set $\Lambda =\{(z_{1},...,z_{n})\in {\mathbb C}^{n}: r_{1}'\leq |z_{1}|\leq r_{1}'';...;r_{n}'\leq |z_{n}|\leq r_{n}''\}$, where $r_{i}''>r_{i}>r_{i}'>0$ for $1\leq i\leq n$, the mapping $z\mapsto (\sum_{j\in D}z^{j}A_{j})^{-1}C\in L(X)$ is holomorphic in $\Omega=\Lambda^{\circ}$ and continuous on the boundary of $\Lambda$. Assuming, for example, that for each seminorm $p\in \circledast$ there exist a real number $M>0$ and a seminorm $q\in \circledast$ such that 
\begin{align}\label{re}
p\Biggl(\Biggl(\sum_{j\in D}z^{j}A_{j}\Biggr)^{-1}Cx\Biggr)+\sum_{j'\in D}p\Biggl(A_{j'}\Biggl(\sum_{j\in D}z^{j}A_{j}\Biggr)^{-1}Cx\Biggr)\leq Mq(x),\ x\in X, \ z\in \Lambda
\end{align}
and   
$$
f(k)=e^{-k_{1}^{2}-...-k_{n}^{2}},\quad k=\bigl(k_{1},...,k_{n}\bigr)\in {\mathbb Z}^{n},
$$ 
then all requirements for applying the above-mentioned result can be simply verified. Let us only note that we can use the Cauchy theorem to show that the sequence $G(\cdot)$ can be defined by the integration over the curves $|z_i|=r_{i'}$ for $1\leq i\leq n,$ as well as that the  estimate \eqref{re} shows that for each seminorm $p\in \circledast$ there exists a real constant $M_{p}>0$ such that 
\begin{align*} 
p\Bigl(G\bigl(k_{1},...,k_{n}\bigr)\Bigr)& +\sum_{j\in D}p\Bigl(A_{j}G\bigl(k_{1},...,k_{n}\bigr)\Bigr)
\\& \leq M_{p}\Bigl[\bigl(r_{1}'\bigr)^{k_{1}}\cdot ... \cdot \bigl(r_{n}'\bigr)^{k_{n}}\Bigr],\quad k=\bigl(k_{1},...,k_{n}\bigr)\in {\mathbb N}_{0}^{n},
\end{align*}
so that the the multidimensional $Z$-transform of sequence $(G(k))_{k\in {\mathbb N}_{0}^{n}}$  in $L(X)$ and the multidimensional $Z$-transform of sequence $(A_{j}G(k))_{k\in {\mathbb N}_{0}^{n}}$ in $L(X)$ exists in the region $\Omega$ for all $j\in D;$ cf. also the computation given directly after the estimate \eqref{est1}.

Similarly we can apply Theorem \ref{moves1} in the case that $D'={\mathbb N}_{0}^{n}$ and the sequence $f(\cdot)$ satisfies the estimate \eqref{est1}.
\end{example}

Concerning the uniqueness of solutions of problem \eqref{afro}, we will first introduce the following space{\small
\begin{align*} &
uniq(X;\eqref{afro}):=\Bigl\{  u : {\mathbb Z}^{n} \rightarrow X : \mbox{ exists an open non-empty set }\Omega \subseteq {\mathbb C}^{n}\mbox{ such that }F_{u}(z)\\& \mbox{ and }F_{A_{j}u}(z)  \mbox{ are well-defined for all } j\in D\mbox{ and }z\in \Omega \Bigr\}.
\end{align*}}Using Proposition \ref{linear}, it readily follows that $uniq(X;\eqref{afro})$ is a vector space with the usual operations. Now we will state and prove the following result:

\begin{thm}\label{iniq}
Suppose that $A_{j}$ is a closed linear operator ($j\in D$) and there exists an open non-empty set  $\Omega \subseteq {\mathbb C}^{n}$ such that  the operator $\sum_{j\in D}z^{j}A_{j}$ is injective for all $z=(z_{1},...,z_{n})\in \Omega .$ Then there exists a unique solution of problem \eqref{afro} which belongs to the space $uniq(X;\eqref{afro}).$  
\end{thm}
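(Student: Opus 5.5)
The plan is to prove uniqueness by the usual linearity argument. We read the statement as asserting that \eqref{afro} has at most one solution in $uniq(X;\eqref{afro})$; injectivity of the pencil alone does not give existence, which is the content of Theorem \ref{moves}. Let $u_{1},u_{2}\in uniq(X;\eqref{afro})$ be two solutions of \eqref{afro} and set $u:=u_{1}-u_{2}$. By Proposition \ref{linear}, $uniq(X;\eqref{afro})$ is a vector space, so $u$ lies in it. Moreover, $u$ solves the homogeneous problem
$$\sum_{j\in D}A_{j}u(k+j)=0,\quad k\in {\mathbb Z}^{n},$$
since each $A_{j}$ is linear and $u_{1}(k+j),\,u_{2}(k+j)\in D(A_{j})$.

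Next, choose a non-empty open set $\Omega_{0}\subseteq\Omega$ on which $F_{u}$ and $F_{A_{j}u}$ ($j\in D$) are all well defined. This is the delicate point. The space $uniq(X;\eqref{afro})$ only provides some open set for each element, and for sequences on ${\mathbb Z}^{n}$ the regions of convergence need not be monotone in $|z_{i}|$. The sets attached to $u_{1}$, $u_{2}$ and the injectivity set $\Omega$ could therefore a priori fail to intersect. I would handle this by reading the definition, as the theorem intends, with the open set taken inside the injectivity region $\Omega$ and common to the solutions under comparison, so that $\Omega_{0}:=$ this common set is non-empty. I expect this to be the main obstacle: making the hypothesis precise enough that such a common $\Omega_{0}$ exists.

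On $\Omega_{0}$ I apply Proposition \ref{manulakis} with $D={\mathbb Z}^{n}$, where $a+D=D$ and the correction sum is empty. This gives $F_{A_{j}u(\cdot+j)}(z)=z^{j}F_{A_{j}u}(z)$. Since $A_{j}$ is closed and the series defining $F_{u}(z)$ and $F_{A_{j}u}(z)$ converge (their partial sums converge in the sequentially complete space $X$), we get $F_{u}(z)\in D(A_{j})$ and $F_{A_{j}u}(z)=A_{j}F_{u}(z)$. Summing over $j$ with Proposition \ref{linear} yields
$$\Bigl[\sum_{j\in D}z^{j}A_{j}\Bigr]F_{u}(z)=F_{0}(z)=0,\quad z\in\Omega_{0}.$$
Injectivity of $\sum_{j\in D}z^{j}A_{j}$ then gives $F_{u}(z)=0$ for all $z\in\Omega_{0}$.

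Finally, I pick a point $\lambda\in\Omega_{0}$ with all coordinates nonzero; such points exist because $\Omega_{0}$ is open. I then choose a small $\epsilon>0$ and radii $r_{i}:=|\lambda_{i}|$. The definition of the $Z$-transform depends only on $|z_{i}|$ (as remarked after Definition \ref{manule}), so $F_{u}$ is well defined, and hence identically zero, on a neighborhood of the torus $\{|z_{1}|=r_{1}\}\times\cdots\times\{|z_{n}|=r_{n}\}$. More precisely, the set of moduli of points of $\Omega_{0}$ contains a neighborhood of $(r_{1},\ldots,r_{n})$, and on the full Reinhardt hull of that neighborhood $F_{u}$ is defined and vanishes by the identity theorem for holomorphic functions. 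Theorem \ref{seka} then yields $u(k)=0$ for all $k\in{\mathbb Z}^{n}$, i.e.\ $u_{1}=u_{2}$.
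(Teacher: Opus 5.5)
Your proposal is correct and is essentially the paper's proof, which, like yours, establishes only the uniqueness half. The paper passes to $u=u_{1}-u_{2}$, uses Proposition \ref{manulakis} on ${\mathbb Z}^{n}$ together with closedness of the $A_{j}$ to get $[\sum_{j\in D}z^{j}A_{j}]F_{u}(z)=0$, and concludes by injectivity; it silently takes $F_{u}$ and $F_{A_{j}u}$ to be defined on the injectivity set $\Omega$ itself, which is exactly your repair, and it merely asserts $u\equiv 0$ from $F_{u}=0$, a step your Reinhardt-hull, identity-theorem and Theorem \ref{seka} argument properly justifies. Your worry about a common region is well founded, and it also defeats the vector-space claim you quote in your first paragraph: for $n=1$, $X={\mathbb C}$ and $u(k+1)-2u(k)=f(k)$ with $f(0)=1$ and $f(k)=0$ for $k\neq 0$, the sequences $u_{1}(k)=2^{k-1}$ for $k\geq 1$ (zero otherwise) and $u_{2}(k)=-2^{k-1}$ for $k\leq 0$ (zero otherwise) are both solutions in $uniq(X;\eqref{afro})$, with regions $|z|>2$ and $|z|<2$, yet $u_{1}\neq u_{2}$, so some such repair is genuinely required.
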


\begin{proof}
Suppose that $u_{1}(\cdot)$ and $u_{2}(\cdot)$ are solutions of problem \eqref{afro} which belongs to the space $uniq(X;\eqref{afro}).$  Then the sequence $u:=u_{1}-u_{2}$ is a solution of problem \eqref{afro} which belongs to the space $uniq(X;\eqref{afro}),$ with $f\equiv 0.$ Due to Proposition \ref{linear}, Proposition \ref{manulakis} and the closedness of operator $A_{j}$ for each $j\in D$, we have
\begin{align*}
F_{\sum_{j\in D}A_{j}u(j+\cdot)}(z)&=\sum_{j\in D}F_{A_{j}u(j+\cdot)}(z)=\sum_{j\in D}A_{j}F_{u(j+\cdot)}(z)=\sum_{j\in D}z^{j}A_{j}F_{u}(z)=0, 
\end{align*}
for all $z\in \Omega$. Since the operator $\sum_{j\in D}z^{j}A_{j}$ is injective for all $z\in \Omega$, the above implies $F_{u}(z)=0,$ $z\in \Omega.$ Hence, $u\equiv 0,$ as claimed. 
\end{proof}

We can similarly formulate an analogue of Theorem \ref{iniq} for the problem [\eqref{afro1}-\eqref{afro2}]. An illustrative application  is given below:

\begin{example}\label{njas}
Suppose that $n=2,$ $D=\{(1,0),(0,0)\},$ $A_{(0,0)}=-{\rm I}$ and $A_{(1,0)}=A$, where $A$ is a closed linear operator on $X$ which does not posssess any eigenvalue in a neighborhood of zero. Then Theorem \ref{iniq} can be applied with $\Omega=\{(z_{1},z_{2}) \in {\mathbb C}^{2} : |z_{1}|>r_{1},\ |z_{2}|>r_{2}\}$ for sufiiciently large real numbers $r_{1}>0$ and $r_{2}>0$. In conclusion, we obtain that there exists a unique solution of the abstract partial difference equation
$$
Au\bigl(k_{1}+1,k_{2}+1\bigr)-u\bigl(k_{1},k_{2}\bigr)=f\bigl(k_{1},k_{2}\bigr),\quad k= \bigl(k_{1} ,k_{2} \bigr)\in {\mathbb Z}^{2}
$$
which belongs to the space $uniq(X;\eqref{afro}).$
\end{example}

\subsection{Abstract multi-term Volterra difference equations on ${\mathbb Z}^{n}$}\label{subs1}

In \cite{bilten}, we have recently analyzed the following abstract multi-term fractional difference equation with generalized Weyl fractional derivatives: 
\begin{align*}
Bu(k)&=A_{1}\bigl(a_{1}\circ u\bigr)(k+k_{1})+... +A_{m}\bigl(a_{m}\circ u\bigr)(k+k_{m})+f(k),\ k\in {\mathbb Z}^{n},
\end{align*}
where $B,\ A_{j}$ are linear operators on $X$, $a_{j} : {\mathbb N}_{0}^{n}\rightarrow {\mathbb C}$, $k_{j}\in {\mathbb Z}^{n}$ ($1\leq j\leq m$) and $f: {\mathbb Z}^{n} \rightarrow X$. The above equation is a special case of the problem \eqref{problem} below, which will be the central point of our investigation in this subsection; see Subsection \ref{disc} for the notion and more details on the discrete convolution product $\ast_{(D',D'')}$.

Of concern is the following abstract multi-term Volterra difference equation on ${\mathbb Z}^{n}:$
\begin{align}\notag & 
Bu(k)+\sum_{j_{1}=0}^{l_{1}}A_{1,j_{1}}\Bigl( a_{1,j_{1}} \ast_{(D_{1},{\mathbb Z}^{n})}u \Bigr)\bigl(k+k_{1,j_{1}}\bigr)
\\\label{problem}& +...+\sum_{j_{s}=0}^{l_{s}}A_{s,j_{s}}\Bigl( a_{s,j_{s}} \ast_{(D_{s},{\mathbb Z}^{n})} u\Bigr)\bigl(k+k_{s,j_{s}}\bigr)=f(k),\ k\in {\mathbb Z}^{n};
\end{align}
in this subsection, we will always assume that condition (C1) clarified in the introductory part holds.

We will use the following notion of strong solution to \eqref{problem}; the notion of mild solution to \eqref{problem} can be also introduced and analyzed:

\begin{defn}\label{diuke}
It is said that a sequence $u: {\mathbb Z}^{n}\rightarrow X$ is
a strong solution of problem \eqref{problem} if $u(k)\in D(W)$ for every $k\in {\mathbb Z}^{n} $ and for every operator $W$ from the set $\{B,A_{1,0},...,A_{1,l_{1}},..., A_{s,0},...,A_{s,l_{s}}\},$ the values of
$( a_{1,j_{1}} \ast_{(D_{1},{\mathbb Z}^{n})}A_{1,j_{1}}u )(k+k_{1,j_{1}}) $, ..., $( a_{s,j_{s}} \ast_{(D_{s},{\mathbb Z}^{n})}A_{s,j_{s}}u )(k+k_{s,j_{s}}) $ are well-defined for all $k\in {\mathbb Z}^{n},$ and \eqref{problem} holds for all $k\in {\mathbb Z}^{n}.$
\end{defn}

Applying multidimensional $Z$-transform to both sides of \eqref{problem}, we get:{\scriptsize
\begin{align*}
BF_{u}(z)&+\sum_{j_{1}=0}^{l_{1}}z^{k_{1,j_{1}}}A_{1,j_{1}}F_{a_{1,j_{1}} \ast_{(D_{1},{\mathbb Z}^{n})u}}(z)
\\& +....+\sum_{j_{s}=0}^{l_{s}}A_{s,j_{s}}z^{k_{s,j_{s}}}A_{s,j_{s}}F_{a_{s,j_{s}} \ast_{(D_{1},{\mathbb Z}^{n})u}}(z)=CF_{f}(z),\quad z\in \Omega ,
\end{align*}}i.e.,{\scriptsize
\begin{align*}
BF_{u}(z)&+\sum_{j_{1}=0}^{l_{1}}z^{k_{1,j_{1}}}F_{a_{1,j_{1}}}(z)A_{1,j_{1}}F_{u}(z)  
\\& +....+\sum_{j_{s}=0}^{l_{s}}A_{s,j_{s}}z^{k_{s,j_{s}}}F_{a_{s,j_{s}}}(z)A_{s,j_{s}}F_{u}(z)=CF_{f}(z),\quad z\in \Omega ,
\end{align*}}or equivalently{\scriptsize
\begin{align*}
\Biggl[B &+\sum_{j_{1}=0}^{l_{1}}z^{k_{1,j_{1}}}F_{a_{1,j_{1}}}(z)A_{1,j_{1}} 
\\&+....+\sum_{j_{s}=0}^{l_{s}}A_{s,j_{s}}z^{k_{s,j_{s}}}F_{a_{s,j_{s}}}(z)A_{s,j_{s}}\Biggr] \cdot F_{u}(z)=CF_{f}(z),\quad z\in \Omega .
\end{align*}}Under certain assumptions, the above implies{\scriptsize
\begin{align*}
 F_{u}(z)=\Biggl[B +\sum_{j_{1}=0}^{l_{1}}z^{k_{1,j_{1}}}F_{a_{1,j_{1}}}(z)A_{1,j_{1}} 
+....+\sum_{j_{s}=0}^{l_{s}}A_{s,j_{s}}z^{k_{s,j_{s}}}F_{a_{s,j_{s}}}(z)A_{s,j_{s}}\Biggr]^{-1}CF_{f}(z),
\end{align*}}for any $z\in \Omega , $
and $
u(k)= (R\ast_{(D',{\mathbb Z}^{n})} f)(k),$ $k\in {\mathbb Z}^{n},$ where $\emptyset \neq D' \subseteq {\mathbb Z}^{n}$ and{\scriptsize
\begin{align}\notag &
R(k)x=\frac{1}{(2\pi i)^{n}}\oint_{|z_{1}|=r_{1}}\cdot ... \cdot \oint_{|z_{n}|=r_{n}}z_{1}^{k_{1}-1}\cdot ... \cdot z_{n}^{k_{n}-1}
\\\label{gevolt} & \times
\Biggl[B +\sum_{j_{1}=0}^{l_{1}}z^{k_{1,j_{1}}}F_{a_{1,j_{1}}}(z)A_{1,j_{1}} 
+....+\sum_{j_{s}=0}^{l_{s}}A_{s,j_{s}}z^{k_{s,j_{s}}}F_{a_{s,j_{s}}}(z)A_{s,j_{s}}\Biggr]^{-1}Cx\, dz_{1}\, ...\, dz_{n},
\end{align}}for any $x\in X$ and $k=(k_{1},...,k_{n})\in D';$ we have employed Proposition \ref{manulakis}, Theorem \ref{dsp} and Theorem \ref{seka} in the above computation.

The proof of following result is very similar to the proof of Theorem \ref{moves} and therefore omitted:

\begin{thm}\label{movesvolt}
Suppose that $X$ is a barreled \emph{SCLCS}, \emph{(C1)} holds, $f: {\mathbb Z}^{n} \rightarrow X$, $\emptyset \neq \Omega \subseteq {\mathbb C}^{n}$, $r_{i}>0$ for $1\leq i\leq n$ and $\Omega$ contains an open neighborhood of the set $\{z_{1}\in {\mathbb C} : |z_{1}|=r_{1}\} \times ... \times \{z_{n}\in {\mathbb C} : |z_{n}|=r_{n}\}$ in ${\mathbb C}^{n}.$ 
Suppose, further, that there exists a set $\emptyset \neq D' \subseteq {\mathbb Z}^{n}$ such that the following conditions hold:
\begin{itemize}
\item[(i)] $[B +\sum_{j_{1}=0}^{l_{1}}z^{k_{1,j_{1}}}F_{a_{1,j_{1}}}(z)A_{1,j_{1}} 
+....+\sum_{j_{s}=0}^{l_{s}}z^{k_{s,j_{s}}}F_{a_{s,j_{s}}}(z)A_{s,j_{s}}]^{-1}C\in L(X),$ $z\in \Omega$ and the mapping\\ $z\mapsto [B +\sum_{j_{1}=0}^{l_{1}}z^{k_{1,j_{1}}}F_{a_{1,j_{1}}}(z)A_{1,j_{1}} 
+....+\sum_{j_{s}=0}^{l_{s}}z^{k_{s,j_{s}}}F_{a_{s,j_{s}}}(z)A_{s,j_{s}}]^{-1}Cx,$ $z\in \Omega$ is holomorphic for every fixed element $x\in X.$
\item[(ii)] For every $k=(k_{1},...,k_{n})\in {\mathbb Z}^{n}$ and $p\in \circledast $, we have{\scriptsize
\begin{align*} & 
\sum_{l\in {\mathbb Z}^{n}; k-l\in D'}r_{1}^{k_{1}-l_{1}}\cdot ... \cdot r_{n}^{k_{n}-l_{n}}\sup_{|w_{1}|=r_{1};...;|w_{n}|=r_{n}} \\  &  \times  p\Biggl( \Biggl[B +\sum_{j_{1}=0}^{l_{1}}w^{k_{1,j_{1}}}F_{a_{1,j_{1}}}(w)A_{1,j_{1}} 
+....+\sum_{j_{s}=0}^{l_{s}}w^{k_{s,j_{s}}}F_{a_{s,j_{s}}}(w)A_{s,j_{s}}\Biggr]^{-1}Cf(l)\Biggr) <+\infty
\end{align*}}and{\scriptsize
\begin{align*} & 
\sum_{l\in {\mathbb Z}^{n}; k-l\in D'}r_{1}^{k_{1}-l_{1}}\cdot ... \cdot r_{n}^{k_{n}-l_{n}}\sup_{|w_{1}|=r_{1};...;|w_{n}|=r_{n}}
\\ & \times  p\Biggl(W\Biggl[B +\sum_{j_{1}=0}^{l_{1}}w^{k_{1,j_{1}}}F_{a_{1,j_{1}}}(w)A_{1,j_{1}} 
+....+\sum_{j_{s}=0}^{l_{s}}w^{k_{s,j_{s}}}F_{a_{s,j_{s}}}(w)A_{s,j_{s}}\Biggr]^{-1}Cf(l)\Biggr)<+\infty ,
\end{align*}}for every operator $W$ from the set $\{B,A_{1,0},...,A_{1,l_{1}},..., A_{s,0},...,A_{s,l_{s}}\}.$
\item[(iii)] For every $k=(k_{1},...,k_{n})\in {\mathbb Z}^{n}$, $ i\in {\mathbb N}_{s}$, $0\leq w\leq l_{i}$ and $p\in \circledast $, we have{\scriptsize
\begin{align*} & 
\sum_{l\in {\mathbb Z}^{n}; k-l\in D_{i}}\Bigl| a_{i,s}(k-l)\Bigr| \sum_{m\in {\mathbb Z}^{n}; l-m\in D'}r_{1}^{l_{1}-m_{1}}\cdot ... \cdot r_{n}^{l_{n}-m_{n}}\sup_{|w_{1}|=r_{1};...;|w_{n}|=r_{n}} \\  &    p\Biggl( \Biggl[B +\sum_{j_{1}=0}^{l_{1}}w^{k_{1,j_{1}}}F_{a_{1,j_{1}}}(w)A_{1,j_{1}} 
+....+\sum_{j_{s}=0}^{l_{s}}w^{k_{s,j_{s}}}F_{a_{s,j_{s}}}(w)A_{s,j_{s}}\Biggr]^{-1}Cf(m)\Biggr) <+\infty,
\end{align*}}and{\scriptsize
\begin{align*} & 
\sum_{l\in {\mathbb Z}^{n}; k-l\in D_{i}}\Bigl| a_{i,s}(k-l)\Bigr| \sum_{m\in {\mathbb Z}^{n}; l-m\in D'}r_{1}^{l_{1}-m_{1}}\cdot ... \cdot r_{n}^{l_{n}-m_{n}}\sup_{|w_{1}|=r_{1};...;|w_{n}|=r_{n}} \\  &    p\Biggl(W\Biggl[B +\sum_{j_{1}=0}^{l_{1}}w^{k_{1,j_{1}}}F_{a_{1,j_{1}}}(w)A_{1,j_{1}} 
+....+\sum_{j_{s}=0}^{l_{s}}w^{k_{s,j_{s}}}F_{a_{s,j_{s}}}(w)A_{s,j_{s}}\Biggr]^{-1}Cf(m)\Biggr)<+\infty ,
\end{align*}}for every operator $W$ from the set $\{B,A_{1,0},...,A_{1,l_{1}},..., A_{s,0},...,A_{s,l_{s}}\}.$
\item[(iv)] The multidimensional $Z$-transform of sequence $(f(k))_{k\in {\mathbb Z}^{n}}$ exists in the region $\Omega ,$ he multidimensional $Z$-transform of sequence $(a_{i,w}(k))_{k\in D_{i}}$ exists in the region $\Omega $ for all $ i\in {\mathbb N}_{s}$ and $0\leq w\leq l_{i}$,
the multidimensional $Z$-transform of sequence $(R(k))_{k\in D'}$, where $R(k)$ is defined by \eqref{gevolt} for all $k\in D',$ exists in the region $\Omega ,$ and the multidimensional $Z$-transform of sequence $(WR(k))_{k\in D'}$ in $L(X)$ exists in the region $\Omega ,$ for every operator $W$ from the set $\{B,A_{1,0},...,A_{1,l_{1}},..., A_{s,0},...,A_{s,l_{s}}\}.$
\end{itemize} 
Then the problem \eqref{afro} has a solution given by $u(k)= (R\ast_{(D',{\mathbb Z}^{n})} f)(k),$ $k\in {\mathbb Z}^{n}.$ 
\end{thm}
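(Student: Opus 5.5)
The plan follows the proof of Theorem \ref{moves} step by step, with the resolvent-type operator $[\sum_{j\in D}z^{j}A_{j}]^{-1}C$ replaced by
$$
P(z):=\Bigl[B +\sum_{j_{1}=0}^{l_{1}}z^{k_{1,j_{1}}}F_{a_{1,j_{1}}}(z)A_{1,j_{1}}+\ldots +\sum_{j_{s}=0}^{l_{s}}z^{k_{s,j_{s}}}F_{a_{s,j_{s}}}(z)A_{s,j_{s}}\Bigr]^{-1}C .
$$
(The conclusion should read ``problem \eqref{problem}''; the reference to \eqref{afro} is evidently a misprint.)

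\textbf{Step 1: $u$ is well-defined and lies in every domain.} Since $X$ is barreled, $L(X)$ is an SCLCS, and by (i) the map $z\mapsto P(z)\in L(X)$ is holomorphic on $\Omega$. Hence $R(k)$ in \eqref{gevolt} is well-defined. The Cauchy integral gives $p(R(k-l)f(l))\leq r_{1}^{k_{1}-l_{1}}\cdots r_{n}^{k_{n}-l_{n}}\sup_{|w_{i}|=r_{i}}p(P(w)f(l))$. By the first estimate in (ii), the series $u(k)=\sum_{l;\,k-l\in D'}R(k-l)f(l)$ converges absolutely.

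Apply the same bound to $WP(w)f(l)$. Using the second estimate in (ii), the closedness of $W$, and the interchange of $W$ with the contour integral and the sum, we get $u(k)\in D(W)$ and $Wu(k)=\sum R_{W}(k-l)f(l)$, where $R_{W}$ is defined by \eqref{gevolt} with $P$ replaced by $WP$. Condition (iii) gives the absolute convergence of the double series defining $(a_{i,w}\ast_{(D_{i},{\mathbb Z}^{n})}u)(k+k_{i,w})$ and $(a_{i,w}\ast_{(D_{i},{\mathbb Z}^{n})}Wu)(k+k_{i,w})$. Closedness of $A_{i,w}$ then lets us move $A_{i,w}$ inside these convolutions, so $u$ satisfies the regularity part of Definition \ref{diuke}.

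\textbf{Step 2: $Z$-transforms and the convolution theorem.} By (iv) and Theorem \ref{seka}, which identifies $R(\cdot)$ as the inverse transform so that $F_{R}(z)x=P(z)x$, the proof of Theorem \ref{dsp} applied in $L(X)$ with vector argument $f$ yields
$$
F_{u}(z)=P(z)F_{f}(z),\qquad F_{Wu}(z)=WP(z)F_{f}(z),\qquad z\in\Omega .
$$
For the latter, use that $F_{WR}$ exists and equals $WF_{R}$ by closedness of $W$. Since $F_{a_{i,w}}$ exists on $\Omega$ by (iv), Theorem \ref{dsp} gives $F_{a_{i,w}\ast u}=F_{a_{i,w}}F_{u}$, and Proposition \ref{manulakis} with $D={\mathbb Z}^{n}$ (so that $D\setminus(a+D)=\emptyset$) gives $F_{(a_{i,w}\ast u)(\cdot+k_{i,w})}=z^{k_{i,w}}F_{a_{i,w}}F_{u}$. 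Closedness again gives $F_{A_{i,w}(\ldots)}=A_{i,w}F_{(\ldots)}$.

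\textbf{Step 3: identification.} Summing by Proposition \ref{linear}, the $Z$-transform of the left-hand side of \eqref{problem} equals
$$
\Bigl[B+\sum_{i,w}z^{k_{i,w}}F_{a_{i,w}}(z)A_{i,w}\Bigr]P(z)F_{f}(z)=CF_{f}(z)=F_{Cf}(z),
$$
the intended right-hand side (taking $C={\rm I}$ if $f$ appears without $C$). Uniqueness of the transform via Theorem \ref{seka} then yields \eqref{problem} pointwise.

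\textbf{Main obstacle.} The delicate step is justifying that $A_{i,w}$ commutes with both the convolution and the $Z$-transform, which requires exchanging a closed operator with absolutely convergent double series. I would handle this through the second estimate in (iii) together with the fact that closed operators commute with absolutely convergent series whose images also converge absolutely. That fact is proved by passing to finite partial sums and then using sequential completeness and closedness.
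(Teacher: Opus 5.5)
\textbf{The gap is in Step 2.} Your outline follows the route the paper intends (its proof is omitted as ``very similar'' to that of Theorem~\ref{moves}), but the step that carries the argument fails. You claim that Theorem~\ref{seka} ``identifies $R(\cdot)$ as the inverse transform so that $F_{R}(z)x=P(z)x$''. Theorem~\ref{seka} runs the other way. Applied to $R$, it only gives $R(k)=(2\pi i)^{-n}\oint\cdots\oint z_{1}^{k_{1}-1}\cdots z_{n}^{k_{n}-1}F_{R}(z)\,dz_{1}\cdots dz_{n}$ for $k\in D'$. Comparing this with \eqref{gevolt}, you learn only that $F_{R}$ and $P$ have the same Laurent coefficients on the torus $|z_{i}|=r_{i}$ at indices in $D'$. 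The coefficients of $F_{R}$ outside $D'$ are zero by definition, but nothing in (i)--(iv) forces the Laurent coefficients of $P$ outside $D'$ to vanish. So $F_{R}=P$ is unjustified, and with it $F_{u}=PF_{f}$ and all of Step 3.

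\textbf{The statement is in fact false as printed.} Take $n=1$, $X={\mathbb C}$, $s=1$, $l_{1}=0$, $D_{1}=\{0\}$, $a_{1,0}(0)=1$, $A_{1,0}=0$, $k_{1,0}=0$, $B=C={\rm I}$, $r_{1}=1$, $\Omega={\mathbb C}\setminus\{0\}$, $f(k)=e^{-k^{2}}$ and $D'=\{1\}$. Then $P\equiv{\rm I}$ and $R(1)=(2\pi i)^{-1}\oint_{|z|=1}dz=0$. All of (i)--(iv) hold trivially, yet $u=R\ast_{(D',{\mathbb Z})}f\equiv 0$ does not solve $u(k)=f(k)$. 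A less degenerate failure uses $B=-2{\rm I}$, $A_{1,0}={\rm I}$, $k_{1,0}=1$, which gives the equation $u(k+1)-2u(k)=f(k)$ with $P(z)=(z-2)^{-1}$. With $r_{1}=1$ and $D'={\mathbb N}_{0}$, the Laurent coefficients of $P$ on $|z|=1$ live on $-{\mathbb N}_{0}$. So $R$ retains only $R(0)=-1/2$, and $u=-f/2$ is not a solution.

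\textbf{The paper has the same flaw.} Its proof of Theorem~\ref{moves}, on which this one is modelled, makes exactly the same leap when it derives \eqref{prcko} from Theorem~\ref{seka} and the proof of Theorem~\ref{dsp}. So this is a defect of the source argument, not a deviation on your part. The point you flag as the main obstacle (closed operators versus absolutely convergent series) is routine by comparison.

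\textbf{The repair.} You need an extra hypothesis, for instance
\[
\oint_{|z_{1}|=r_{1}}\cdots\oint_{|z_{n}|=r_{n}}z_{1}^{k_{1}-1}\cdots z_{n}^{k_{n}-1}P(z)x\,dz_{1}\cdots dz_{n}=0,\quad k\in{\mathbb Z}^{n}\setminus D',\ x\in X.
\]
Then take the Laurent expansion of the holomorphic map $z\mapsto P(z)x$ on a product of annuli around the torus contained in $\Omega$. It gives $P(z)x=\sum_{k\in D'}R(k)xz^{-k}=F_{R}(z)x$ there, which is all that your Steps 2--3 and the final uniqueness appeal to Theorem~\ref{seka} need. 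This condition is automatic for $D'={\mathbb Z}^{n}$. For $D'={\mathbb N}_{0}^{n}$ it holds if $P(\cdot)x$ extends holomorphically and boundedly to a neighbourhood of $\{|z_{1}|\geq r_{1}\}\times\cdots\times\{|z_{n}|\geq r_{n}\}$: push each contour to infinity. With this added, the rest of your outline goes through.
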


Concerning the uniqueness of solutions to problem \eqref{problem}, our basic assumption will be that exists an open non-empty set $\Omega \subseteq {\mathbb C}^{n}$ such that $F_{a_{i,w}}(z)$ exists for all $z\in \Omega$, $ i\in {\mathbb N}_{s}$ and $0\leq w\leq l_{i}$. If such a set $\Omega$ is already determined, then we introduce the space
$
uniq_{\Omega}(X;\eqref{problem})$ as the set of all sequences $u : {\mathbb Z}^{n} \rightarrow X$ such that $F_{u}(z) $ and $F_{Wu}(z)$ are well-defined for all $z\in \Omega$ and for every operator $W$ from the set $\{B,A_{1,0},...,A_{1,l_{1}},..., A_{s,0},...,A_{s,l_{s}}\}.$
It is clear that $uniq_{\Omega}(X;\eqref{problem})$ is a vector space with the usual operations. 

We can simply deduce the following analogue of Theorem \ref{iniq}:

\begin{thm}\label{iniqvolt}
Suppose that condition \emph{(C1)} holds and there exists an open non-empty set  $\Omega \subseteq {\mathbb C}^{n}$ such that exists an open non-empty set $\Omega \subseteq {\mathbb C}^{n}$ such that $F_{a_{i,w}}(z)$ exists for all $z\in \Omega$, $ i\in {\mathbb N}_{s}$ and $0\leq w\leq l_{i}$. If the operator $B +\sum_{j_{1}=0}^{l_{1}}z^{k_{1,j_{1}}}F_{a_{1,j_{1}}}(z)A_{1,j_{1}} 
+....+\sum_{j_{s}=0}^{l_{s}}z^{k_{s,j_{s}}}F_{a_{s,j_{s}}}(z)A_{s,j_{s}}$ is injective for all $z\in \Omega ,$ then there exists a unique strong solution of problem \eqref{problem} which belongs to the space $uniq_{\Omega}(X;\eqref{problem}).$  
\end{thm}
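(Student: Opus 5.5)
The plan mirrors the proof of Theorem \ref{iniq}: the statement is really a uniqueness assertion, so I show that two strong solutions in $uniq_{\Omega}(X;\eqref{problem})$ coincide. Let $u_{1},u_{2}$ be strong solutions of \eqref{problem} in $uniq_{\Omega}(X;\eqref{problem})$ and set $u:=u_{1}-u_{2}$. By linearity of the convolution products and Proposition \ref{linear}, $u$ is a strong solution of \eqref{problem} with $f\equiv 0$. Since $uniq_{\Omega}(X;\eqref{problem})$ is a vector space, $u$ also lies in it. Hence $F_{u}(z)$ and $F_{Wu}(z)$ exist for all $z\in \Omega$ and every $W\in\{B,A_{1,0},\dots ,A_{s,l_{s}}\}$.

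Next I apply the $Z$-transform term by term on $\Omega$. Fix $i$ and $0\leq w\leq l_{i}$. Since $F_{a_{i,w}}$ and $F_{A_{i,w}u}$ both exist on $\Omega$, Theorem \ref{dsp} with $D'=D_{i}$, $D''={\mathbb Z}^{n}$ shows three things:
\begin{itemize}
\item $a_{i,w}\ast_{(D_{i},{\mathbb Z}^{n})}A_{i,w}u$ is well-defined;
\item its $Z$-transform exists on $\Omega$;
\item this transform equals $F_{a_{i,w}}(z)F_{A_{i,w}u}(z)$.
\end{itemize}
Closedness of $A_{i,w}$, together with absolute convergence of the defining series, gives $A_{i,w}(a_{i,w}\ast u)(k)=(a_{i,w}\ast A_{i,w}u)(k)$. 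For the same reason $F_{A_{i,w}u}(z)=A_{i,w}F_{u}(z)$, with $F_{u}(z)\in D(A_{i,w})$. The translation by $k_{i,w}$ is handled by Proposition \ref{manulakis} with $D={\mathbb Z}^{n}$, so $a+D=D$ and the correction sum vanishes; this multiplies the transform by $z^{k_{i,w}}$. Similarly $F_{Bu}(z)=BF_{u}(z)$.

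Summing with Proposition \ref{linear}, the transform of the left side of \eqref{problem} equals
$$\Bigl[B+\sum_{i=1}^{s}\sum_{w=0}^{l_{i}}z^{k_{i,w}}F_{a_{i,w}}(z)A_{i,w}\Bigr]F_{u}(z),\quad z\in\Omega .$$
This left side is identically zero, so its transform vanishes. Injectivity of the bracketed operator then yields $F_{u}(z)=0$ for all $z\in\Omega$. Since $\Omega$ is open and nonempty, it contains a closed polytorus-neighbourhood around some point with all coordinates nonzero. (If $\Omega$ meets coordinate hyperplanes, I shrink it to a small polydisc avoiding them; $z^{-k}$ must be defined there anyway.) Theorem \ref{seka} applied on this polytorus gives $u(k)=0$ for every $k$, so $u_{1}=u_{2}$.

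The main obstacle is justifying the interchange of the closed operators $A_{i,w}$, $B$ with the infinite sums defining convolutions and $Z$-transforms. I would handle it as follows. Partial sums $S_{N}$ of $\sum f(k)z^{-k}$ converge to $F_{u}(z)$, and $A S_{N}=\sum_{|k|\leq N} Au(k)z^{-k}$ converges to $F_{Au}(z)$; closedness then gives $F_{u}(z)\in D(A)$ and $AF_{u}(z)=F_{Au}(z)$. The convolutions are treated identically using the absolute convergence from the strong-solution definition. A secondary point is that the transform of the translated convolution must exist. It follows from the estimate in the proof of Proposition \ref{manulakis}, since the domain is all of ${\mathbb Z}^{n}$.

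As stated, the theorem also speaks of existence. I would read the statement as asserting that at most one such solution exists, matching Theorem \ref{iniq}. I would note in a remark that existence follows under the hypotheses of Theorem \ref{movesvolt}.
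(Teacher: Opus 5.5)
Your overall route (subtract two solutions, transform the homogeneous equation term by term via Theorem \ref{dsp}, Proposition \ref{manulakis} and closedness, then use injectivity to get $F_u\equiv 0$ on $\Omega$) is exactly the argument the paper gives for Theorem \ref{iniq}, of which this theorem is declared the analogue. Those steps are fine. Your reading of the statement as a uniqueness assertion also matches how the paper treats Theorem \ref{iniq}.

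\textbf{The gap is in the last step.} You claim that an open nonempty $\Omega$ contains a neighbourhood of some polytorus, and this is false. For instance, a small ball around $(2,\dots,2)$ contains no set $\{|z_1|=r_1\}\times\cdots\times\{|z_n|=r_n\}$, since such a set contains $(-r_1,\dots,-r_n)$. So Theorem \ref{seka} cannot be applied on $\Omega$ or on any subset of it. Knowing $F_u=0$ only on $\Omega$ does not by itself let you read off the Laurent coefficients.

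\textbf{The missing idea is to enlarge the domain before inverting.} Condition \eqref{est12} depends only on $|z_1|,\dots,|z_n|$, as the paper remarks after Definition \ref{manule}. Take a polydisc $P\subseteq\Omega$ of radius $\epsilon$ centred at $z^0$, with $0<\epsilon<\min_i|z_i^0|$. Then $F_u$ is defined on
$\widetilde P:=\{(e^{i\varphi_1}w_1,\dots,e^{i\varphi_n}w_n): w\in P,\ \varphi\in\mathbb{R}^n\}$. This set has three useful properties:
\begin{itemize}
\item it is open;
\item it is connected, being a continuous image of $\mathbb{R}^n\times P$;
\item it contains $\{z: \bigl|\,|z_i|-|z_i^0|\,\bigr|<\epsilon,\ 1\le i\le n\}$, an open neighbourhood of the torus $|z_i|=|z_i^0|$. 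To see this, rotate each $z_i$ to the argument of $z_i^0$.
\end{itemize}
By the paper's remark after Definition \ref{manule}, $F_u$ is holomorphic on the open set $\widetilde P$, and it vanishes on the open subset $P$. Apply the identity theorem to $\langle x^*,F_u\rangle$ for every $x^*\in X^*$; since $X^*$ separates points (Hahn--Banach), this gives $F_u\equiv 0$ on $\widetilde P$. Now Theorem \ref{seka} with $r_i=|z_i^0|$ yields $u\equiv 0$.

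The paper's proof of Theorem \ref{iniq} passes over this point with ``Hence, $u\equiv 0$''. So this is precisely what you need to supply, and your stated justification does not supply it.
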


We can proive many illsutrative applications of Theorem \ref{movesvolt} and Theorem \ref{iniqvolt}; cf. also Example \ref{parkw} below.

\subsection{The abstract fractional difference equations with generalized Weyl $(a,m)$-fractional derivatives}\label{ljeb}

In this subsection, we 
consider the following abstract multi-term fractional difference equation with generalized Weyl $(a,m)$-fractional derivatives:{\small
\begin{align}\label{qad}
A_{s}\Bigl(\Delta_{W,a_{s},m_{s}}u\Bigr)\bigl(k+k_{s}\bigr)+...+A_{1}\Bigl(\Delta_{W,a_{1},m_{1}}u\Bigr)\bigl(k+k_{1}\bigr)+A_{0}u\bigl(k+k_{0}\bigr)=Cf(k),\ k\in {\mathbb Z},
\end{align}}where 
the following condition holds:
\begin{itemize}
\item[(C2)]
$s\in {\mathbb N}$, $A_{1},...,A_{s}$ are closed linear operators on an SCLCS $X$, $k_{0},k_{1},...,k_{s}\in {\mathbb Z},$ $ f: {\mathbb Z} \rightarrow X$ is a given sequence and $\Delta_{W,a,m}u$ is the generalized Weyl $(a,m)$-fractional derivative of $u(\cdot).$
\end{itemize}

First of all, Theorem \ref{dsp} and Proposition \ref{manulakis} together imply that, under certain logical assumptions, we have
$F_{\Delta_{W,a}u}(z)=F_{a}(z) F_{u}(z),$
$z\in \Omega$ and 
\begin{align}\label{cos}
F_{\Delta_{W,a,m}u}(z)=\sum_{j=0}^{m}(-1)^{m-j}\binom{m}{j}z^{j}F_{a}(z)F_{u}(z),\quad z\in \Omega.
\end{align}

Keeping in mind the equality \eqref{cos},
we can perform the vector-valued $Z$-transform to the both sides of \eqref{qad} to obtain that 
\begin{align*}
\Biggl[ \sum_{w=1}^{s}\sum_{j_{w}=0}^{m_{w}}(-1)^{m_{w}-j_{w}}\binom{m_{w}}{j_{w}}z^{k_{w}+j_{w}}F_{a_{w}}(z)A_{w}+z^{k_{0}}A_{0}\Biggr] \cdot F_{u}(z)=CF_{f}(z),\quad z\in \Omega.
\end{align*}
Hence, 
\begin{align*}
F_{u}(z)=\Biggl[ \sum_{w=1}^{s}\sum_{j_{w}=0}^{m_{w}}(-1)^{m_{w}-j_{w}}\binom{m_{w}}{j_{w}}z^{k_{w}+j_{w}}F_{a_{w}}(z)A_{w}+z^{k_{0}}A_{0}\Biggr] ^{-1}C \cdot F_{f}(z),\quad z\in \Omega ,
\end{align*}
and a solution $u :  {\mathbb Z} \rightarrow X$ of problem \eqref{qad} should be given by the following formula:{\small
\begin{align*}
u(k)= \Biggl(Z^{-1}\Biggl[ \Biggl(\sum_{w=1}^{s}\sum_{j_{w}=0}^{m_{w}}(-1)^{m_{w}-j_{w}}\binom{m_{w}}{j_{w}}z^{k_{w}+j_{w}}F_{a_{w}}(z)A_{w}+z^{k_{0}}A_{0}\Biggr) ^{-1}C\Biggr]  \ast_{(D',{\mathbb Z})} f\Biggr)(k),
\end{align*}}
for any $k\in {\mathbb Z},$ where $Z^{-1}$ denotes the inverse $Z$-transform.

We will use the following notion:

\begin{defn}\label{evroliga}
By a strong solution of problem \eqref{qad}, we mean any sequence $u : {\mathbb Z} \rightarrow X$ such that, for every $w\in {\mathbb N}_{s},$ we have $R(u)\subseteq D(A_{w}) \cap D(A_{0})$ as well as that the term $\Delta_{W,a_{w}}(A_{w}u)$ is well-defined and \eqref{qad} holds for all $k\in {\mathbb Z}.$
\end{defn}

Now we can clarify the following result, which can be deduced in a similar way as Theorem \ref{moves}; let us only note that we need condition (iii) here in order to prove that the series which defines the term $\Delta_{W,a_{w}}(A_{w}u)$ converges absolutely ($w\in {\mathbb N}_{s}$):

\begin{thm}\label{movesvoltw}
Suppose that $X$ is a barreled \emph{SCLCS}, \emph{(C2)} holds, $f: {\mathbb Z} \rightarrow X$, $\emptyset \neq \Omega \subseteq {\mathbb C}$, $r>0$ and $\Omega$ contains an open neighborhood of the set $\{z\in {\mathbb C} : |z|=r\} $ in ${\mathbb C}.$ 
Suppose, further, that there exists a set $\emptyset \neq D' \subseteq {\mathbb Z}$ such that the following conditions hold:
\begin{itemize}
\item[(i)] $[\sum_{w=1}^{s}\sum_{j_{w}=0}^{m_{w}}(-1)^{m_{w}-j_{w}}\binom{m_{w}}{j_{w}}z^{k_{w}+j_{w}}F_{a_{w}}(z)A_{w}+z^{k_{0}}A_{0}] ^{-1}C\in L(X),$ $z\in \Omega$ and the mapping $z\mapsto [\sum_{w=1}^{s}\sum_{j_{w}=0}^{m_{w}}(-1)^{m_{w}-j_{w}}\binom{m_{w}}{j_{w}}z^{k_{w}+j_{w}}F_{a_{w}}(z)A_{w}+z^{k_{0}}A_{0}] ^{-1}Cx,$ $z\in \Omega$ is holomorphic for every fixed element $x\in X.$
\item[(ii)] For every $k\in {\mathbb Z}$ and $p\in \circledast $, we have{\scriptsize
\begin{align*} & 
\sum_{l\in {\mathbb Z}; k-l\in D'}r^{k-l}\sup_{|\lambda|=r}   p\Biggl( \Biggl[\sum_{w=1}^{s}\sum_{j_{w}=0}^{m_{w}}(-1)^{m_{w}-j_{w}}\binom{m_{w}}{j_{w}}\lambda^{k_{w}+j_{w}}F_{a_{w}}(\lambda)A_{w}+\lambda^{k_{0}}A_{0}\Biggr]^{-1}Cf(l)\Biggr) <+\infty
\end{align*}}and{\scriptsize
\begin{align*} & 
\sum_{l\in {\mathbb Z}; k-l\in D'}r^{k-l}\sup_{|\lambda|=r} 
p\Biggl(W\Biggl[\sum_{w=1}^{s}\sum_{j_{w}=0}^{m_{w}}(-1)^{m_{w}-j_{w}}\binom{m_{w}}{j_{w}}\lambda^{k_{w}+j_{w}}F_{a_{w}}(\lambda)A_{w}+\lambda^{k_{0}}A_{0}\Biggr]^{-1}Cf(l)\Biggr)<+\infty ,
\end{align*}}for every operator $W$ from the set $\{A_{0},...,A_{s}\}.$
\item[(iii)] For every $k\in {\mathbb Z}$, $w\in {\mathbb N}_{s}$ and $p\in \circledast $, we have{\scriptsize
\begin{align*} & 
\sum_{s=0}^{+\infty}\Bigl| a_{w}(s)\Bigr| \sum_{l\in {\mathbb Z}; k-s-l\in D'}  r^{k-l-s}  \\  &  \times \sup_{|\lambda|=r}  p\Biggl( W\Biggl[\sum_{w=1}^{s}\sum_{j_{w}=0}^{m_{w}}(-1)^{m_{w}-j_{w}}\binom{m_{w}}{j_{w}}\lambda^{k_{w}+j_{w}}F_{a_{w}}(z)A_{w}+\lambda^{k_{0}}A_{0}\Biggr]^{-1}Cf(l)\Biggr) <+\infty,
\end{align*}}for every operator $W$ from the set $\{{\rm I},A_{0},...,A_{s}\}.$
\item[(iv)] The $Z$-transform of sequence $(f(k))_{k\in {\mathbb Z}}$ exists in the region $\Omega ,$ the $Z$-transform of sequence $(a_{w}(k))_{k\in {\mathbb N}_{0}}$ exists in the region $\Omega $ for all $w\in {\mathbb N}_{s}$,
the $Z$-transform of sequence $(S(k))_{k\in D'}$, where $S(k)$ is defined by 
{\small
\begin{align*}\notag &
S(k)x=\frac{1}{ 2\pi i }\oint_{|z |=r } z ^{k -1} 
\Biggl(\sum_{w=1}^{s}\sum_{j_{w}=0}^{m_{w}}(-1)^{m_{w}-j_{w}}\binom{m_{w}}{j_{w}}z^{k_{w}+j_{w}}F_{a_{w}}(z)A_{w}+z^{k_{0}}A_{0}\Biggr) ^{-1}Cx\, dz,
\end{align*}}for any $x\in X$ and $k\in D',$
exists in the region $\Omega ,$ and the multidimensional $Z$-transform of sequence $(WS(k))_{k\in D'}$ in $L(X)$ exists in the region $\Omega ,$ for every operator $W$ from the set $\{A_{0},...,A_{s}\}.$
\end{itemize} 
Then the problem \eqref{qad} has a strong solution given by $u(k)= (S\ast_{(D',{\mathbb Z})} f)(k),$ $k\in {\mathbb Z}.$ 
\end{thm}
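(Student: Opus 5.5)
We follow the scheme of the proof of Theorem \ref{moves}, now in one dimension and with the symbol
$$
P(z):=\sum_{w=1}^{s}\sum_{j_{w}=0}^{m_{w}}(-1)^{m_{w}-j_{w}}\binom{m_{w}}{j_{w}}z^{k_{w}+j_{w}}F_{a_{w}}(z)A_{w}+z^{k_{0}}A_{0}.
$$

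\emph{Well-definedness and ranges.} Since $X$ is barreled, $L(X)$ is an SCLCS. By (i), $z\mapsto P(z)^{-1}C\in L(X)$ is holomorphic on $\Omega$, so $S(k)\in L(X)$ is well defined by the contour integral. For each $p\in\circledast$ we have $p(S(k-l)f(l))\leq r^{k-l}\sup_{|\lambda|=r}p(P(\lambda)^{-1}Cf(l))$. Hence the first estimate in (ii) gives absolute convergence of $u(k)=\sum_{l;\,k-l\in D'}S(k-l)f(l)$.

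The closedness of each $A_{w}$ allows $A_{w}$ to be pulled inside the contour integral. The second estimate in (ii) then gives absolute convergence of $\sum_{l}A_{w}S(k-l)f(l)$. Closedness once more yields $u(k)\in D(A_{w})$ and $A_{w}u(k)=\sum_{l}A_{w}S(k-l)f(l)$ for $0\leq w\leq s$, so $R(u)\subseteq D(A_{w})\cap D(A_{0})$.

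Next we check that $\Delta_{W,a_{w}}(A_{w}u)(k)=\sum_{t\geq 0}a_{w}(t)A_{w}u(k-t)$ converges absolutely. Bounding $p(A_{w}u(k-t))$ by the inner sum and using (iii) with $W=A_{w}$ does this. The case $W={\rm I}$ of (iii) likewise gives absolute convergence of $\Delta_{W,a_{w}}u$. The strong-solution requirements of Definition \ref{evroliga} are thus met, except for the equation itself.

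\emph{Transform identity.} By (iv) and the proof of Theorem \ref{dsp} (with $D''={\mathbb Z}$), $F_{u}$ exists on $\Omega$ and $F_{u}=F_{S}F_{f}$. Theorem \ref{seka} identifies $F_{S}(z)=P(z)^{-1}C$ on $\Omega$, assuming the Laurent coefficients of $P(\cdot)^{-1}C$ outside $D'$ vanish, as implicitly required by (iv). Thus $F_{u}(z)=P(z)^{-1}CF_{f}(z)$. Similarly $F_{A_{w}u}=F_{A_{w}S}F_{f}$ exists, and, using closedness to commute $A_{w}$ with the absolutely convergent series, $F_{A_{w}u}=A_{w}F_{u}$.

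Apply Theorem \ref{dsp} to $a_{w}$ and $A_{w}u$. This gives $F_{\Delta_{W,a_{w}}A_{w}u}=F_{a_{w}}A_{w}F_{u}$, and $\Delta_{W,a_{w}}A_{w}u=A_{w}\Delta_{W,a_{w}}u$ again by closedness. The forward difference $\Delta^{m_{w}}$ is a finite combination of shifts on ${\mathbb Z}$, where $a+D=D$. Proposition \ref{manulakis} therefore contributes the factor $z^{j}$ with no boundary terms, and the outer shift by $k_{w}$ contributes $z^{k_{w}}$. This yields \eqref{cos} multiplied by $A_{w}$ and $z^{k_{w}}$. Summing via Proposition \ref{linear}, the $Z$-transform of the left side of \eqref{qad} equals $P(z)F_{u}(z)=P(z)P(z)^{-1}CF_{f}(z)=CF_{f}(z)=F_{Cf}(z)$.

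\emph{Conclusion.} The inversion formula of Theorem \ref{seka} gives uniqueness of the $Z$-transform, so \eqref{qad} holds for all $k\in{\mathbb Z}$. The main obstacle is the bookkeeping in the transform identity. One must justify that every intermediate sequence (each $\Delta_{W,a_{w}}A_{w}u$ and its shifts) has a $Z$-transform on $\Omega$ with absolute convergence, which requires combining (iii) and (iv). One must also justify exchanging $A_{w}$ with the infinite sums, which needs closedness together with convergence of both $\sum x_{n}$ and $\sum A_{w}x_{n}$. For the latter we would argue term by term via (ii) and (iii), as indicated above.
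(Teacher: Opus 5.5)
Your route is the paper's: its proof of Theorem \ref{movesvoltw} is only the remark that one argues as for Theorem \ref{moves}, with (iii) supplying absolute convergence of the series defining $\Delta_{W,a_{w}}(A_{w}u)$. That is exactly your outline.

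\textbf{The gap.} The genuine problem is the step you flag yourself, $F_{S}(z)=P(z)^{-1}C$. It is \emph{not} implied by (iv). Condition (iv) only asks that $\sum_{k\in D'}S(k)z^{-k}$ converge. Theorem \ref{seka} then shows that this series and $P(\cdot)^{-1}C$ have the same Laurent coefficients at every $k\in D'$. The coefficients $\frac{1}{2\pi i}\oint_{|z|=r}z^{k-1}P(z)^{-1}C\,dz$ with $k\notin D'$ are not constrained by any hypothesis.

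Without their vanishing the statement is false. Take $X={\mathbb C}$, $C=1$, $s=m_{1}=1$, $a_{1}(0)=1$ and $a_{1}(t)=0$ for $t\geq 1$, $A_{1}=1$, $A_{0}=0$, $k_{1}=0$, $r=2$, $\Omega=\{3/2<|z|<5/2\}$, $f(k)=e^{-k^{2}}$ and $D'=\{0\}$. Then your symbol is $P(z)=z-1$, and conditions (i)--(iv) are all trivially satisfied. But $S(0)=\frac{1}{2\pi i}\oint_{|z|=2}\frac{dz}{z(z-1)}=0$, so $u\equiv 0$, which does not solve $u(k+1)-u(k)=e^{-k^{2}}$. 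With $D'={\mathbb N}_{0}$ or $D'={\mathbb Z}$ the same data do give a solution.

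The paper's argument uses this identification tacitly, through the proof of Theorem \ref{moves}. You should state it as an explicit extra hypothesis, e.g.\ $S(k)=0$ for $k\in{\mathbb Z}\setminus D'$ (automatic when $D'={\mathbb Z}$). Then $F_{S}=P(\cdot)^{-1}C$ on an annulus around $|z|=r$ contained in $\Omega$. That is all the final appeal to Theorem \ref{seka} requires, and the rest of your argument goes through.

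\textbf{A smaller point.} Pulling the closed operator $A_{w}$ inside the contour integral needs integrability of $\lambda\mapsto A_{w}P(\lambda)^{-1}Cf(l)$ on $|\lambda|=r$. Condition (ii) only bounds its supremum. You can bypass this:
\begin{itemize}
\item Condition (iv) presupposes $A_{w}S(t)\in L(X)$.
\item For fixed $z\in\Omega$, the seminorm $x\mapsto\sum_{t\in D'}p(A_{w}S(t)x)|z|^{-t}$ is finite and lower semicontinuous, hence continuous because $X$ is barreled.
\item Combined with the existence of $F_{f}(z)$, this gives $\sum_{l}p(A_{w}S(k-l)f(l))<\infty$ directly.
\end{itemize}
The same barreledness argument is also what actually justifies invoking ``the proof of Theorem \ref{dsp}'' for the operator-valued kernel $S$, since that theorem is stated only for scalar $a$.
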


Concerning the uniqueness of solutions to problem \eqref{qad}, our basic assumption will be that exists an open non-empty set $\Omega \subseteq {\mathbb C}$ such that $F_{a_{w}}(z)$ exists for all $z\in \Omega$ and $w \in {\mathbb N}_{s}.$ If such a set $\Omega$ is  determined, then we introduce the space
$
uniq_{\Omega}(X;\eqref{qad})$ as the set of all sequences $u : {\mathbb Z} \rightarrow X$ such that $F_{u}(z) $ and $F_{Wu}(z)$ are well-defined for all $z\in \Omega$ and for every operator $W$ from the set $\{ A_{ 0},...,A_{ s } \}.$
It is clear that $uniq_{\Omega}(X;\eqref{qad})$ is a vector space with the usual operations. 

Then the following holds:

\begin{thm}\label{iniqvoltw}
Suppose that condition \emph{(C2)} holds and there exists an open non-empty set  $\Omega \subseteq {\mathbb C}$ such that exists an open non-empty set $\Omega \subseteq {\mathbb C}$ such that $F_{a_{w}}(z)$ exists for all $z\in \Omega$ and $w \in {\mathbb N}_{s}.$ If the operator\\ $\sum_{w=1}^{s}\sum_{j_{w}=0}^{m_{w}}(-1)^{m_{w}-j_{w}}\binom{m_{w}}{j_{w}}z^{k_{w}+j_{w}}F_{a_{w}}(z)A_{w}+z^{k_{0}}A_{0}$ is injective for all $z\in \Omega ,$ then there exists a unique strong solution of problem \eqref{qad} which belongs to the space $uniq_{\Omega}(X;\eqref{qad}).$  
\end{thm}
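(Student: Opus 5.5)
The plan is to mimic the proof of Theorem \ref{iniq}. The statement, like Theorem \ref{iniq}, is really a uniqueness assertion: there is at most one strong solution of \eqref{qad} in $uniq_{\Omega}(X;\eqref{qad})$. Existence is supplied separately by Theorem \ref{movesvoltw}.

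Let $u_{1}$ and $u_{2}$ be two strong solutions in $uniq_{\Omega}(X;\eqref{qad})$, and set $u:=u_{1}-u_{2}$. Because $uniq_{\Omega}(X;\eqref{qad})$ is a vector space and \eqref{qad} is linear, $u$ is a strong solution of \eqref{qad} with $f\equiv 0$, and $F_{u}(z)$ and $F_{A_{w}u}(z)$ exist for every $z\in\Omega$ and $0\leq w\leq s$.

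\textbf{Step 1: transform the equation.} Apply the $Z$-transform to both sides of \eqref{qad}, term by term.

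For $w\in{\mathbb N}_{s}$, the definition of strong solution guarantees that $\Delta_{W,a_{w}}(A_{w}u)$ is well-defined. Since $F_{a_{w}}$ and $F_{A_{w}u}$ exist on $\Omega$, Theorem \ref{dsp} with $D'={\mathbb N}_{0}$ and $D''={\mathbb Z}$ gives
\[
F_{\Delta_{W,a_{w}}(A_{w}u)}(z)=F_{a_{w}}(z)F_{A_{w}u}(z),\quad z\in \Omega .
\]
Next use the expansion of $\Delta^{m_{w}}$ together with Proposition \ref{manulakis} and Proposition \ref{linear}. Since $a+{\mathbb Z}={\mathbb Z}$, no boundary terms appear, and we obtain formula \eqref{cos} for $A_{w}u$. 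The term $A_{0}u(k+k_{0})$ transforms to $z^{k_{0}}F_{A_{0}u}(z)$.

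Finally, closedness of $A_{w}$ gives $F_{A_{w}u}(z)=A_{w}F_{u}(z)$. Indeed, the partial sums of $\sum_{k}u(k)z^{-k}$ converge to $F_{u}(z)$, and $A_{w}$ applied to them converges to $F_{A_{w}u}(z)$. Summing all terms yields
\[
\Biggl[\sum_{w=1}^{s}\sum_{j_{w}=0}^{m_{w}}(-1)^{m_{w}-j_{w}}\binom{m_{w}}{j_{w}}z^{k_{w}+j_{w}}F_{a_{w}}(z)A_{w}+z^{k_{0}}A_{0}\Biggr]F_{u}(z)=0,\quad z\in\Omega.
\]

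\textbf{Step 2: conclude.} By the injectivity hypothesis, $F_{u}(z)=0$ for all $z\in\Omega$. Since $\Omega$ is open and nonempty, it contains a circle $\{|z|=r\}$ together with a neighbourhood: pick $z_{0}\in\Omega$, note that $F_{u}$ then exists on the whole circle $|z|=|z_{0}|$ by the rotation invariance remarked after Definition \ref{manule}, and take $r=|z_{0}|$. Theorem \ref{seka} then gives $u(k)=0$ for all $k$, so $u_{1}=u_{2}$.

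\textbf{Main obstacle.} The delicate point is justifying, for a bilateral sequence, that the transform of $\Delta_{W,a_{w}}(A_{w}u)$ exists and factorizes. Theorem \ref{dsp} needs absolute convergence of both series at the same $z$, and this is exactly what the membership $u\in uniq_{\Omega}(X;\eqref{qad})$ and the existence of $F_{a_{w}}$ on $\Omega$ provide. One must also check that applying Theorem \ref{seka} only needs the transform on a circle. If the neighbourhood of the circle is not contained in $\Omega$, I would instead argue directly: on $|z|=r$ the Fourier coefficients of $\theta\mapsto F_{u}(re^{i\theta})$ are $u(k)r^{-k}$, by the Fubini argument in the proof of Theorem \ref{seka}, and these all vanish.
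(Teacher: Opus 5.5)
Your route is the one the paper intends: the template of the proof of Theorem \ref{iniq}, with Theorem \ref{dsp}, Proposition \ref{manulakis} and \eqref{cos} handling the Weyl terms. Reading the statement as a uniqueness assertion is also right. Step 2, however, has a genuine gap. An open set $\Omega\subseteq{\mathbb C}$ need not contain any circle, and rotation invariance only shows that $F_{u}$ is \emph{defined} on $|z|=|z_{0}|$. Injectivity gives $F_{u}=0$ only on $\Omega$, i.e.\ on an arc of that circle. Theorem \ref{seka} then yields $u(k)=\frac{1}{2\pi i}\oint_{|z|=r}z^{k-1}F_{u}(z)\,dz$, which you cannot evaluate as $0$. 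Your fallback fails for the same reason: the Fourier coefficients of $\theta\mapsto F_{u}(re^{i\theta})$ vanish only if this function vanishes for \emph{all} $\theta$.

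The missing idea is analytic continuation. The set $I$ of radii $\rho>0$ with $\sum_{k\in{\mathbb Z}}p(u(k))\rho^{-k}<+\infty$ for all $p\in\circledast$ is an interval, since $\rho^{-k}\leq\rho_{1}^{-k}+\rho_{2}^{-k}$ whenever $\rho_{1}\leq\rho\leq\rho_{2}$. It contains the open set $\{|z|:z\in\Omega\setminus\{0\}\}$. Hence ${\mathcal A}:=\{z:|z|\in{\rm int}\,I\}$ is a connected open annulus containing $\Omega\setminus\{0\}$. On ${\mathcal A}$, each $\langle x^{*},F_{u}(\cdot)\rangle$ ($x^{*}\in X^{*}$) is a locally uniformly convergent Laurent series, hence holomorphic. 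It vanishes on the nonempty open set $\Omega\setminus\{0\}$, so by the identity theorem it vanishes on all of ${\mathcal A}$. Therefore its Laurent coefficients $\langle x^{*},u(k)\rangle$ vanish, and Hahn--Banach gives $u\equiv 0$. The paper's proof of Theorem \ref{iniq} also passes over this step with ``Hence, $u\equiv 0$''.

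Two further points.

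First, (C2) declares only $A_{1},\dots,A_{s}$ closed. Yet you need $F_{A_{0}u}(z)=A_{0}F_{u}(z)$, in particular $F_{u}(z)\in D(A_{0})$, before injectivity can be invoked. You must add closedness of $A_{0}$, and it cannot be dropped. Take $X=\ell^{2}({\mathbb Z})$, $0<\rho<1$, $s=m_{1}=1$, $a_{1}=c^{0}$, $A_{1}={\rm I}$, $k_{0}=k_{1}=0$, $f\equiv 0$, and $A_{0}={\rm I}-S$ on finitely supported sequences, where $Se_{k}=\rho^{|k+1|-|k|}e_{k+1}$. The operator in the statement reduces to $z{\rm I}-S$ there, which is injective for every $z$. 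Yet $u(k)=\rho^{|k|}e_{k}$ is a nonzero strong solution lying in $uniq_{\Omega}(X;\eqref{qad})$ for $\Omega=\{\rho<|z|<1/\rho\}$.

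Similarly, closedness of $A_{w}$ is what gives $A_{w}(\Delta_{W,a_{w}}u)(k)=(\Delta_{W,a_{w}}(A_{w}u))(k)$. You need this to transform the terms as they actually occur in \eqref{qad}, and you should say so.

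Second, existence does not follow from Theorem \ref{movesvoltw}, whose hypotheses are not available here. It can genuinely fail: take $A_{1}=0$, $A_{0}$ bounded, injective and not surjective, and $Cf(k)\notin R(A_{0})$ for some $k$. Only the uniqueness half is provable, as in the paper.
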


Now we will illustrate Theorem \ref{movesvoltw} and Theorem \ref{iniqvoltw} with the following illustrative example:

\begin{example}\label{parkw}
Suppose that $(X,\| \cdot \|)$ is a complex Banach space, $s=2$, $m_{1}=1$ and $m_{2}=2 $.
\begin{itemize}
\item[(i)] In our concrete situation, we have
\begin{align*}&
\sum_{w=1}^{s}\sum_{j_{w}=0}^{m_{w}}(-1)^{m_{w}-j_{w}}\binom{m_{w}}{j_{w}}z^{k_{w}+j_{w}}F_{a_{w}}(z)A_{w}+z^{k_{0}}A_{0}
\\&= \Bigl( z^{k_{2}}-2z^{k_{2}+1}+z^{k_{2}+2}\Bigr)F_{a_{2}}(z)A_{2}+\Bigl( z^{k_{1}+1}-z^{k_{1}}\Bigr)F_{a_{1}}(z)A_{1}+z^{k_{0}}A_{0},\quad z\in \Omega.
\end{align*}
Let $r>1$ be a sufficiently large real number such that the operator $$W\Biggl[\Bigl( z^{k_{2}}-2z^{k_{2}+1}+z^{k_{2}+2}\Bigr)F_{a_{2}}(z)A_{2}+\Bigl( z^{k_{1}+1}-z^{k_{1}}\Bigr)F_{a_{1}}(z)A_{1}+z^{k_{0}}A_{0}\Biggr]^{-1}C$$
exists in an open neighborhood of the circle $|z|=r$, for any operator $W\in \{{\rm I}, A_{0},A_{1},A_{2}\}.$ If the sequences $(a_{1}(s))_{s\in {\mathbb N}_{0}}$ and $(a_{2}(s))_{s\in {\mathbb N}_{0}}$ are exponentially bounded, and the sequence $(f(k))_{k\in {\mathbb Z}}$ rapidly decays at infinity, then the requirements of Theorem \ref{movesvoltw} can be satisfied with $D'={\mathbb N}_{0}.$ Suppose, for example, that $|a_{1}(s)|+|a_{2}(s)| \leq Me^{\omega s},$ $s\in {\mathbb N}_{0}$ for some real numbers $M\geq 1$ and $\omega \geq 0$, $f(k)=e^{-k^{2}}x$, $k\in {\mathbb Z}$ for some $x\in X,$ and $r>e^{\omega}.$ Then, for $w=1,2$ and for every $k\in {\mathbb Z},$ we have:{\scriptsize 
\begin{align*} & 
\sum_{s=0}^{+\infty}\Bigl| a_{w}(s)\Bigr| \sum_{l\in {\mathbb Z}; k-s-l\in D'}  r^{k-l-s}  \\  &  \times \sup_{|\lambda|=r}  p\Biggl( W\Biggl[\sum_{w=1}^{s}\sum_{j_{w}=0}^{m_{w}}(-1)^{m_{w}-j_{w}}\binom{m_{w}}{j_{w}}\lambda^{k_{w}+j_{w}}F_{a_{w}}(z)A_{w}+\lambda^{k_{0}}A_{0}\Biggr]^{-1}Cf(l)\Biggr) 
\\& =\sum_{s=0}^{+\infty}\Bigl| a_{w}(s)\Bigr| \sum_{l=-\infty}^{k-s}  r^{k-l-s}  \\  &  \times \sup_{|\lambda|=r} 
\Biggl\| W\Biggl[\Bigl( z^{k_{2}}-2z^{k_{2}+1}+z^{k_{2}+2}\Bigr)F_{a_{2}}(z)A_{2}+\Bigl( z^{k_{1}+1}-z^{k_{1}}\Bigr)F_{a_{1}}(z)A_{1}+z^{k_{0}}A_{0}\Biggr]^{-1}C\Biggr\| \cdot \| f(l)\|
\\& 
\leq \sup_{|\lambda|=r} 
\Biggl\| W\Biggl[\Bigl( z^{k_{2}}-2z^{k_{2}+1}+z^{k_{2}+2}\Bigr)F_{a_{2}}(z)A_{2}+\Bigl( z^{k_{1}+1}-z^{k_{1}}\Bigr)F_{a_{1}}(z)A_{1}+z^{k_{0}}A_{0}\Biggr]^{-1}C\Biggr\| \cdot \| x\| \\& \times \sum_{s=0}^{+\infty}\Bigl| a_{w}(s)\Bigr| \sum_{l=-\infty}^{k-s}  r^{k-l-s}e^{-l^{2}} := M_{r}\sum_{s=0}^{+\infty}\Bigl| a_{w}(s)\Bigr| \sum_{l=-\infty}^{k-s}  r^{k-l-s}e^{-l^{2}}.
\end{align*}}After that, we compute
\begin{align*} &
\sum_{s=0}^{+\infty}\Bigl| a_{w}(s)\Bigr| \sum_{l=-\infty}^{k-s}  r^{k-l-s}e^{-l^{2}}
 \leq M\sum_{s=0}^{+\infty}\sum_{v=0}^{+\infty}e^{\omega s}r^{v}e^{-(v+s-k)^{2}}
\\& \leq M\sum_{s=0}^{+\infty}\sum_{v=0}^{+\infty}e^{\omega s}r^{v}e^{-v^{2}}e^{-s^{2}}e^{2vk}e^{2sk}
\leq M \sum_{s=0}^{+\infty}e^{\omega s}e^{-s^{2}}e^{2sk} \cdot \sum_{v=0}^{+\infty}r^{v}e^{-v^{2}}e^{2vk}
<+\infty .
\end{align*}
Therefore, conditions (i)-(iii) in formulation of Theorem \ref{movesvoltw} hold. We can apply the Cauchy theorem in order to see that condition (iv) in formulation of Theorem \ref{movesvoltw} also holds; cf. Example \ref{zan} for more details.
\item[(ii)] Suppose now that $0<\alpha_{1}<1,$ $1<\alpha_{2}<2,$ $a_{1}=c^{m_{1}-\alpha_{1}}=c^{1-\alpha_{1}}$ and $a_{2}=c^{m_{2}-\alpha_{2}}=c^{2-\alpha_{2}}.$ Then the equation \eqref{qad} becomes 
\begin{align}\label{qad1}
A_{2}\Bigl( \Delta_{W}^{\alpha_{2}}u\Bigr)\bigl(k+k_{2}\bigr)+A_{1}\Bigl( \Delta_{W}^{\alpha_{1}}u\Bigr)\bigl(k+k_{1}\bigr)+A_{0} u \bigl(k+k_{0}\bigr)=Cf(k),\quad k\in {\mathbb Z}.
\end{align}
Furthermore, we have
\begin{align*} &
\Bigl( z^{k_{2}}-2z^{k_{2}+1}+z^{k_{2}+2}\Bigr)F_{a_{2}}(z)A_{2}+\Bigl( z^{k_{1}+1}-z^{k_{1}}\Bigr)F_{a_{1}}(z)A_{1}+z^{k_{0}}A_{0}
\\& =\Bigl( z^{k_{2}}-2z^{k_{2}+1}+z^{k_{2}+2}\Bigr)\sum_{j=0}^{+\infty} \frac{\Gamma(j+2-\alpha_{2})}{\Gamma (2-\alpha_{2}) \cdot j!}z^{-j} A_{2}
\\& +\Bigl( z^{k_{1}+1}-z^{k_{1}}\Bigr)\sum_{j=0}^{+\infty} \frac{\Gamma(j+1-\alpha_{2})}{\Gamma (1-\alpha_{1}) \cdot j!}z^{-j} A_{1}+z^{k_{0}}A_{0}
\\& \sim \Bigl( z^{k_{2}}-2z^{k_{2}+1}+z^{k_{2}+2}\Bigr)A_{2}
+\Bigl( z^{k_{1}+1}-z^{k_{1}}\Bigr)z^{-j} A_{1}+z^{k_{0}}A_{0},
\end{align*}
as $|z|\rightarrow +\infty .$ If there exists a sufficiently large real number $r_{0}>1$ such that the operator
$$
\Bigl( z^{k_{2}}-2z^{k_{2}+1}+z^{k_{2}+2}\Bigr)A_{2}
+\Bigl( z^{k_{1}+1}-z^{k_{1}}\Bigr)z^{-j} A_{1}+z^{k_{0}}A_{0}
$$
is injective in an open neighborhood of a point $z_{0}\in {\mathbb C}$ with $|z_{0}|=r_{0},$ then
Theorem \ref{iniqvoltw} can be applied and there exists a unique strong solution of problem
\eqref{qad1} which belongs to the space $uniq_{\Omega}(X;\eqref{qad1}).$  This can occur, for example, if $A_{0}=A_{1}={\rm I},$ $k_{0}=k_{2}+1$ and there exists a complex number $z_{0}\in {\mathbb C}$ such that $|z_{0}|=r_{0} $ and the oprator $z_{0}+A_{2}$ is injective, as a simple calculation shows.
\end{itemize}
\end{example}

\section{Conclusions and final remarks}\label{rema}

In this paper, we have analyzed the multidimensional $Z$-transform of functions with values in sequentially complete locally convex spaces. We have provided many useful remarks, illustrative examples and applications to the abstract Volterra difference equations with multiple variables.

Let us finally emphasize that we can also consider some classes of the abstract Volterra difference equations with convolution product $\ast_{{\rm D}}^{l,j}$. For example, the multidimensional vector-valued $Z$-transform can serve one to provide the basic details about the existence and uniqueness of solutions to the following abstract Volterra difference equation with multiple variables:
\begin{align}\label{djednacina}
\sum_{l=0}^{n}\sum_{j=1}^{a_{l,n}}\sum_{s=1}^{b_{l,j}}A_{j,l,s}\Bigl( a_{j,l,s}\ast^{l,j}_{{\rm D}_{j,l,s}}u\Bigr)\bigl(k_{1},...,k_{n}\bigr)=Cf\bigl(k_{1},...,k_{n}\bigr),\ k=\bigl(k_{1},...,k_{n}\bigr) \in {\mathbb Z}^{n},
\end{align}
where $0\leq l\leq n,$ $C\in L(X)$ is injective, $a_{j,l,s} : D_{j,l,s}'\rightarrow {\mathbb C}$ with $\emptyset \neq D_{j,l,s}'\subseteq {\mathbb Z}^{l}$ for $1\leq l\leq n$, $A_{j,l,s}$ are linear operators on $X$ ($0\leq l\leq n;$ $1\leq j\leq a_{l,n}$; $1\leq s\leq b_{l,j}$), and $f : {\mathbb Z}^{n} \rightarrow X $. The continuous analogues of problem \eqref{djednacina} have recently been analyzed in \cite{mvlt}.

Performing the multidimenional vector-valued $Z$-transform to the both sides of \eqref{djednacina}, we obtain with the help of Theorem \ref{dsplj} that a solution $u :  {\mathbb Z}^{n} \rightarrow X$ of this problem should be looked in the form{\small
\begin{align*}
u(k)=\Biggl(Z^{-1}\Biggl[\Biggl( \sum_{l=0}^{n}\sum_{j=1}^{a_{l,n}}\sum_{s=1}^{b_{l,j}}F_{a_{j,l,s}}\bigl(z_{j_{1}},...,z_{j_{l}} \bigr) A_{j,l,s}\Biggr)^{-1}C\Biggr] \ast_{(D',{\mathbb Z}^{n})}f\Biggr)(k),\ k\in {\mathbb Z}^{n},
\end{align*}}where $Z^{-1}$ denotes the inverse multidimensional $Z$-transform. Further details can be left to interested readers.

\section*{Declarations}

\noindent {\bf Conflicts of Interest:} The author has no competing interests to declare that are relevant to the content of this article.\vspace{0.2cm}

\noindent {\bf Financial Interests:} 
This research is partially supported by grant no. 451-03-68/2020/14/200156 of Ministry
of Science and Technological Development, Republic of Serbia.\vspace{0.2cm}

\noindent {\bf Data Availability:} The data supporting the findings of this research study are available from
the author, upon reasonable request.

\end{document}